\def\FF{\mathbb F}                         
\def\CC{\mathbb C}                         
\def\RR{\mathbb R}                         
\def\KK{\mathbb K}                         
\def\FFq2{{\mathbb F}_{q^2}}
\def\rank{\mathop{\mathrm {rk}}\nolimits}  
\def\diag{\mathop{\mathrm {diag}}\nolimits}  
\def\tr{\mathop{\top}\nolimits}     
\def\N{\mathop{\mathrm{N}}\nolimits}     
\def\hgl{{\mathop{\mathcal{HGL}}\nolimits}_n(\mathbb{F}_{q^2})}
\def\hgldva{{\mathop{\mathcal{HGL}}\nolimits}_2(\mathbb{F}_{q^2})}
\def\h{{\mathop{\mathcal{H}}\nolimits}_n(\mathbb{F}_{q^2})}
\def\hdva{{\mathop{\mathcal{H}}\nolimits}_2(\mathbb{F}_{q^2})}
\def\hglnminus1{{\mathop{\mathcal{HGL}}\nolimits}_{n-1}(\mathbb{F}_{q^2})}
\def\hminus1{{\mathop{\mathcal{H}}\nolimits}_{n-1}(\mathbb{F}_{q^2})}
\def\her{{\mathop{\mathcal{H}}\nolimits}_n(\mathbb{F}_{q^2})}
\newtheorem{thm}{Theorem}[section]
\newtheorem*{main}{Main Theorem}
\newtheorem{lemma}[thm]{Lemma}
\newtheorem{cor}[thm]{Corollary}
\theoremstyle{definition}
\theoremstyle{remark}
\newtheorem{remark}[thm]{Remark}
\numberwithin{equation}{section}
\newcommand{\cal}{\mathcal}
\newcommand{\inv}[1]{{\overline{#1}}}
\newcounter{myenumi}
\newsavebox{\cmm}
\savebox{\cmm}{\indent}
\newenvironment{myenumerate}[1]{
\begin{list}{
{\bf #1~\themyenumi}. } {\labelwidth=0pt
\labelsep=0pt\leftmargin=0pt\usecounter{myenumi}} }{\end{list}}
\savebox{\cmm}{\indent}
\newenvironment{myenumerate2}[1]{
\begin{list}{
{\emph #1~\themyenumi}. } {\labelwidth=0pt
\labelsep=0pt\leftmargin=0pt\usecounter{myenumi}} }{\end{list}}
\begin{document}

\title{Adjacency preservers on invertible hermitian matrices~II}

\author[M.~Orel]{Marko Orel}
\address{University of Primorska, FAMNIT, Glagolja\v{s}ka 8, 6000 Koper, Slovenia}
\address{IMFM, Jadranska 19, 1000 Ljubljana, Slovenia}
\address{University of Primorska, IAM, Muzejski trg 2, 6000 Koper, Slovenia}

\email{marko.orel@upr.si}

\subjclass[2010]{Primary 15A03, 15A33, 15B57, 51B20; Secondary 83A05}
\keywords{Adjacency preserver, Finite field, Hermitian matrix, Minkowski space-time, Special theory of relativity}

\begin{abstract}
Maps that preserve adjacency on the set of all invertible hermitian matrices over a finite field are characterized.
It is shown that such maps form a group that is generated by the maps $A\mapsto PAP^{\ast}$, $A\mapsto A^{\sigma}$, and $A\mapsto A^{-1}$, where $P$ is an invertible matrix, $P^{\ast}$ is its conjugate transpose, and~$\sigma$ is an automorphism of the underlying field. Bijectivity of maps is not an assumption but a conclusion.
Moreover, adjacency is assumed to be preserved in one directions only.

The main result and author's previous result~\cite{FFA} are applied to characterize maps that preserve the `speed of light' on (a) finite Minkowski space-time and (b) the complement of the light cone in finite Minkowski space-time.
\end{abstract}

\maketitle

\section{Introduction}

Two hermitian matrices $A$ and $B$ are adjacent if the rank $\rank(A-B)$ equals one. In previous paper~\cite{prvi_del} the author shows that any map on the set $\hgl$ of all $n\times n$ invertible hermitian matrices over a finite field $\FFq2$, which maps adjacent matrices into adjacent matrices, is necessarily bijective.
In the language of graph theory this means that the graph, with vertex set $\hgl$ and edges defined by the adjacency relation, is a core, so any its endomorphism is an automorphism. In this paper we characterize all such maps.

In the case of all (singular and invertible) hermitian matrices, the characterization of bijective maps $\Phi$ that preserve adjacency in both directions, that is
\begin{equation}\label{ee7}
\rank\big(\Phi(A)-\Phi(B)\big)=1\Longleftrightarrow \rank(A-B)=1,
\end{equation}
is known as Hua's fundamental theorem of geometry of hermitian matrices. This kind of results for various matrix spaces are summarized in the book~\cite{wan}. For the set of all invertible hermitian matrices such a result is not known yet. We derive it for finite field case and then apply~\cite[Theorem~1.1]{prvi_del} to obtain a characterization of all adjacency preservers on $\hgl$. We also obtain two new results related to maps that preserve the `speed of light' on the finite field analog of Minkowski space-time, i.e., the geometrical space of special relativity.

We now state the main results of this paper.

\begin{thm}\label{thm1}
Let $n\geq 2$ be an integer and $q\geq 4$ a power of a prime. A bijective map $\Phi: \hgl\to \hgl$ preserves adjacency in both directions if and only if it is of the form
\begin{equation}\label{ii10}
\Phi(A)=PA^{\sigma}P^{\ast}\qquad \textrm{or}\qquad \Phi(A)=P(A^{\sigma})^{-1}P^{\ast},
\end{equation}
where $P$ is an invertible matrix over $\FFq2$, $P^{\ast}$ is its conjugate transpose, and $\sigma: \FFq2\to\FFq2$ is a field automorphism that is applied entry-wise to $A$.
\end{thm}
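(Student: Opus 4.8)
The plan is to reduce Theorem~\ref{thm1} to the classical Hua fundamental theorem for \emph{all} hermitian matrices (see \cite{wan}) and, in the process, to isolate exactly where the inversion $A\mapsto A^{-1}$ enters. The easy direction is immediate: for $\Phi$ of either form in \eqref{ii10} one has $\rank\big(\Phi(A)-\Phi(B)\big)=\rank(A-B)$, since congruence and an entrywise field automorphism $\sigma$ do not change the rank of a matrix, and since $\rank(A^{-1}-B^{-1})=\rank\big(A^{-1}(B-A)B^{-1}\big)=\rank(A-B)$ on invertible matrices. For the converse, one first normalises: because over $\FFq2$ all nondegenerate hermitian forms of a given size are equivalent, $\Phi(I)=QQ^{\ast}$ for some invertible $Q$, and replacing $\Phi$ by $A\mapsto Q^{-1}\Phi(A)(Q^{-1})^{\ast}$ — still a map of the type under consideration — lets us assume $\Phi(I)=I$.

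Next I would analyse maximal cliques. A rank-one hermitian matrix has the shape $\alpha vv^{\ast}$ with $\alpha\in\FF_q$, and for $q\ge 3$ the maximal cliques of the adjacency graph on $\her$ are precisely the affine lines $\ell_{A,v}=\{A+\alpha vv^{\ast}:\alpha\in\FF_q\}$ (two rank-one hermitian matrices with independent ``directions'' have difference of rank $2$). On such a line $\det(A+\alpha vv^{\ast})=\det(A)\,(1+\alpha\, v^{\ast}A^{-1}v)$ is affine-linear in $\alpha$, so $\ell_{A,v}$ is either contained in $\hgl$ — a \emph{long} clique of size $q$ — or meets the singular locus $\her\setminus\hgl$ in exactly one point — a \emph{short} clique of size $q-1$. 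Since any clique of size $\ge 3$ in $\hgl$ is also a clique in $\her$, it lies on a line, so for $q\ge 4$ (when both types have $\ge 3$ elements) the maximal cliques of $\hgl$ are exactly the long and short ones. As $\Phi$ preserves adjacency in both directions it permutes maximal cliques and, being bijective, preserves their cardinality; hence $\Phi$ respects the family of affine lines of $\her$ meeting $\hgl$, together with the long/short dichotomy.

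The crux is the extension step. The geometry of hermitian matrices has a ``conformal'' completion whose collineation group contains, besides the congruences $A\mapsto PAP^{\ast}$ and the field automorphisms, also the inversion $\iota:A\mapsto A^{-1}$, and $\hgl$ is (the graph on) an explicitly describable piece of that completion, the complement of a ``hermitian variety at infinity''. I expect the main obstacle to be showing that the graph automorphism $\Phi$ extends to a line-preserving bijection of $\her$ \emph{after possibly composing with} $\iota$: concretely, that either $\Phi$ or $\iota\circ\Phi$ carries affine lines of $\her$ to affine lines. This is delicate because one must reconstruct from the induced subgraph on $\hgl$ enough of the ambient incidence structure — the singular points closing up the short cliques and their mutual incidences — and show that the only obstruction to a globally consistent choice of those limit points is precisely the ambiguity resolved by $\iota$ (inversion does \emph{not} send short cliques with a common singular endpoint to short cliques with a common endpoint, so naive extension fails); the hypothesis $q\ge 4$ is used again here in the counting.

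Granting the extension, say $\widetilde\Phi:\her\to\her$ is a line-preserving bijection with $\widetilde\Phi|_{\hgl}=\Phi$ (the case of $\iota\circ\Phi$ being symmetric). Then $\widetilde\Phi$ maps maximal cliques to maximal cliques, so Hua's theorem gives $\widetilde\Phi(A)=PA^{\sigma}P^{\ast}+H_0$ for some invertible $P$, field automorphism $\sigma$, and hermitian $H_0$. Since $\Phi$ is a bijection of $\hgl$, $\widetilde\Phi$ maps the singular locus onto itself, i.e.\ the hypersurface $\{\det=0\}$ is invariant under translation by a fixed hermitian matrix; a short direct argument (testing on diagonal rank-one matrices to kill the diagonal of $H_0$, then on off-diagonal ones and using that the norm form is anisotropic) forces $H_0=0$. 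Hence $\Phi(A)=PA^{\sigma}P^{\ast}$, which after undoing the normalisation is the first form in \eqref{ii10}; in the remaining case $\iota\circ\Phi(A)=PA^{\sigma}P^{\ast}$, so $\Phi(A)=(PA^{\sigma}P^{\ast})^{-1}=(P^{\ast})^{-1}(A^{\sigma})^{-1}P^{-1}$, which is the second form.
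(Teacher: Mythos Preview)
Your overall architecture is the same as the paper's: normalise, decide whether to precompose with the inversion $\iota$, extend to an adjacency preserver on all of $\her$, then invoke Hua/Lemma~\ref{FFA}. The easy direction and the clique analysis are fine. But what you call ``the crux'' and then explicitly \emph{grant} is essentially the entire content of the theorem. You have not proved (a) that there is a genuine dichotomy---either $\Phi$ or $\iota\circ\Phi$ extends---nor (b) that the extension exists. Your proposal is therefore a correct plan, not a proof.

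Concretely, the paper resolves (a) and (b) via the ``flower'' machinery. A \emph{leaf} is your short clique with its singular endpoint removed; the \emph{flower} of a rank-$(n{-}1)$ matrix $N$ is the set of all $q^{2(n-1)}$ leaves with endpoint $N$. The technical heart is Lemma~\ref{lemma7}: if two leaves of the same flower are sent by $\Phi$ to leaves of \emph{different} flowers, then the generating vectors of the image leaves are forced to be parallel and $\Phi(\mathcal L_1)^{-1},\Phi(\mathcal L_2)^{-1}$ land in a common flower. From this and a counting argument on connected components of the rank-$(n{-}1)$ stratum (Lemmas~\ref{lemma11} and~\ref{lemma44}), one gets the global dichotomy: $\Phi$ either preserves \emph{all} flowers or disintegrates \emph{all} of them, and in the second case $\iota\circ\Phi$ preserves all flowers. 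This is exactly the ``only obstruction is $\iota$'' step you flagged but did not supply; it is not a soft incidence argument and genuinely uses $q\ge 4$ inside rank computations such as~\eqref{eq6}--\eqref{eq2}. Once flowers are preserved, the extension to rank $n{-}1$ is forced (send $N$ to the common endpoint of the image flower), and the paper then pushes the extension down through ranks using Lemma~\ref{lemma10} and Lemma~\ref{pomozna2} (your sketch gives no mechanism for ranks $<n{-}1$).

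One more point: your argument for $H_0=0$ via translation-invariance of $\{\det=0\}$ is plausible but unnecessary. The paper's extension is rank-preserving by construction, so $\Psi(0)=0$ comes for free; this is cleaner than arguing about the singular hypersurface.
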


If a map $\Phi$ on $\hgl$ is bijective, then condition~(\ref{ee7}) is equivalent to
\begin{equation}\label{ee6}
\rank\big(\Phi(A)-\Phi(B)\big)=1\Longrightarrow \rank(A-B)=1,
\end{equation}
since the set $\hgl$ is finite. Moreover, any map that satisfies~(\ref{ee6}) for all $A,B\in\hgl$ is automatically bijective by Theorem~1.1 from previous paper~\cite{prvi_del}. Consequently, the above Theorem~\ref{thm1} proves the following result.
\begin{main}
Let $n\geq 2$ be an integer and $q\geq 4$ a power of a prime. A map $\Phi: \hgl\to \hgl$ preserves adjacency if and only if it is of the form
\begin{equation}\label{ee8}
\Phi(A)=PA^{\sigma}P^{\ast}\qquad \textrm{or}\qquad \Phi(A)=P(A^{\sigma})^{-1}P^{\ast}
\end{equation}
for some invertible matrix $P$ and field automorphism $\sigma$.
\end{main}

\begin{remark}
In terms of graph theory, Main Theorem says that endomorphisms of the graph with vertex set $\hgl$ and edge set $\big\{\{A,B\} : \rank(A-B)=1\big\}$ are precisely the maps in~(\ref{ee8}).
\end{remark}

Detailed definitions, notation, and auxiliary results are described in Section~2. In Section~3 we prove Theorem~\ref{thm1}. Section~4 is devoted to Minkowski space-time. Besides the two new results on characterization of maps that preserve the `speed of light' on finite Minkowski space, we write a survey of few related results and describe the connection with special relativity. For a detailed state of art on adjacency preservers in general we refer to the introduction of previous paper~\cite{prvi_del}.

\section{Notation and auxiliary theorems}\label{2}

Let $\inv{\phantom{a}}$ be an \emph{involution} on a finite field, that is, $\inv{x+y}=\inv{x}+\inv{y}$, $\inv{xy}=\inv{y}\cdot \inv{x}$, and $\inv{\inv{x}}=x$. In this paper we assume that the involution is not the identity map, which implies that the finite field has $q^2$ elements, where $q$ is a power of a prime. We denote the finite field by $\FFq2$ and recall that the unique involution is given by the rule $\inv{x}=x^q$. We use Greek letters for elements of the \emph{fixed field} of the involution $\FF:=\{\lambda\in \FFq2\, :\, \inv{\lambda}=\lambda\}$, which has $q$ elements.

Let $n\geq 2$ be an integer. A $n\times n$ matrix $A$ with coefficients in $\FFq2$ is \emph{hermitian}, if $A^{\ast}:=\inv{A}^{\tr}=A$, where the involution $\inv{\phantom{a}}$ is applied entry-wise and $B^\top$ denotes the transpose of $B$. We use $\h$ and $\hgl$ to denote the set of all hermitian and the set of all invertible hermitian matrices respectively.
If the rank $\rank A$ of a hermitian matrix equals $r$, then
\begin{equation}\label{i8}
A=P(E_{11}+E_{22}+\ldots+E_{rr})P^{\ast}
\end{equation}
for some invertible matrix $P$, where $E_{ii}$ denotes the matrix with 1 at  $(i,i)$-th entry and zeros elsewhere~\cite[Theorem~4.1]{bose}. Consequently, $A=\sum_{i=1}^{r} {\bf x}_i{\bf x}_i^{\ast}$, where the column vector ${\bf x}_i\in\FFq2^n$ is the $i$-th column of matrix $P$. Two hermitian matrices $A$ and $B$ are \emph{adjacent} if $\rank(A-B)=1$. In that case, the unique maximal set of pairwise adjacent matrices, containing both $A$ and $B$, equals
\begin{equation}\label{ii1}
\{A+\lambda{\bf x}{\bf x}^{\ast}\ :\ \lambda\in\FF\},
\end{equation}
where ${\bf x}{\bf x}^{\ast}=B-A$~\cite[Corollary~6.9]{wan}. Moreover, $|\rank A - \rank B|\leq 1$.
Given a subset $U\subseteq \h$, a map $\Phi : U\to U$
\emph{preserves adjacency} if condition~(\ref{ee6}) is satisfied for $A,B\in U$. If $\Phi$ obeys the stronger rule~(\ref{ee7}), then $\Phi$ \emph{preserves adjacency in both directions}. If $\Phi$ is bijective, then preserving adjacency in one or both directions is equivalent, since the set $U$ is finite. The following result was proved in~\cite[Theorem~3.1]{FFA}.
\begin{lemma}\label{FFA}
A map $\Phi : \h\to\h$ preserves adjacency if and only if it is of the form $\Phi(A)=PA^{\sigma}P^{\ast}+B$ for some invertible matrix $P$, hermitian matrix~$B$, and field automorphism $\sigma: \FFq2\to\FFq2$ that is applied entry-wise to $A$.
\end{lemma}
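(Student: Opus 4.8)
The plan is to handle the two implications separately. The ``if'' direction is a routine verification: if $\Phi(A)=PA^{\sigma}P^{\ast}+B$ then $\Phi(A)-\Phi(C)=P(A-C)^{\sigma}P^{\ast}$ for all $A,C\in\h$, and since an entry-wise field automorphism preserves the rank of a matrix, as does congruence by the invertible matrix $P$, we obtain $\rank\big(\Phi(A)-\Phi(C)\big)=\rank(A-C)$; in particular $\Phi$ preserves adjacency, and even preserves it in both directions.

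For the converse, the first step is to normalize. Replacing $\Phi$ by the map $A\mapsto\Phi(A)-\Phi(0)$ affects neither the hypothesis~(\ref{ee6}) nor the shape of the conclusion (only the hermitian summand $B$ changes), so we may assume $\Phi(0)=0$. The plan is then: (1)~show that $\Phi$ is bijective; (2)~note that, since $\h$ is finite, a bijective map obeying~(\ref{ee6}) automatically obeys~(\ref{ee7}), i.e.\ it is an automorphism of the graph with vertex set $\h$ and edges given by adjacency; (3)~apply the fundamental theorem of the geometry of hermitian matrices over $\FFq2$ (see~\cite{wan}) to deduce $\Phi(A)=PA^{\sigma}P^{\ast}$ for some invertible $P$ and field automorphism $\sigma$; finally, undo the normalization to recover the summand $B$.

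Step~(1) is the heart of the matter and is where finiteness enters decisively. One works with the maximal cliques of the graph: by~(\ref{ii1}) these are precisely the ``lines'' $\{A+\lambda\mathbf{x}\mathbf{x}^{\ast}:\lambda\in\FF\}$, each of cardinality $q$, and two distinct lines meet in at most one matrix. Since $\Phi$ preserves adjacency, the image of a line consists of pairwise adjacent-or-equal matrices, hence lies inside a single line; thus $\Phi$ carries lines into lines and induces a self-map of the point--line incidence geometry of $\h$. The two things to rule out are that $\Phi$ collapses some $q$-element line onto a shorter clique, and that $\Phi$ compresses all of $\h$ into a single line. Both are excluded by counting arguments on this finite incidence geometry --- balancing the number of points of $\h$, the number of lines, and the number of lines through a fixed point against the way these quantities transform under $\Phi$, and, if needed, invoking a Hoffman-type eigenvalue bound for the independence number of the hermitian forms graph. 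This forces $\Phi$ to be injective, hence bijective; equivalently, it amounts to showing that the adjacency graph on $\h$ is a core, in the spirit of~\cite{prvi_del}.

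The main obstacle is precisely this step~(1): the hypothesis~(\ref{ee6}) is one-directional and bijectivity is not assumed, so everything must be bootstrapped from the purely local condition ``adjacent goes to adjacent'' together with the combinatorics of $\FFq2$ and of the fixed field $\FF$ appearing in~(\ref{ii1}). If one prefers to avoid quoting the fundamental theorem in step~(3), one can instead argue directly: once $\Phi(0)=0$, the matrices adjacent to $0$ are exactly the rank-one hermitian matrices, so $\Phi$ permutes them; following column spaces along the lines through $0$ yields a bijection of $\mathbb{P}^{n-1}(\FFq2)$, which for $n\geq 3$ is semilinear by the fundamental theorem of projective geometry, the case $n=2$ being treated separately; after composing with a congruence and a field automorphism one may assume $\Phi$ fixes every rank-one matrix, and then, using~(\ref{i8}) and the adjacency relations between an arbitrary hermitian matrix and the rank-one matrices ``close'' to it, one shows that $\Phi$ is additive, hence the identity.
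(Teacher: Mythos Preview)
The paper does not prove this lemma at all: the sentence immediately preceding it reads ``The following result was proved in~\cite[Theorem~3.1]{FFA}'', so Lemma~\ref{FFA} is imported wholesale from the earlier paper~\cite{FFA}. Your ``if'' direction and the overall architecture of your converse (normalize to $\Phi(0)=0$; prove bijectivity; upgrade~(\ref{ee6}) to~(\ref{ee7}) by finiteness; invoke the fundamental theorem of geometry of hermitian matrices) match what that reference does.

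The gap is in your Step~(1). You assert that collapsing a line, or collapsing all of $\h$ into a single line, is ``excluded by counting arguments on this finite incidence geometry'' and then add ``if needed, invoking a Hoffman-type eigenvalue bound''. The eigenvalue bound is not optional padding --- it is the entire content of the step. Naive point/line counting does not rule out a proper endomorphism: the graph on $\h$ is vertex-transitive, so a non-injective adjacency preserver would yield a retract, and nothing in the raw incidence numbers (number of vertices, number of $q$-cliques, number of cliques through a point) by itself contradicts the existence of such a retract. In~\cite{FFA} the argument genuinely uses the spectrum of the hermitian forms graph to bound the independence number (Hoffman's ratio bound) and compares this with the size of a colour class / fibre of a putative non-injective endomorphism; only then does a contradiction emerge. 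Your write-up does not carry out this computation, and the hedging ``if needed'' suggests you have not verified that counting alone fails --- it does.

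A second, smaller issue: your alternative route in the final paragraph (bypass the fundamental theorem, use the fundamental theorem of projective geometry on rank-one matrices, then deduce additivity) is a legitimate programme, but the sentence ``using~(\ref{i8}) and the adjacency relations \ldots\ one shows that $\Phi$ is additive'' is again a promissory note rather than a proof; the passage from ``fixes all rank-one matrices'' to ``is the identity'' requires real work (and the case $n=2$, where projective geometry gives nothing, must be done by hand). If you want a self-contained proof rather than a citation, you need to actually execute one of these two arguments.
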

By~(\ref{i8}), any $A\in\h$ of rank $r$ is of the form $A=P(\dot{A}\oplus 0)P^{\ast}$ for some invertible $P$ and $\dot{A}\in \mathcal{HGL}_{r}(\FFq2)$. Lemma~\ref{pomozna} is a modification of~\cite[Lemma~3.1]{lama}.
\begin{lemma}\label{pomozna}
Let $A=P(\dot{A}\oplus 0)P^{\ast}$ with $P$ invertible and $\dot{A}\in \mathcal{HGL}_{r}(\FFq2)$. If ${\bf x}=P(y_1,\ldots,y_n)^{\tr}$, $y_i\neq 0$ for some $i>r$, and $\lambda\neq 0$, then $\rank(A+\lambda {\bf x}{\bf x}^{\ast})=r+1$.
\end{lemma}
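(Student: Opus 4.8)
The plan is to reduce the claim to one block-determinant computation. Setting ${\bf y}=(y_1,\ldots,y_n)^{\tr}$, so that ${\bf x}=P{\bf y}$, I would first note
\[
A+\lambda{\bf x}{\bf x}^{\ast}=P\bigl((\dot{A}\oplus 0)+\lambda{\bf y}{\bf y}^{\ast}\bigr)P^{\ast},
\]
and since $P$ is invertible this matrix has the same rank as $M:=(\dot{A}\oplus 0)+\lambda{\bf y}{\bf y}^{\ast}$. Thus it suffices to show $\rank M=r+1$. The bound $\rank M\le\rank(\dot{A}\oplus 0)+\rank(\lambda{\bf y}{\bf y}^{\ast})\le r+1$ is immediate, so the content is the reverse inequality.

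For that I would exhibit an $(r+1)\times(r+1)$ submatrix of $M$ with nonzero determinant. Fix an index $i>r$ with $y_i\neq 0$, put ${\bf u}:=(y_1,\ldots,y_r)^{\tr}$, and let $N$ be the submatrix of $M$ on the rows and columns indexed by $\{1,\ldots,r\}\cup\{i\}$. Since $\dot{A}\oplus 0$ is supported on its leading $r\times r$ block, the off-diagonal blocks of $N$ come entirely from $\lambda{\bf y}{\bf y}^{\ast}$, which gives
\[
N=\begin{pmatrix}\dot{A}+\lambda{\bf u}{\bf u}^{\ast} & \lambda\inv{y_i}\,{\bf u}\\ \lambda y_i\,{\bf u}^{\ast} & \lambda y_i\inv{y_i}\end{pmatrix}.
\]

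The key observation is that the bottom-right scalar $d:=\lambda y_i\inv{y_i}=\lambda\,\N(y_i)$ is nonzero, because $\lambda\neq 0$ and $y_i\neq 0$. Taking the Schur complement of $N$ with respect to $d$, the two rank-one corrections cancel exactly -- this is where $\inv{y_i}y_i=y_i\inv{y_i}$ enters -- so
\[
\det N=d\cdot\det\bigl(\dot{A}+\lambda{\bf u}{\bf u}^{\ast}-(\lambda\inv{y_i}{\bf u})\,d^{-1}(\lambda y_i{\bf u}^{\ast})\bigr)=d\cdot\det\dot{A}=\lambda\,\N(y_i)\,\det\dot{A}\neq 0,
\]
since $\dot{A}$ is invertible. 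Hence $\rank M\ge\rank N=r+1$, and together with the upper bound this yields $\rank M=r+1$, proving the lemma. I do not expect a genuine obstacle: the only delicate points are pinning down the off-diagonal blocks of $N$ correctly (which hinges on $\dot{A}\oplus 0$ vanishing outside its leading block, hence on $i>r$) and the exact cancellation of the $\lambda{\bf u}{\bf u}^{\ast}$ terms in the Schur complement.
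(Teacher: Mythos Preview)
Your proof is correct. The paper's argument is a terser row-reduction version of the same idea: after reducing to $P=I$, it notes that the $i$-th row of $\lambda{\bf y}{\bf y}^{\ast}$ (nonzero since $y_i\neq 0$) can be used to annihilate all other rows of $\lambda{\bf y}{\bf y}^{\ast}$; applying the same row operations to $(\dot{A}\oplus 0)+\lambda{\bf y}{\bf y}^{\ast}$ leaves the $\dot{A}$ block intact (because the $i$-th row of $\dot{A}\oplus 0$ is zero when $i>r$) together with the surviving nonzero $i$-th row, giving rank $r+1$ directly. Your Schur-complement computation is the determinant-level incarnation of exactly this elimination step and has the bonus of yielding the explicit formula $\det N=\lambda\,y_i\inv{y_i}\det\dot{A}$; the paper's version trades that explicitness for brevity.
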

\begin{proof}
We may assume that $P=I$ is the identity matrix. Since $\rank (\lambda {\bf x}{\bf x}^{\ast})=1$, a row reduction performed with $i$-th row annihilates all other rows. The same row reduction  performed on $A+\lambda {\bf x}{\bf x}^{\ast}$ proves the claim.
\end{proof}
\begin{cor}\label{nova}
Let $1\leq r\leq n$ and
$A=P(\dot{A}\oplus 0)P^{\ast}$, where $P$ is invertible and $\dot{A}\in \mathcal{HGL}_{r}(\FFq2)$. If
${\bf x}=P(y_1,\ldots,y_r,0\ldots,0)^{\tr}$,  then $\rank(A+\lambda {\bf x}{\bf x}^{\ast})\in \{r-1, r\}$ for all $\lambda\in\FF$ and $\rank(A+\lambda {\bf x}{\bf x}^{\ast})=r-1$ for at most one scalar $\lambda$.
\end{cor}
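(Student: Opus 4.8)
The plan is to reduce to the block-diagonal case and then study the determinant of the relevant $r\times r$ block as a polynomial in $\lambda$.

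First I would conjugate by $P^{-1}$: this changes neither $\rank(A+\lambda{\bf x}{\bf x}^{\ast})$ nor the shape of the hypotheses, so we may assume $P=I$, i.e.\ $A=\dot A\oplus 0$ and ${\bf x}=(y_1,\dots,y_r,0,\dots,0)^{\tr}$. Writing ${\bf y}=(y_1,\dots,y_r)^{\tr}\in\FFq2^{\,r}$, one gets ${\bf x}{\bf x}^{\ast}=({\bf y}{\bf y}^{\ast})\oplus 0$ and hence $A+\lambda{\bf x}{\bf x}^{\ast}=(\dot A+\lambda{\bf y}{\bf y}^{\ast})\oplus 0$, so that $\rank(A+\lambda{\bf x}{\bf x}^{\ast})=\rank(\dot A+\lambda{\bf y}{\bf y}^{\ast})$.

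Next I would pin down the range of this rank. Since $\dot A+\lambda{\bf y}{\bf y}^{\ast}$ is an $r\times r$ matrix, its rank is at most $r$. For the lower bound: if $\lambda=0$ or ${\bf y}={\bf 0}$ the perturbation is zero and the rank is exactly $r$; otherwise $\dot A$ and $\dot A+\lambda{\bf y}{\bf y}^{\ast}$ are adjacent (their difference has rank one), so by the inequality $|\rank A-\rank B|\le 1$ recalled in Section~\ref{2} the rank is at least $r-1$. Hence $\rank(A+\lambda{\bf x}{\bf x}^{\ast})\in\{r-1,r\}$.

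Finally, for the uniqueness statement I would look at $f(\lambda):=\det(\dot A+\lambda{\bf y}{\bf y}^{\ast})$. Because every column of $\lambda{\bf y}{\bf y}^{\ast}$ is a scalar multiple of ${\bf y}$, multilinearity of the determinant (equivalently the matrix determinant lemma $f(\lambda)=\det\dot A\,(1+\lambda\,{\bf y}^{\ast}\dot A^{-1}{\bf y})$, valid since $\dot A$ is invertible) shows that $f$ is an affine polynomial in $\lambda$ with nonzero constant term $f(0)=\det\dot A$. Such a polynomial has at most one root in $\FFq2$, hence at most one root in $\FF$; and $\dot A+\lambda{\bf y}{\bf y}^{\ast}$ has rank $r-1$ (rather than $r$) exactly when $f(\lambda)=0$. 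This yields the claim. The argument is routine; the only point needing a moment's care is that $f$ is genuinely affine and not identically zero, which is precisely where invertibility of $\dot A$ is used.
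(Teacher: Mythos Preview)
Your proof is correct. The first part matches the paper (which simply calls it ``obvious''), but for the uniqueness of the rank-drop value of $\lambda$ you take a genuinely different route.

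The paper argues via Lemma~\ref{pomozna}: it supposes $B:=A+\lambda_0{\bf x}{\bf x}^{\ast}$ has rank $r-1$, writes $B=Q(\dot B\oplus 0)Q^{\ast}$ with $\dot B\in\mathcal{HGL}_{r-1}(\FFq2)$, observes that $Q^{-1}{\bf x}$ must have a nonzero entry beyond position $r-1$ (since $A=B-\lambda_0{\bf x}{\bf x}^{\ast}$ has rank $r$), and then invokes Lemma~\ref{pomozna} to conclude that $B+\nu{\bf x}{\bf x}^{\ast}$ has rank $r$ for every nonzero $\nu$. This stays entirely within the rank-perturbation framework the paper has already set up and avoids determinants.

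Your argument instead reduces to the $r\times r$ block and reads off the uniqueness from the matrix determinant lemma $\det(\dot A+\lambda{\bf y}{\bf y}^{\ast})=\det\dot A\,(1+\lambda\,{\bf y}^{\ast}\dot A^{-1}{\bf y})$, an affine function of $\lambda$ with nonzero constant term. This is shorter and more self-contained, at the cost of importing an identity the paper does not otherwise use; the paper's version has the advantage of recycling Lemma~\ref{pomozna} and fitting the coordinate-free style used throughout. Either approach is perfectly adequate here.
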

\begin{proof}
It is obvious that $\rank(A+\lambda {\bf x}{\bf x}^{\ast})\in \{r-1, r\}$. Assume that $\rank B=r-1$, where $B:=A+\lambda_0 {\bf x}{\bf x}^{\ast}$. We need to show that for $\lambda\neq \lambda_0$ the corresponding matrix is of rank $r$. We may assume that $r\geq 2$. In that case $B=Q(\dot{B}\oplus 0)Q^{\ast}$ for some invertible $Q$ and $\dot{B}\in \mathcal{HGL}_{r-1}(\FFq2)$. Let $Q^{-1}{\bf x}=:{\bf z}=(z_1,\ldots,z_n)^{\tr}$. Since $A=B-\lambda_0 {\bf x}{\bf x}^{\ast}$ is of rank $r$, it follows that $z_{i}\neq 0$ for some $i> r-1$, so Lemma~\ref{pomozna} shows that $\rank(A+(\lambda_0-\lambda){\bf x}{\bf x}^{\ast})=\rank(B-\lambda{\bf x}{\bf x}^{\ast})=r$ for all nonzero $\lambda$.
\end{proof}
If $A$ and ${\bf x}$ are as in Lemma~\ref{pomozna}, then the set ${\cal L}:=\{A+\lambda{\bf x}{\bf x}^{\ast}\ :\ 0\neq\lambda\in\FF\}$, that consist of $q-1$ pairwise adjacent matrices of rank $r+1$, is a \emph{leaf} of $A$. The set of all leaves of $A$ is a \emph{flower} of $A$ (cf. Figure~1). If $\rank A =n-1$, then it can be easily deduced from Lemma~\ref{pomozna}, that its flower consists of $q^{2(n-1)}$ leaves.
\begin{lemma}\label{list_v_list}
Let $q\geq 3$, $1\leq r\leq n$, and assume $U\subseteq \h$ is a set of all matrices of rank $r$. If ${\cal L}\subseteq U$ is a leaf of some hermitian matrix of rank $r-1$ and  a bijective map $\Phi : U\to U$ preserves adjacency in both directions, then $\Phi({\cal L})$ is a leaf of some unique hermitian matrix of rank $r-1$.
\end{lemma}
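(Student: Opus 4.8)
The plan is to characterize the leaves contained in $U$ by a purely graph-theoretic property and then invoke the fact that $\Phi$ is an automorphism of the graph on $U$. First I would prove the equivalence: \emph{a subset ${\cal L}\subseteq U$ is a leaf of some hermitian matrix of rank $r-1$ if and only if ${\cal L}$ is a maximal clique, with respect to the adjacency relation, of the graph on the vertex set $U$, and $|{\cal L}|=q-1$; moreover in that case the rank-$(r-1)$ matrix is unique.} Granting this, the lemma is immediate: a bijective map $\Phi:U\to U$ that preserves adjacency in both directions is an automorphism of this graph, hence it carries maximal cliques bijectively onto maximal cliques and preserves cardinalities; so $\Phi({\cal L})$ is again a maximal clique of $U$ of size $q-1$, i.e.\ a leaf of a unique hermitian matrix of rank $r-1$.

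To prove the forward implication of the equivalence I would argue as follows. By~(\ref{ii1}) any two adjacent hermitian matrices lie in a unique maximal clique of $\h$, which has the form $K=\{C+\lambda{\bf x}{\bf x}^{\ast}:\lambda\in\FF\}$ and consists of exactly $q$ elements; consequently the maximal cliques of the graph on $U$ with at least two vertices are precisely the intersections $K\cap U$ for $K$ a maximal clique of $\h$ with $|K\cap U|\geq 2$. Fix such a $K$, put $s=\min\{\rank M:M\in K\}$, pick $C\in K$ of rank $s$, and write $C=P(\dot C\oplus 0)P^{\ast}$ with $\dot C\in\mathcal{HGL}_{s}(\FFq2)$ and ${\bf x}=P(z_1,\dots,z_n)^{\tr}$ (when $s=0$ necessarily ${\bf x}\neq 0$ and one falls in the second case below). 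If $z_i=0$ for all $i>s$, then Corollary~\ref{nova} together with the minimality of $s$ forces $\rank(C+\lambda{\bf x}{\bf x}^{\ast})=s$ for every $\lambda\in\FF$, so all $q$ elements of $K$ have rank $s$ and $|K\cap U|\in\{0,q\}$. Otherwise $z_i\neq 0$ for some $i>s$, and Lemma~\ref{pomozna} gives $\rank(C+\lambda{\bf x}{\bf x}^{\ast})=s+1$ for every $\lambda\neq 0$, so $K$ has one matrix of rank $s$ and $q-1$ matrices of rank $s+1$, whence $|K\cap U|\in\{0,1,q-1\}$, the value $q-1$ occurring exactly when $r=s+1$. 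Altogether $|K\cap U|\in\{0,1,q-1,q\}$, and since $q\geq 3$ the value $q-1$ is attained only in the second case with $r=s+1$; in that situation $K\cap U=\{C+\lambda{\bf x}{\bf x}^{\ast}:\lambda\neq 0\}$ is, by definition, a leaf of the rank-$(r-1)$ matrix $C$.

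Conversely, if ${\cal L}$ is a leaf of a hermitian matrix $A$ of rank $r-1$, then ${\cal L}=\{A+\lambda{\bf x}{\bf x}^{\ast}:\lambda\neq 0\}\subseteq U$ and ${\cal L}\cup\{A\}=\{A+\lambda{\bf x}{\bf x}^{\ast}:\lambda\in\FF\}$ is a maximal clique of $\h$ of the form in~(\ref{ii1}), so ${\cal L}=({\cal L}\cup\{A\})\cap U$ is a maximal clique of the graph on $U$ with $|{\cal L}|=q-1\geq 2$. Finally, any two distinct elements of ${\cal L}$ (which exist because $q\geq 3$) determine a unique maximal clique of $\h$, namely ${\cal L}\cup\{A\}$, and $A$ is the unique member of it whose rank is not $r$; hence $A$ is recovered from ${\cal L}$ alone, which gives the uniqueness. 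The main technical point I expect is the bookkeeping in the case analysis of $|K\cap U|$: checking that $q\geq 3$ is exactly what separates the cardinality $q-1$ from the degenerate values $1$ and $q$, and aligning the hypothesis of Lemma~\ref{pomozna} (a rank jump caused by a direction leaving the column space) with the definition of a leaf.
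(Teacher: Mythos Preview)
Your proof is correct. The underlying idea is the same as the paper's---both rely on the fact that any two adjacent matrices in $\h$ sit in a unique maximal clique of the form~(\ref{ii1}) with exactly $q$ elements---but the organisation differs. You first prove a complete graph-theoretic characterisation, namely that the leaves of rank-$(r-1)$ matrices contained in $U$ are precisely the maximal cliques of $U$ of size $q-1$, and then conclude in one line because a graph automorphism preserves this property. To get the characterisation you carry out a full case analysis of $|K\cap U|$ for every maximal clique $K$ of $\h$, using Lemma~\ref{pomozna} and Corollary~\ref{nova} to show $|K\cap U|\in\{0,1,q-1,q\}$.

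The paper's argument is shorter and more ad hoc: it looks directly at $\Phi({\cal L})$, observes that this clique of size $q-1\geq 2$ extends uniquely in $\h$ by one more matrix $B$ with $\rank B\in\{r-1,r\}$, and rules out $\rank B=r$ by pulling $B$ back through $\Phi^{-1}$ and noting that the preimage would have to be the rank-$(r-1)$ matrix $A$ (the only element of the maximal clique through ${\cal L}$ not already in ${\cal L}$), which is impossible since $A\notin U$. Your route buys a reusable structural statement and makes the role of $q\geq 3$ more transparent (it separates $q-1$ from $0$, $1$, and $q$); the paper's route avoids the full classification and is about half the length.
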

\begin{proof}
Since $\Phi$ preserves adjacency, $\Phi({\cal L})$ contains $q-1\geq 2$ pairwise adjacent matrices of rank $r$. By~(\ref{ii1}), the maximal set of pairwise adjacent matrices that contains these $q-1$ matrices is unique and it contains an additional matrix $B\in\h$ with $\rank B\in \{r-1,r\}$. If $B\in U$, then $B=\Phi(C)$ for some $C\in U$. Since $\Phi$ preserves adjacency in both directions, $C$ is adjacent to all matrices in ${\cal L}$, a contradiction since ${\cal L}$ is a leaf. Hence, $\rank B=r-1$ and  $\Phi({\cal L})$ is its leaf.
\end{proof}

Since $\rank(A^{-1}-B^{-1})=\rank(B^{-1}(B-A)A^{-1})$ for invertible matrices, we deduce that the bijective map $A\mapsto A^{-1}$ preserves adjacency in both directions on $\hgl$. Consequently, if $q\geq 3$, $\rank A =n-1$, and ${\cal L}=\{A+\lambda{\bf x}{\bf x}^{\ast}\ :\ 0\neq\lambda\in\FF\}$ is a leaf of $A$, then Lemma~\ref{list_v_list} shows that ${\cal L}^{-1}:=\{(A+\lambda{\bf x}{\bf x}^{\ast})^{-1}\ :\ 0\neq\lambda\in\FF\}$ is a leaf of unique matrix of rank $n-1$, which is described in Lemma~\ref{lemma6}.

Few notions from graph theory are needed. All graphs in this paper are finite, undirected, and without loops and multiple edges. We use $V(\Gamma)$ and $E(\Gamma)$ to denote the vertex set and the edge set of graph $\Gamma$ respectively. A graph $\Gamma'$ is a \emph{subgraph} of graph $\Gamma$ if $V(\Gamma')\subseteq V(\Gamma)$ and $E(\Gamma')\subseteq E(\Gamma)$. A subgraph $\Gamma'$ is \emph{induced} by the set $U\subseteq V(\Gamma)$ if $U=V(\Gamma')$ and $E(\Gamma')=\{\{u,v\}\in E(\Gamma)\ :\ u,v\in U\}$.
With a slight abuse of notation, we denote the graph with vertex set $\h$ and edge set $\{\{A,B\}\ :\ A,B\in \h, \rank(A-B)=1\}$ still by $\h$. Similarly, we use $\hgl$ also to denote the subgraph in $\h$, which is induced by the set $\hgl$.
\begin{figure}[h!]
\centering
\includegraphics[width=0.5\textwidth]{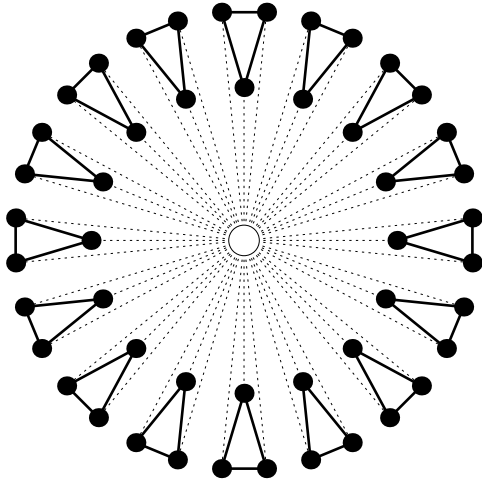}
\caption{A flower of a rank-one matrix in ${\cal HGL}_2(\FF_{4^2})$. Black vertices are invertible matrices, the white vertex is a rank-one matrix. Thick edges are in ${\cal HGL}_2(\FF_{4^2})$, dotted edges are in  ${\cal H}_2(\FF_{4^2})$.}
\end{figure}
For $q\geq 4$ the next lemma is proved in~\cite{prvi_del} as a corollary of a more complicated result with long proof. For the sake of completeness we present a short proof which works for $q=3$ as well.
\begin{lemma}\label{lemma_povezan}
Let $q\geq 3$. The graph $\hgl$ is connected.
\end{lemma}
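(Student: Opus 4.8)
The plan is to prove connectedness by exhibiting, for any two invertible hermitian matrices $A, B \in \hgl$, a path in the graph $\hgl$ joining them. Since the relation ``there is a path in $\hgl$ between $A$ and $B$'' is an equivalence relation, it suffices to show that every $A \in \hgl$ is connected to a single fixed matrix, for which the identity $I$ is the natural choice.

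First I would reduce to a normal form. By~(\ref{i8}) applied with $r = n$, every $A \in \hgl$ can be written $A = PP^{\ast}$ for some invertible $P$, and more usefully one can diagonalize: every invertible hermitian matrix is congruent to a diagonal matrix $\diag(\lambda_1, \ldots, \lambda_n)$ with $\lambda_i \in \FF^{\ast}$. The congruence $A \mapsto QAQ^{\ast}$ is realized by a sequence of elementary congruences, each of which adds a rank-one hermitian matrix $\mathbf{x}\mathbf{x}^{\ast}$ (or $\lambda\mathbf{x}\mathbf{x}^{\ast}$) and hence is a single edge of $\h$ — but I must be careful that all intermediate matrices stay \emph{invertible}. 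This is where Corollary~\ref{nova} enters: when $\rank A = n$ and $\mathbf{x}$ has a nonzero coordinate only ``inside'' the support (which is everything when $r=n$), the rank of $A + \lambda\mathbf{x}\mathbf{x}^{\ast}$ stays $n$ for all but at most one $\lambda \in \FF$; since $q \geq 3$ gives $|\FF| \geq 3$, there are at least two admissible nonzero scalars, so we can always slide along a leaf-like path while remaining in $\hgl$. Thus moving between two congruent invertible hermitian matrices can be done within $\hgl$.

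Next I would connect the diagonal matrices $\diag(\lambda_1, \ldots, \lambda_n)$ among themselves and to $I$. Changing one diagonal entry $\lambda_i$ to $\mu_i$ amounts to adding $(\mu_i - \lambda_i) E_{ii} = (\mu_i-\lambda_i)\mathbf{e}_i\mathbf{e}_i^{\ast}$, a rank-one hermitian matrix, so $\diag(\ldots, \lambda_i, \ldots)$ and $\diag(\ldots, \mu_i, \ldots)$ are adjacent in $\h$ provided $\mu_i \neq \lambda_i$; and they both lie in $\hgl$ precisely when $\lambda_i \neq 0$ and $\mu_i \neq 0$. Since $|\FF^{\ast}| = q - 1 \geq 2$, I can change each diagonal entry to $1$ one at a time, at each stage picking an intermediate nonzero value if necessary (only needed when $q = 3$ and I want to avoid a collision, but with $|\FF^{\ast}|\ge 2$ a direct edge $\lambda_i \to 1$ already works whenever $\lambda_i \neq 1$). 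Composing these at most $n$ edges produces a path in $\hgl$ from $\diag(\lambda_1,\ldots,\lambda_n)$ to $I$. Concatenating with the congruence path from the first paragraph shows every $A\in\hgl$ is connected to $I$, proving the graph is connected.

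The main obstacle is purely bookkeeping: ensuring that every intermediate matrix along each elementary step is invertible, i.e. never accidentally drops rank. The diagonal-adjustment steps are transparent since the matrix stays diagonal with explicitly nonzero entries. The delicate part is the congruence reduction, where one adds rank-one matrices $\lambda\mathbf{x}\mathbf{x}^{\ast}$ that are not diagonal; here Corollary~\ref{nova} with $r = n$ guarantees the rank stays $n$ for at least $|\FF^{\ast}| - 1 \geq 1$ nonzero choices of $\lambda$, so a bad scalar can always be dodged (and the needed edge endpoints both lie in $\hgl$). Assembling these observations into an induction on $n$, or simply on the number of elementary congruences, completes the argument; no long computation is required.
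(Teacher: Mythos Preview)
Your reduction to diagonal matrices (the second paragraph) is fine, but the first step has a genuine gap. The assertion that an elementary congruence $A\mapsto EAE^{\ast}$ ``adds a rank-one hermitian matrix'' is false. For $E=I+c\,\mathbf{e}_i\mathbf{e}_j^{\ast}$ with $i\neq j$ one computes
\[
EAE^{\ast}-A \;=\; c\,\mathbf{e}_i\mathbf{a}_j^{\ast}+\inv{c}\,\mathbf{a}_j\mathbf{e}_i^{\ast}+c\inv{c}\,a_{jj}\,\mathbf{e}_i\mathbf{e}_i^{\ast},
\]
where $\mathbf{a}_j$ is the $j$-th column of $A$; this has rank $2$ whenever $\mathbf{a}_j$ is not a scalar multiple of $\mathbf{e}_i$, which is the generic situation. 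A scaling congruence $A\mapsto D_i(c)AD_i(c)^{\ast}$ likewise changes the whole $i$-th row and column and is again rank $\leq 2$, not $1$. So each ``elementary'' step is two edges in $\h$, not one, and you must exhibit an invertible intermediate matrix.

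Your proposed remedy via Corollary~\ref{nova} does not close this gap as stated. That corollary lets you replace the coefficient $1$ in $\mathbf{y}\mathbf{y}^{\ast}$ by some other $\lambda\in\FF^{\ast}$ to keep invertibility, but the decomposition $EAE^{\ast}-A=\mathbf{y}_1\mathbf{y}_1^{\ast}+\mathbf{y}_2\mathbf{y}_2^{\ast}$ requires you to add precisely $\mathbf{y}_1\mathbf{y}_1^{\ast}$ (coefficient $1$) to reach $EAE^{\ast}$ after the second step; dodging to another $\lambda$ takes you off the congruence path, and you give no mechanism for returning. The paper avoids this trap entirely: it writes $I_n=\sum_i\mathbf{e}_i\mathbf{e}_i^{\ast}$ and $A=\sum_i\mathbf{x}_i\mathbf{x}_i^{\ast}$, and builds an explicit chain that swaps one $\mathbf{e}_i$ for one $\mathbf{x}_j$ at a time, using Corollary~\ref{nova} to pick a safe scalar, then a basis-exchange step to drop the redundant $\mathbf{e}_i$, then a further edge to reset the scalar to $1$. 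Each of these moves is genuinely rank one and the invertibility of every intermediate matrix is transparent from the basis property. Your sketch would need a comparable construction to be complete.
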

\begin{proof}
Let $A=\sum_{i=1}^n {\bf x}_i{\bf x}_i^{\ast}$ be any matrix in $\hgl$ that is distinct from the identity matrix $I_n=\sum_{i=1}^n {\bf e}_i{\bf e}_i^{\ast}$. Here ${\bf e}_i$ is the $i$-th member of the standard basis.
We will construct a chain $I_n=A_0,A_1,A_2,\ldots,A_m=A$ made of matrices in $\hgl$ that connects  $I_n$ with $A$, that is, $\rank(A_i-A_{i+1})\leq 1$ for all $i$.
By Corollary~\ref{nova} and $q\geq 3$ there exists $\lambda_1\in \FF\backslash\{0\}$ such that $A_1:=A_0+\lambda_1{\bf x}_1{\bf x}_1^{\ast}$ is invertible. Choose $n-1$ vectors ${\bf e}_{i_1},\ldots,{\bf e}_{i_{n-1}}$ from ${\bf e}_1,\ldots,{\bf e}_n$ such that ${\bf x}_1,{\bf e}_{i_1},\ldots,{\bf e}_{i_{n-1}}$ form a basis of $\FF_{q^2}^{n}$. Then $A_2:=\sum_{k=1}^{n-1}{\bf e}_{i_k}{\bf e}_{i_k}^{\ast}+\lambda_1{\bf x}_1{\bf x}_1^{\ast}$ and $A_3:=\sum_{k=1}^{n-1}{\bf e}_{i_k}{\bf e}_{i_k}^{\ast}+{\bf x}_1{\bf x}_1^{\ast}$ are invertible.
We now repeat the procedure. By Corollary~\ref{nova} there is $\lambda_2\in \FF\backslash\{0\}$ such that $A_4:=A_3+\lambda_2{\bf x}_2{\bf x}_2^{\ast}$ is invertible. Choose $n-2$ vectors ${\bf e}_{j_1},\ldots,{\bf e}_{j_{n-2}}$ from ${\bf e}_{i_1},\ldots,{\bf e}_{i_{n-1}}$  such that ${\bf x}_1,{\bf x}_2,{\bf e}_{j_1},\ldots,{\bf e}_{j_{n-2}}$ form a basis of $\FF_{q^2}^{n}$. Then $A_5:=\sum_{k=1}^{n-2}{\bf e}_{j_k}{\bf e}_{j_k}^{\ast}+{\bf x}_1{\bf x}_1^{\ast}+\lambda_2{\bf x}_2{\bf x}_2^{\ast}$ and $A_6:=\sum_{k=1}^{n-2}{\bf e}_{j_k}{\bf e}_{j_k}^{\ast}+{\bf x}_1{\bf x}_1^{\ast}+{\bf x}_2{\bf x}_2^{\ast}$ are invertible. By repeating the procedure we obtain the desired chain.
\end{proof}
The distance in $\h$ between $A$ and $B$ is given by $\rank(A-B)$~\cite[Proposition~6.5]{wan}. In this paper $d$ denotes the distance in $\hgl$. Obviously, $d(A,B)\geq \rank (A-B)$ for all $A,B\in\hgl$. Any bijective map $\Phi: \hgl\to\hgl$ that preserves adjacency in both directions is an automorphism of the corresponding graph, so $d\big(\Phi(A),\Phi(B)\big)=d(A,B)$ for all $A,B\in \hgl$.

\section{Proofs}

Our first three results essentially describe the action of the map $A\mapsto A^{-1}$ on a leaf/flower made of invertible matrices.

\begin{lemma}\label{lemma6}
\begin{enumerate}
\item
Let ${\cal L}=\{A+\lambda{\bf x}{\bf x}^{\ast}\ :\ 0\neq\lambda\in\FF\}$ be a leaf of a matrix~(\ref{i8}) with $r=n-1$, and denote ${\bf x}=P(y_1,\ldots,y_n)^{\tr}$ with $y_n\neq 0$. Then
\begin{equation}\label{eq5}
(A+\lambda {\bf x}{\bf x}^{\ast})^{-1}=
(P^{-1})^{\ast}\begin{bmatrix}
I_{n-1}& {\bf z}\\
{\bf z}^{\ast}& {\bf z}^{\ast}{\bf z}+(\lambda y_n\inv{y}_n)^{-1}
\end{bmatrix}P^{-1}
\end{equation}
for all  $\lambda\in \FF\backslash\{0\}$, where ${\bf z}:=\big(\frac{-y_1}{y_n},\ldots,\frac{-y_{n-1}}{ y_n}\big)^{\tr}$. If $q\geq 3$, ${\cal L}^{-1}$ is a leaf of matrix $(P^{-1})^{\ast}\left[\begin{smallmatrix}
I_{n-1}& {\bf z}\\
{\bf z}^{\ast}& {\bf z}^{\ast}{\bf z}
\end{smallmatrix}\right]P^{-1}$, which is generated by vector $(P^{-1})^{\ast}{\bf e}_n$. Here ${\bf e}_n:=(0,\ldots,0,1)^{\tr}$ and
$I_{n-1}$ is the $(n-1)\times (n-1)$ identity matrix.

\item Let $D\in\hglnminus1$ and ${\bf w}\in\FFq2^{n-1}$. Then
$\left[\begin{smallmatrix}
\mu & {\bf w}^{\ast}\\
{\bf w} & D
\end{smallmatrix}\right]$
is invertible if and only if $\mu\neq {\bf w}^{\ast} D^{-1}{\bf w}$, in which case
    \begin{equation}\label{ii2}
    \begin{bmatrix}
\mu & {\bf w}^{\ast}\\
{\bf w} & D
\end{bmatrix}^{-1}=\begin{bmatrix}
0&0\\
0&D^{-1}
\end{bmatrix}+\frac{1}{\mu-{\bf w}^{\ast}D^{-1}{\bf w}}
\begin{bmatrix}
-1\\
D^{-1}{\bf w}
\end{bmatrix}
\begin{bmatrix}
-1\\
D^{-1}{\bf w}
\end{bmatrix}^{\ast}.
\end{equation}
\end{enumerate}
\end{lemma}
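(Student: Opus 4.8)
I would prove part~(2) first; it is a self-contained Schur-complement computation, and the same idea underlies part~(1).

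\emph{Part (2).} Since $D$ is invertible, one has
\[
\begin{bmatrix}\mu&{\bf w}^{\ast}\\{\bf w}&D\end{bmatrix}
=\begin{bmatrix}1&{\bf w}^{\ast}D^{-1}\\0&I\end{bmatrix}
\begin{bmatrix}\mu-{\bf w}^{\ast}D^{-1}{\bf w}&0\\0&D\end{bmatrix}
\begin{bmatrix}1&0\\D^{-1}{\bf w}&I\end{bmatrix},
\]
which is checked by multiplying out. The two outer factors are invertible, so the left-hand matrix is invertible if and only if the middle factor is, that is, if and only if $\mu\neq{\bf w}^{\ast}D^{-1}{\bf w}$. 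In that case one inverts the three factors (for the unipotent outer ones simply flip the sign of the off-diagonal block) and multiplies them back in the reverse order; collecting the rank-one part into an outer product gives exactly~(\ref{ii2}). This step is routine.

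\emph{Part (1), the formula.} First strip off $P$: with ${\bf y}:=(y_1,\dots,y_n)^{\tr}$ we have $A+\lambda{\bf x}{\bf x}^{\ast}=P\big((I_{n-1}\oplus 0)+\lambda{\bf y}{\bf y}^{\ast}\big)P^{\ast}$, which is invertible by Lemma~\ref{pomozna} because $y_n\neq0$; hence $(A+\lambda{\bf x}{\bf x}^{\ast})^{-1}=(P^{-1})^{\ast}\big((I_{n-1}\oplus 0)+\lambda{\bf y}{\bf y}^{\ast}\big)^{-1}P^{-1}$. It therefore suffices to check that $\big((I_{n-1}\oplus 0)+\lambda{\bf y}{\bf y}^{\ast}\big)^{-1}$ equals the bracketed hermitian matrix in~(\ref{eq5}). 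Writing ${\bf u}:=(y_1,\dots,y_{n-1})^{\tr}$, one has ${\bf u}=-y_n{\bf z}$, hence ${\bf u}^{\ast}=-\inv{y}_n{\bf z}^{\ast}$; substituting these relations, a direct block-by-block multiplication of $(I_{n-1}\oplus 0)+\lambda{\bf y}{\bf y}^{\ast}$ by the candidate inverse collapses to $I_n$, the decisive cancellations being ${\bf u}^{\ast}{\bf z}=-\inv{y}_n\,{\bf z}^{\ast}{\bf z}$ and $\lambda\inv{y}_n(\lambda y_n\inv{y}_n)^{-1}=y_n^{-1}$. (Alternatively, one could deduce~(\ref{eq5}) from part~(2) after permuting the $(n,n)$ entry to the front, but the direct check is shorter.)

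\emph{Part (1), the leaf.} Put $\mu:=(\lambda y_n\inv{y}_n)^{-1}$. Since $y_n\inv{y}_n\in\FF\backslash\{0\}$, the assignment $\lambda\mapsto\mu$ is a bijection of $\FF\backslash\{0\}$ onto itself, so~(\ref{eq5}) rewrites ${\cal L}^{-1}$ as $\{B_0+\mu\,{\bf v}{\bf v}^{\ast}\ :\ 0\neq\mu\in\FF\}$ with $B_0:=(P^{-1})^{\ast}\left[\begin{smallmatrix}I_{n-1}&{\bf z}\\{\bf z}^{\ast}&{\bf z}^{\ast}{\bf z}\end{smallmatrix}\right]P^{-1}$ and ${\bf v}:=(P^{-1})^{\ast}{\bf e}_n$. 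Because $\left[\begin{smallmatrix}I_{n-1}&{\bf z}\\{\bf z}^{\ast}&{\bf z}^{\ast}{\bf z}\end{smallmatrix}\right]=\left[\begin{smallmatrix}I_{n-1}\\{\bf z}^{\ast}\end{smallmatrix}\right]\left[\begin{smallmatrix}I_{n-1}&{\bf z}\end{smallmatrix}\right]$ has rank $n-1$, so does $B_0$; moreover $\left[\begin{smallmatrix}I_{n-1}&{\bf z}\\{\bf z}^{\ast}&{\bf z}^{\ast}{\bf z}\end{smallmatrix}\right]=R(I_{n-1}\oplus 0)R^{\ast}$ for $R:=\left[\begin{smallmatrix}I_{n-1}&0\\{\bf z}^{\ast}&1\end{smallmatrix}\right]$, so $B_0=Q(I_{n-1}\oplus 0)Q^{\ast}$ with $Q:=(P^{-1})^{\ast}R$, and $Q^{-1}{\bf v}=R^{-1}{\bf e}_n={\bf e}_n$, which has nonzero last coordinate. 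Lemma~\ref{pomozna} then gives $\rank(B_0+\mu{\bf v}{\bf v}^{\ast})=n$ for every nonzero $\mu\in\FF$; since $q\geq3$ this is a set of $q-1\geq2$ pairwise adjacent rank-$n$ matrices, so ${\cal L}^{-1}$ is a leaf of the rank-$(n-1)$ matrix $B_0$, generated by ${\bf v}=(P^{-1})^{\ast}{\bf e}_n$. I expect the only genuinely fiddly points to be the block cancellations establishing~(\ref{eq5}) and the observation that $y_n\inv{y}_n\in\FF\backslash\{0\}$, which is exactly what makes $\lambda\mapsto\mu$ a permutation of $\FF\backslash\{0\}$; everything else is bookkeeping.
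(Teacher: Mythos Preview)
Your proof is correct and follows essentially the same route as the paper, which for both parts simply writes ``a straightforward calculation shows that'' the product with the claimed inverse is $I_n$, and for the leaf statement just notes that $\rank\left[\begin{smallmatrix}I_{n-1}&{\bf z}\\{\bf z}^{\ast}&{\bf z}^{\ast}{\bf z}\end{smallmatrix}\right]=n-1$. Your LDU/Schur factorization in part~(2) is a minor variation on the paper's direct verification (the paper handles the singular case $\mu={\bf w}^{\ast}D^{-1}{\bf w}$ by a separate factorization via $D=QQ^{\ast}$), and you supply more detail for the leaf claim than the paper's ``the rest is obvious,'' but the underlying arguments are the same.
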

\begin{proof}
(i) A straightforward calculation shows that $(A+\lambda {\bf x}{\bf x}^{\ast})B=I_n$, where $B$ is the matrix in~(\ref{eq5}). The rest is obvious since $\rank \left[\begin{smallmatrix}
I_{n-1}& {\bf z}\\
{\bf z}^{\ast}& {\bf z}^{\ast}{\bf z}
\end{smallmatrix}\right]=n-1$.
(ii) If $\mu\neq {\bf w}^{\ast} D^{-1}{\bf w}$, a straightforward calculation shows that $\left[\begin{smallmatrix}
\mu & {\bf w}^{\ast}\\
{\bf w} & D
\end{smallmatrix}\right]C=I_n$, where $C$ is the matrix in~(\ref{ii2}). By~(\ref{i8}), there is invertible $Q$ such that $D=QQ^{\ast}$. So if ${\bf z}:= Q^{-1}{\bf w}$, then
$\left[\begin{smallmatrix}
{\bf w}^{\ast} D^{-1}{\bf w} & {\bf w}^{\ast}\\
{\bf w} & D
\end{smallmatrix}\right]=
\left[\begin{smallmatrix}
1 & 0\\
0 & Q
\end{smallmatrix}\right]
\left[\begin{smallmatrix}
{\bf z}^{\ast}{\bf z} & {\bf z}^{\ast}\\
{\bf z} & I_{n-1}
\end{smallmatrix}\right]
\left[\begin{smallmatrix}
1 & 0\\
0 & Q^{\ast}
\end{smallmatrix}\right]$ is of rank $n-1$.
\end{proof}

\begin{cor}\label{lemma22}
Let $q\geq 3$. If ${\cal L}_1\neq {\cal L}_2$ are two leaves of matrix $A$, with $\rank A= n-1$, and $N_i$ is the matrix of rank $n-1$ of the leaf ${\cal L}_i^{-1}$, then
\begin{equation}\label{ii3}
N_i=Q\begin{bmatrix}
I_{n-1}& {\bf z}_i\\
{\bf z}_i^{\ast}& {\bf z}_i^{\ast}{\bf z}_i
\end{bmatrix}Q^{\ast}\qquad (i=1,2)
\end{equation}
for some invertible matrix $Q$
and column vectors ${\bf z}_1\neq {\bf z}_2$, so $\rank(N_1-N_2)=2$.
\end{cor}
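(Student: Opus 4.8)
The plan is to reduce everything to the normal form for a rank-$(n-1)$ matrix provided by~(\ref{i8}) and then apply part~(i) of Lemma~\ref{lemma6} to each of the two leaves simultaneously. Concretely, I would start by writing $A=P(E_{11}+\cdots+E_{n-1,n-1})P^{\ast}=P(I_{n-1}\oplus 0)P^{\ast}$ for a fixed invertible $P$. By Lemma~\ref{pomozna} and Corollary~\ref{nova}, a leaf of $A$ has the shape ${\cal L}=\{A+\lambda{\bf x}{\bf x}^{\ast}:0\neq\lambda\in\FF\}$ where ${\bf x}=P(y_1,\dots,y_n)^{\tr}$ with $y_n\neq 0$; so write ${\cal L}_i$ with generating vector ${\bf x}_i=P(y_1^{(i)},\dots,y_n^{(i)})^{\tr}$, $y_n^{(i)}\neq 0$, for $i=1,2$. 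By Lemma~\ref{lemma6}(i), the rank-$(n-1)$ matrix $N_i$ attached to ${\cal L}_i^{-1}$ is
\begin{equation}\label{ii3plan}
N_i=(P^{-1})^{\ast}\begin{bmatrix} I_{n-1} & {\bf z}_i\\ {\bf z}_i^{\ast} & {\bf z}_i^{\ast}{\bf z}_i\end{bmatrix}P^{-1},\qquad {\bf z}_i=\Bigl(\tfrac{-y_1^{(i)}}{y_n^{(i)}},\dots,\tfrac{-y_{n-1}^{(i)}}{y_n^{(i)}}\Bigr)^{\tr}.
\end{equation}
Setting $Q:=(P^{-1})^{\ast}$, this is exactly~(\ref{ii3}); so only two things remain to check: that ${\bf z}_1\neq{\bf z}_2$, and that consequently $\rank(N_1-N_2)=2$.

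For ${\bf z}_1\neq{\bf z}_2$ I would argue by contradiction: if ${\bf z}_1={\bf z}_2={\bf z}$, then, setting $r_i:=\bigl(\lambda_i y_n^{(i)}\inv{y}_n^{(i)}\bigr)^{-1}$ as $\lambda_i$ ranges over $\FF\setminus\{0\}$, the two leaves ${\cal L}_1^{-1}$ and ${\cal L}_2^{-1}$ both lie inside the set $\bigl\{(P^{-1})^{\ast}\bigl[\begin{smallmatrix}I_{n-1}&{\bf z}\\ {\bf z}^{\ast}&{\bf z}^{\ast}{\bf z}+\mu\end{smallmatrix}\bigr]P^{-1}:\mu\in\FF^{\times}\bigr\}$, which is a single leaf of the rank-$(n-1)$ matrix $(P^{-1})^{\ast}\bigl[\begin{smallmatrix}I_{n-1}&{\bf z}\\ {\bf z}^{\ast}&{\bf z}^{\ast}{\bf z}\end{smallmatrix}\bigr]P^{-1}$. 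Because $A\mapsto A^{-1}$ is an adjacency-preserving bijection on $\hgl$, and a leaf of a fixed rank-$(n-1)$ matrix $M$ is recovered from $M^{-1}$ by the same construction (apply Lemma~\ref{lemma6}(i) to $M$), this would force ${\cal L}_1={\cal L}_2$, contrary to hypothesis. Alternatively, and perhaps more cleanly, one notes that ${\bf z}_i$ is, up to scaling, the vector ${\bf x}_i$ expressed in the $P$-coordinates with its last coordinate normalised to $1$; since ${\cal L}_1\neq{\cal L}_2$ forces ${\bf x}_1,{\bf x}_2$ to span different lines (two distinct leaves of $A$ have non-proportional generating vectors, again by Lemma~\ref{pomozna}), the normalised vectors differ, i.e. ${\bf z}_1\neq{\bf z}_2$.

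Finally, granted ${\bf z}_1\neq{\bf z}_2$, the rank computation is a short direct check: conjugating by the invertible matrix $Q^{-1}=P^{\ast}$ on the left and $(Q^{\ast})^{-1}=(P^{\ast})^{\ast}=\overline{P}$ on the right does not change rank, so
\begin{equation}\label{ii4plan}
\rank(N_1-N_2)=\rank\begin{bmatrix} 0 & {\bf z}_1-{\bf z}_2\\ ({\bf z}_1-{\bf z}_2)^{\ast} & {\bf z}_1^{\ast}{\bf z}_1-{\bf z}_2^{\ast}{\bf z}_2\end{bmatrix}.
\end{equation}
The upper-left $(n-1)\times(n-1)$ block is zero while the last column (and last row) is nonzero because ${\bf z}_1-{\bf z}_2\neq{\bf 0}$, so the rank is at least $2$; and it is at most $2$ since the matrix has nonzero entries only in the last row and last column, hence lies in the span of $E_{n,*}$-type rank contributions — explicitly it equals ${\bf e}_n{\bf v}^{\ast}+{\bf v}{\bf e}_n^{\ast}-({\bf z}_2^{\ast}{\bf z}_2-\text{correction}){\bf e}_n{\bf e}_n^{\ast}$ for ${\bf v}:=({\bf z}_1-{\bf z}_2\,;\,0)$, a sum whose image is contained in the two-dimensional space spanned by ${\bf e}_n$ and ${\bf v}$. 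I expect the only genuinely substantive point to be the implication ${\cal L}_1\neq{\cal L}_2\Rightarrow{\bf z}_1\neq{\bf z}_2$; the rest is bookkeeping with~(\ref{eq5}) and an elementary rank estimate.
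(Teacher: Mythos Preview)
Your proposal is correct and follows essentially the same route as the paper: apply Lemma~\ref{lemma6}(i) with the fixed $P$ from $A=P(I_{n-1}\oplus 0)P^{\ast}$ to obtain the common $Q=(P^{-1})^{\ast}$, then use that ${\cal L}_1\neq{\cal L}_2$ forces ${\bf x}_1,{\bf x}_2$ to be linearly independent, hence ${\bf z}_1\neq{\bf z}_2$. The paper's proof is three lines and leaves the final rank computation implicit; you have simply spelled out the bookkeeping it omits.
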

\begin{proof}
Pick  ${\bf x}_i$ such that ${\cal L}_i=\{A+\lambda{\bf x}_i{\bf x}_i^{\ast}\ :\ 0\neq\lambda\in\FF\}$.
By Lemma~\ref{lemma6},   $N_i$ is as in~(\ref{ii3}). Since ${\cal L}_1\neq {\cal L}_2$, ${\bf x}_1$ and ${\bf x}_2$ are linearly independent, so ${\bf z}_1\neq {\bf z}_2$.
\end{proof}

\begin{cor}\label{cor2}
Let  $q\geq 3$ and let $N_1\neq N_2$ be hermitian and of rank $n-1$. Then there are leaves ${\cal L}_1$ of $N_1$ and ${\cal L}_2$ of $N_2$ such that ${\cal L}_1^{-1}$ and ${\cal L}_2^{-1}$ are leaves of a common matrix of rank $n-1$ iff~(\ref{ii3}) holds for some invertible  $Q$ and  ${\bf z}_1\neq {\bf z}_2$.
\end{cor}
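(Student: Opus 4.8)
The plan is to prove both directions of the equivalence by passing through the geometric description provided by Corollary~\ref{lemma22}. The nontrivial implication is the forward one: assuming that there exist leaves $\mathcal L_1$ of $N_1$ and $\mathcal L_2$ of $N_2$ whose images under $A\mapsto A^{-1}$ share a common rank-$(n-1)$ matrix, I want to conclude that $N_1$ and $N_2$ are simultaneously congruent to matrices of the form $\bigl[\begin{smallmatrix} I_{n-1}& {\bf z}_i\\ {\bf z}_i^{\ast}& {\bf z}_i^{\ast}{\bf z}_i\end{smallmatrix}\bigr]$ via one common invertible $Q$, with distinct ${\bf z}_1,{\bf z}_2$. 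Conversely, if such $Q$ and ${\bf z}_1\neq{\bf z}_2$ exist, I want to produce the leaves. For the converse I would simply run Corollary~\ref{lemma22} in reverse: given $N_i=Q\bigl[\begin{smallmatrix} I_{n-1}& {\bf z}_i\\ {\bf z}_i^{\ast}& {\bf z}_i^{\ast}{\bf z}_i\end{smallmatrix}\bigr]Q^{\ast}$, the matrix $N_i$ has rank $n-1$ with kernel spanned by $Q^{-\ast}(-{\bf z}_i^{\tr},1)^{\tr}$ (here I use that the right kernel of $\bigl[\begin{smallmatrix} I_{n-1}& {\bf z}_i\\ {\bf z}_i^{\ast}& {\bf z}_i^{\ast}{\bf z}_i\end{smallmatrix}\bigr]$ is spanned by $(-{\bf z}_i^{\tr},1)^{\tr}$), so applying part~(i) of Lemma~\ref{lemma6} with $P$ chosen appropriately shows that the leaf of $N_i$ generated by the vector dual to this kernel has, under inversion, the matrix $A:=(Q^{-1})^{\ast}(I_{n-1}\oplus 0)Q^{-1}$ as its rank-$(n-1)$ matrix — the same $A$ for both $i$, since the $I_{n-1}\oplus 0$ block does not depend on ${\bf z}_i$. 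Then $\mathcal L_i:=\{A+\lambda{\bf x}_i{\bf x}_i^{\ast}\}$ for suitable ${\bf x}_i$ are the required leaves, and $\mathcal L_i^{-1}$ is a leaf of $N_i$ because inversion is an involutive bijection preserving adjacency in both directions and hence, by Lemma~\ref{list_v_list}, sends leaves to leaves.

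For the forward direction, suppose $\mathcal L_1^{-1}$ and $\mathcal L_2^{-1}$ are leaves of the common matrix $A$ of rank $n-1$. Inverting and using that $(\mathcal L_i^{-1})^{-1}=\mathcal L_i$, I see that $\mathcal L_1$ and $\mathcal L_2$ are leaves of $A^{-1}$ — wait, more carefully: $\mathcal L_i$ is a leaf of $N_i$ by hypothesis, and $\mathcal L_i^{-1}$ is a leaf of $A$; applying Corollary~\ref{lemma22} to the matrix $A$ (whose leaves include $\mathcal L_1^{-1}$ and $\mathcal L_2^{-1}$, which are distinct since $\mathcal L_1\neq\mathcal L_2$) yields invertible $Q$ and ${\bf z}_1\neq{\bf z}_2$ with $N_i'=Q\bigl[\begin{smallmatrix} I_{n-1}& {\bf z}_i\\ {\bf z}_i^{\ast}& {\bf z}_i^{\ast}{\bf z}_i\end{smallmatrix}\bigr]Q^{\ast}$, where $N_i'$ is the rank-$(n-1)$ matrix of the leaf $(\mathcal L_i^{-1})^{-1}=\mathcal L_i$. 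But $\mathcal L_i$ is also a leaf of $N_i$, and the rank-$(n-1)$ matrix of a given leaf is unique by Lemma~\ref{list_v_list}, so $N_i'=N_i$, which is exactly~(\ref{ii3}).

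The main obstacle I anticipate is the bookkeeping around \emph{which} matrix plays the role of the base of a leaf and the non-uniqueness of the pair $(A,{\bf x})$ representing a given leaf $\mathcal L$: a leaf $\mathcal L=\{A+\lambda{\bf x}{\bf x}^{\ast}:0\neq\lambda\in\FF\}$ determines $A$ (its unique rank-$(n-1)$ matrix) and the line $\FFq2{\bf x}$, but ${\bf x}$ only up to scalar, and one must check that the construction in the converse direction is insensitive to this ambiguity and that the coordinate normalization in Lemma~\ref{lemma6}(i) (taking the last coordinate of ${\bf x}$ nonzero) can always be arranged by a choice of $P$. Once the dictionary ``leaf of a rank-$(n-1)$ matrix $\leftrightarrow$ (rank-$(n-1)$ matrix, kernel line)'' and its behavior under inversion (given precisely by Lemma~\ref{lemma6}(i)) is set up cleanly, the corollary is a formal consequence of Corollary~\ref{lemma22} applied in both directions, together with the uniqueness statement in Lemma~\ref{list_v_list}. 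I would therefore keep the proof short, citing Lemma~\ref{lemma6}(i), Corollary~\ref{lemma22}, and Lemma~\ref{list_v_list}, and spending most of the words on the identification of the common base matrix $A$ with $(Q^{-1})^{\ast}(I_{n-1}\oplus 0)Q^{-1}$ in the converse direction.
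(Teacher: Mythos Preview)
Your proposal is correct and follows essentially the same approach as the paper: the forward direction applies Corollary~\ref{lemma22} to the common matrix $A$ via the identity ${\cal L}_i=({\cal L}_i^{-1})^{-1}$, and the converse identifies the common base as $(Q^{-1})^{\ast}(I_{n-1}\oplus 0)Q^{-1}$ using Lemma~\ref{lemma6}(i). The paper is slightly more direct in the converse, simply writing down the leaf ${\cal L}_i:=\{N_i+\lambda Q E_{nn}Q^{\ast}:0\neq\lambda\in\FF\}$ explicitly and reading off the inverse from~(\ref{eq5}), which avoids your detour through kernel vectors and the momentary relabeling of ${\cal L}_i$ as a leaf of $A$.
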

\begin{proof}
Assume that there are such ${\cal L}_1$ and ${\cal L}_2$. Then ${\cal L}_1^{-1}$ and ${\cal L}_2^{-1}$ are leaves of some $A$. Since ${\cal L}_i= ({\cal L}_i^{-1})^{-1}$, (\ref{ii3}) holds by Corollary~\ref{lemma22}.

Contrary, if (\ref{ii3}) holds, then pick ${\cal L}_i:=\{N_i+\lambda QE_{nn}Q^{\ast}\ :\ 0\neq\lambda\in\FF\}$. From (\ref{eq5}) it follows that ${\cal L}_1^{-1}$ and ${\cal L}_2^{-1}$ are both leaves of $A=(Q^{-1})^{\ast}(I_{n-1}\oplus 0)Q^{-1}$.
\end{proof}

The following lemma is the first big step toward the proof of Theorem~\ref{thm1}.

\begin{lemma}\label{lemma7}
Assume $q\geq 4$, $\Phi: \hgl\to \hgl$ is a bijection that preserves adjacency in both directions, ${\cal L}_1\neq {\cal L}_2$ are leaves of a common matrix of rank $n-1$, and $\Phi({\cal L}_1), \Phi({\cal L}_2)$ are not leaves of a common matrix.  Then $\Phi({\cal L}_1)^{-1}, \Phi({\cal L}_2)^{-1}$ are leaves of a common matrix. Moreover, if $\Phi({\cal L}_i)=\{M_i+\lambda {\bf x}_i{\bf x}_i^{\ast}\ :\ 0\neq \lambda\in \FF\}$ with $\rank M_i=n-1$, then ${\bf x}_1, {\bf x}_2$ are linearly dependent and $\rank(M_1-M_2)=2$.
\end{lemma}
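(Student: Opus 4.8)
The strategy is to exploit the interplay between leaves, the inversion map, and the combinatorial data encoded in Corollaries~\ref{lemma22} and~\ref{cor2}. Write $\Phi_\iota$ for the composition $A\mapsto \Phi(A)^{-1}$; since inversion preserves adjacency in both directions on $\hgl$ (as observed just before Lemma~\ref{lemma6}) and $\Phi$ does, $\Phi_\iota$ is again a bijection of $\hgl$ preserving adjacency in both directions. By Lemma~\ref{list_v_list} applied inside the set of rank-$(n-1)$ matrices, the images $\Phi({\cal L}_1),\Phi({\cal L}_2)$ are leaves of unique matrices $M_1,M_2$ of rank $n-1$, and likewise $\Phi_\iota({\cal L}_i)=\Phi({\cal L}_i)^{-1}$ are leaves of unique matrices $N_1,N_2$ of rank $n-1$. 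Since ${\cal L}_1,{\cal L}_2$ are leaves of a common matrix $A$ of rank $n-1$, Corollary~\ref{cor2} applied to the pair $({\cal L}_1^{-1},{\cal L}_2^{-1})$ — which are leaves of a common matrix by Lemma~\ref{lemma22} — tells us that the two matrices of rank $n-1$ carrying ${\cal L}_1^{-1}$ and ${\cal L}_2^{-1}$ satisfy the ``difference-of-rank-$2$'' normal form~(\ref{ii3}). The point will be to transport this structural fact through $\Phi$.

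First I would record what the hypothesis ``$\Phi({\cal L}_1),\Phi({\cal L}_2)$ are not leaves of a common matrix'' buys us: it forces $M_1\neq M_2$ and, more importantly, it rules out the easy alternative, so the leaves $\Phi({\cal L}_1),\Phi({\cal L}_2)$ must be related in the ``inverted'' way. Concretely, consider the bijection $\Phi_\iota$. Its restriction to the rank-$(n-1)$ matrices sends the common-matrix pair $({\cal L}_1,{\cal L}_2)$ to the pair $(\Phi({\cal L}_1)^{-1},\Phi({\cal L}_2)^{-1})$; I claim these last two are leaves of a common matrix. Suppose not. Then both pairs $({\cal L}_1,{\cal L}_2)$ and $(\Phi({\cal L}_1)^{-1},\Phi({\cal L}_2)^{-1}) = (\Phi_\iota({\cal L}_1),\Phi_\iota({\cal L}_2))$ are ``common-matrix'' pairs, while $(\Phi({\cal L}_1),\Phi({\cal L}_2))=(\Phi_\iota({\cal L}_1)^{-1},\Phi_\iota({\cal L}_2)^{-1})$ is not. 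Apply Corollary~\ref{cor2} to $N_1:=\Phi({\cal L}_1)^{-1}$ carrier, $N_2:=\Phi({\cal L}_2)^{-1}$ carrier: the left-hand condition of that corollary is exactly the statement that $\big(({\cal L}_1)^{-1}\big)^{?}$... I would instead use Corollary~\ref{cor2} directly on the carriers of $\Phi({\cal L}_1)$ and $\Phi({\cal L}_2)$ (call them $M_1,M_2$): the hypothesis says there is \emph{no} choice of leaves of $M_1$ and $M_2$ whose inverses lie on a common matrix, hence by Corollary~\ref{cor2} the matrices $M_1,M_2$ do \emph{not} admit the normal form~(\ref{ii3}). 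On the other hand, $\Phi^{-1}$ also preserves adjacency in both directions and sends the common-matrix pair carried by $M_1,M_2$-data... — the cleanest route is: the pair $({\cal L}_1,{\cal L}_2)$ is a common-matrix pair and a common-matrix pair remains one under inversion only in the sense of Lemma~\ref{lemma22}, which is a one-way implication producing~(\ref{ii3}); so apply $\Phi$ to $({\cal L}_1^{-1},{\cal L}_2^{-1})$ instead. Since these are leaves of a common matrix, if $\Phi({\cal L}_1^{-1}),\Phi({\cal L}_2^{-1})$ were also leaves of a common matrix we would be in the same bad situation; the hypothesis concerns $\Phi({\cal L}_i)$, not $\Phi({\cal L}_i^{-1})$, so the honest argument is to run the dichotomy of Lemma~\ref{list_v_list} together with a counting/uniqueness argument on the flower of $A$.

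Here is the dichotomy I would actually carry out. The matrix $A$ of rank $n-1$ has a flower of $q^{2(n-1)}$ leaves (noted after Corollary~\ref{nova}), any two of which are leaves of the common matrix $A$; moreover distinct leaves of $A$ meet in at most the single point... no — leaves are disjoint from $A$ and from each other except they share no vertex, but any two lie in $A$'s flower. The bijection $\Phi$ carries this flower to a family of $q^{2(n-1)}$ leaves of rank-$(n-1)$ matrices $\{M_j\}$; two images $\Phi({\cal L}_i),\Phi({\cal L}_j)$ either lie on a common matrix or they don't, and Corollary~\ref{cor2} says the ``don't'' case is governed by whether the $M$'s avoid~(\ref{ii3}). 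Now invoke the same analysis for $\Phi_\iota=\,^{-1}\!\circ\Phi$: its image of the flower of $A$ is $\{\Phi({\cal L}_j)^{-1}\}$, a family of $q^{2(n-1)}$ leaves of rank-$(n-1)$ matrices $\{N_j\}$, and by Lemma~\ref{lemma22} (applied to $\Phi_\iota$, which is an adjacency-preserving bijection) \emph{every} pair $N_i,N_j$ satisfies~(\ref{ii3}) with common $Q$ — because the pair $({\cal L}_i^{-1},{\cal L}_j^{-1})$ already does and $\Phi_\iota$ preserves this via... — the honest statement is that $\Phi_\iota({\cal L}_i)=\Phi({\cal L}_i)^{-1}$, and $\Phi({\cal L}_i)$ is a leaf of $M_i$, so $\Phi({\cal L}_i)^{-1}$ is a leaf of a rank-$(n-1)$ matrix $N_i$ by Lemma~\ref{lemma6}(i), and moreover $N_1,N_2$ satisfy~(\ref{ii3}) iff there exist leaves of $M_1,M_2$ whose inverses lie on a common matrix (Corollary~\ref{cor2} applied to $M_1,M_2$). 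Thus: \emph{either} $\Phi({\cal L}_1),\Phi({\cal L}_2)$ lie on a common matrix, \emph{or} $M_1,M_2$ fail~(\ref{ii3}), \emph{or} $\Phi({\cal L}_1)^{-1},\Phi({\cal L}_2)^{-1}$ lie on a common matrix. The hypothesis kills the first; I must kill the second to land the conclusion. To kill the second I use that $\Phi^{-1}$ is also an adjacency-preserving bijection and carries the pair of leaves-of-$M_1$-and-$M_2$ back to $({\cal L}_1,{\cal L}_2)$, which \emph{is} a common-matrix pair; pushing the analysis of the $M_i$-flower through $\Phi^{-1}$ and invoking Lemma~\ref{lemma22} forces the $M_i$ to satisfy~(\ref{ii3}) after all — contradiction with the second alternative unless we are in the third. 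Hence $\Phi({\cal L}_1)^{-1},\Phi({\cal L}_2)^{-1}$ are leaves of a common matrix, which is the main assertion.

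For the ``moreover'' part, having written $\Phi({\cal L}_i)^{-1}$ as a leaf of a common matrix $\widetilde A$ of rank $n-1$, apply Corollary~\ref{lemma22} with the roles inverted: the leaves $\big(\Phi({\cal L}_1)^{-1}\big)^{-1}=\Phi({\cal L}_1)$ and $\Phi({\cal L}_2)=\big(\Phi({\cal L}_2)^{-1}\big)^{-1}$ are leaves of rank-$(n-1)$ matrices $M_1,M_2$ which, by that corollary, satisfy~(\ref{ii3}) and hence $\rank(M_1-M_2)=2$. It remains to see that the generating vectors ${\bf x}_1,{\bf x}_2$ of $\Phi({\cal L}_1),\Phi({\cal L}_2)$ are linearly dependent. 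For this I would use Lemma~\ref{lemma6}(i): when $\Phi({\cal L}_i)^{-1}$ is a leaf of $\widetilde A$, that lemma tells us the generating vector of the leaf $\Phi({\cal L}_i)$ of $M_i$ is $(Q^{-1})^{\ast}{\bf e}_n$ up to the normalisation there — and since both $\Phi({\cal L}_1)^{-1}$ and $\Phi({\cal L}_2)^{-1}$ are leaves of the \emph{same} $\widetilde A$, applying Lemma~\ref{lemma6}(i) to $\widetilde A$ with the single matrix $Q$ built from $\widetilde A$ shows both generating vectors ${\bf x}_1,{\bf x}_2$ equal a common scalar multiple of $(Q^{-1})^{\ast}{\bf e}_n$, hence are linearly dependent. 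I expect the main obstacle to be precisely this bookkeeping around Corollary~\ref{cor2}: the corollary is an ``iff'' between a statement about \emph{some} pair of leaves and the algebraic normal form~(\ref{ii3}), so I must be careful that the particular leaves ${\cal L}_1,{\cal L}_2$ we are handed are the ones witnessing it — which is where the uniqueness clause in Lemma~\ref{list_v_list} (each image leaf has a \emph{unique} carrier) does the essential work, pinning $M_i$ and $N_i$ down so that the three-way dichotomy above is exhaustive and the two unwanted branches can be excluded.
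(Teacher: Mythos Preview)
Your proposal has a genuine gap in the proof of the main assertion. The trichotomy you set up --- ``either $\Phi({\cal L}_1),\Phi({\cal L}_2)$ lie on a common matrix, or $M_1,M_2$ fail~(\ref{ii3}), or $\Phi({\cal L}_1)^{-1},\Phi({\cal L}_2)^{-1}$ lie on a common matrix'' --- is not exhaustive. Corollary~\ref{cor2} is an existential statement: if $M_1,M_2$ satisfy~(\ref{ii3}) then \emph{some} leaves of $M_1,M_2$ have inverses sharing a carrier, but nothing forces these to be the particular leaves $\Phi({\cal L}_1),\Phi({\cal L}_2)$ (indeed, in the proof of Corollary~\ref{cor2} the witnessing leaves are generated by the specific vector $Q{\bf e}_n$). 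Your attempt to ``kill the second alternative'' by pushing through $\Phi^{-1}$ and invoking Lemma~\ref{lemma22} is also circular: that lemma concerns the map $A\mapsto A^{-1}$, not an arbitrary adjacency-preserving bijection, so it yields no information about how $\Phi^{-1}$ interacts with the normal form~(\ref{ii3}). You seem to sense this yourself, since the argument keeps restarting (``I would instead use\ldots'', ``the cleanest route is\ldots'', ``the honest argument is\ldots'') without ever closing.

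The paper's proof proceeds in the opposite order from what you attempt: it first establishes the ``moreover'' clause by a direct computation, and only then deduces the main assertion from it via Lemma~\ref{lemma6}(ii). The key idea missing from your argument is the use of the \emph{graph distance} $d$ in $\hgl$. Since the common carrier $N$ of ${\cal L}_1,{\cal L}_2$ has rank $n-1$ and lies outside $\hgl$, one shows (using Corollary~\ref{nova} and $q\geq 4$) that for fixed $B_1\in{\cal L}_1$ one has $d(B_1,B_2)=2$ for all but at most one $B_2\in{\cal L}_2$; since $\Phi$ preserves $d$, this forces $\rank\big(M_1-M_2+\mu_1{\bf x}_1{\bf x}_1^{\ast}-\mu_2{\bf x}_2{\bf x}_2^{\ast}\big)=2$ for all nonzero $\mu_1,\mu_2$. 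This rank condition, analysed by elementary $2\times 2$ determinant calculations, yields first $\rank(M_1-M_2)=2$, then the linear dependence of ${\bf x}_1,{\bf x}_2$, and finally (via Lemma~\ref{lemma6}(ii)) the common carrier for $\Phi({\cal L}_1)^{-1},\Phi({\cal L}_2)^{-1}$. Your observation that Lemma~\ref{lemma6}(i) and Corollary~\ref{lemma22} derive the ``moreover'' clause \emph{from} the main assertion is correct, but that is the easy direction; the substance of the lemma lies in the reverse implication, which needs the distance argument.
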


\begin{proof}
For $i=1,2$ write ${\cal L}_i=\{N+\lambda{\bf y}_i{\bf y}_i^{\ast}\ :\  0\neq\lambda\in \FF\}$, where $\rank N=n-1$.
There exist bijections $f_1$ and $f_2$ on $\FF\backslash \{0\}$, column vectors ${\bf x}_1, {\bf x}_2$, and hermitian matrices $M_1\neq M_2$ of rank $n-1$  such that $\Phi(N+\lambda{\bf y}_i{\bf y}_i^{\ast})=M_i+f_i(\lambda) {\bf x}_i{\bf x}_i^{\ast}$ for all $\lambda\neq 0$.
Since ${\cal L}_1\neq {\cal L}_2$, ${\bf y}_1$ and ${\bf y}_2$ are linearly independent. Consequently,
\begin{equation}\label{ii4}
d(B_1,B_2)\geq\rank(B_1-B_2)=2
\end{equation}
for all $B_1\in {\cal L}_1$ and $B_2\in {\cal L}_2$.
Pick $B_1:=N+{\bf y}_1{\bf y}_1^{\ast}$ and set $B_2(\lambda):=N+\lambda{\bf y}_2{\bf y}_2^{\ast}$.
By Corollary~\ref{nova}, matrix  $B_2(\lambda)+{\bf y}_1{\bf y}_1^{\ast}$ is singular for at most one $\lambda$, which we denote it by~$\lambda_0$.
If $\lambda\notin\{0,\lambda_0\}$, then
$\rank\big(B_1-(B_2(\lambda)+{\bf y}_1{\bf y}_1^{\ast})\big)=1=\rank\big((B_2(\lambda)+{\bf y}_1{\bf y}_1^{\ast})-B_2(\lambda)\big)$ so $d(B_1,B_2(\lambda))=2$.
Hence, $d\big(M_1+f_1(1){\bf x}_1{\bf x}_1^{\ast},M_2+f_2(\lambda){\bf x}_2{\bf x}_2^{\ast}\big)=2$ for $\lambda\notin\{0,\lambda_0\}$. Since $q\geq 4$ and $\Phi$ preserves adjacency in both directions,
\begin{equation}\label{eq6}
\rank\big(M_1-M_2+f_1(1){\bf x}_1{\bf x}_1^{\ast}-f_2(\lambda){\bf x}_2{\bf x}_2^{\ast}\big)=2
\end{equation}
holds for at least two distinct nonzero $\lambda_1$ and $\lambda_2$. Let $A$ be the matrix in~(\ref{eq6}) evaluated at $\lambda=\lambda_1$ and pick an invertible $P$ such that $A=P(E_{11}+E_{22})P^{\ast}$. Since $f_2(\lambda_1)-f_2(\lambda_2)\neq 0$ and $\rank\big(A+(f_2(\lambda_1)-f_2(\lambda_2)){\bf x}_2{\bf x}_2^{\ast}\big)=2$, ${\bf x}_2$ is of the form $P(\ast,\ast,0,\ldots,0)^{\tr}P^{\ast}$ by Lemma~\ref{pomozna}. Hence, $M_1-M_2+f_1(1){\bf x}_1{\bf x}_1^{\ast}$ is of the form
\begin{equation}\label{eq7}
P\begin{bmatrix}
*&*&0&\ldots&0\\
*&*&0&\ldots&0\\
0&0&0&\ldots&0\\
\vdots&\vdots&\vdots&\ddots&\vdots\\
0&0&0&\ldots&0
\end{bmatrix}P^{\ast}.
\end{equation}
In a similar way as we obtained~(\ref{eq6}) we deduce that
$$
\rank\big(M_1-M_2+f_1(\lambda){\bf x}_1{\bf x}_1^{\ast}-f_2(\lambda_1){\bf x}_2{\bf x}_2^{\ast}\big)=2
$$
for all nonzero $\lambda$ with one possible exception. Let $\lambda_3\neq 1$ be any such $\lambda$. Since  $\rank\big(A+(f_1(\lambda_3)-f_1(1)){\bf x}_1{\bf x}_1^{\ast}\big)=2$, ${\bf x}_1$ is of the form $P(*,*,0,\ldots,0)^{\tr}P^{\ast}$ by Lemma~\ref{pomozna}.
Consequently, $M_1-M_2$ is of the form~(\ref{eq7}). Therefore,
\begin{equation}\label{ii5}
1\leq \rank(M_1-M_2)\leq 2.
\end{equation}
Moreover, $\rank(M_1-M_2+\mu_1{\bf x}_1{\bf x}_1^{\ast}-\mu_2{\bf x}_2{\bf x}_2^{\ast})\leq 2$ for all $\mu_1,\mu_2\in\FF$. From~(\ref{ii4}), we deduce that $d(\Phi(B_1),\Phi(B_2))\geq 2$, so $\rank\big(\Phi(B_1)-\Phi(B_2)\big)\geq 2$ for all $B_1\in{\cal L}_1$ and $B_2\in{\cal L}_2$.
Hence, $\rank(M_1-M_2+\mu_1{\bf x}_1{\bf x}_1^{\ast}-\mu_2{\bf x}_2{\bf x}_2^{\ast})=2$ for all nonzero $\mu_1,\mu_2$.

In the sequel we may assume that $P$ is the identity matrix. Let $\dot{M}_i$ be the  upper-left $2\times 2$ block of $M_i$, so $M_1-M_2=(\dot{M}_1-\dot{M}_2)\oplus 0_{n-2}$. Similarly, let $\dot{{\bf x}}_i\in\FFq2^2$ be such that ${\bf x}_i=(\dot{{\bf x}}_i^{\tr},0,\ldots,0)^{\tr}$.
Then
\begin{equation}\label{ii6}
\rank(M_1-M_2)=\rank(\dot{M}_1-\dot{M}_2)
\end{equation}
and, for nonzero $\mu_1,\mu_2$, we have
\begin{equation}\label{eq8}
\rank\big(\dot{M}_1-\dot{M}_2+\mu_1\dot{{\bf x}}_1\dot{{\bf x}}_1^{\ast}-\mu_2\dot{{\bf x}}_2\dot{{\bf x}}_2^{\ast}\big)=2.
\end{equation}

Assume that $\rank(\dot{M}_1-\dot{M}_2)=1$, i.e.,  $\dot{M}_1-\dot{M}_2={\bf z}{\bf z}^{\ast}$ for some ${\bf z}\in \FFq2^2$. Then each of the three pairs $\{{\bf z}, \dot{{\bf x}}_1\}$, $\{{\bf z}, \dot{{\bf x}}_2\}$, $\{\dot{{\bf x}}_1, \dot{{\bf x}}_2\}$ consists of two linearly independent vectors. In fact, if for example ${\bf z}$ and $\dot{{\bf x}}_1$ are linearly dependent, then $\mu_1$ can be chosen in such way  that  ${\bf z}{\bf z}^{\ast}+\mu_1\dot{{\bf x}}_1\dot{{\bf x}}_1^{\ast}=0$, which contradicts~(\ref{eq8}). Hence, if $2\times 2$ invertible matrix $Q$ is such that $\dot{{\bf x}}_1=Q(1,0)^{\tr}$ and $\dot{{\bf x}}_2=Q(0,1)^{\tr}$, then ${\bf z}=Q(w_1,w_2)^{\tr}$ for some nonzero $w_1,w_2$. Consequently, for $\mu_2\notin \{0,w_2\inv{w}_2\}$ and $\mu_1:=\mu_2 w_1\inv{w}_1(w_2\inv{w}_2-\mu_2)^{-1}$, the determinant of matrix~(\ref{eq8}) equals
$$\det Q(-\mu_2 w_1\inv{w}_1+\mu_1 w_2\inv{w}_2-\mu_1\mu_2)\det (Q^{\ast})=0,$$ a contradiction. From (\ref{ii5}) and (\ref{ii6}) it follows that $\rank(M_1-M_2)=2$.

Assume that ${\bf x}_1$, ${\bf x}_2$ are linearly independent. Choose $Q$ as above and write $$Q^{-1}(\dot{M}_1-\dot{M}_2)(Q^{-1})^{\ast}=\begin{bmatrix}
\gamma& a\\
\inv{a}&\delta
\end{bmatrix}.$$
From the determinant of matrix (\ref{eq8}) we deduce that
\begin{equation}\label{eq2}
\mu_1(\delta-\mu_2)\neq \gamma \mu_2+a\inv{a}-\gamma\delta
\end{equation}
for all nonzero $\mu_1, \mu_2$. By~(\ref{ii6}), we have $\rank(\dot{M}_1-\dot{M}_2)=2$, so $a\inv{a}-\gamma\delta\neq 0$. Consequently, since $q\geq 4$, there is $\mu_2\in \FF\backslash\{0,\delta\}$ such that $\gamma \mu_2+a\inv{a}-\gamma\delta\neq 0$.
If we choose $\mu_1:=(\gamma \mu_2+a\inv{a}-\gamma\delta)(\delta-\mu_2)^{-1}$ we get in contradiction with~(\ref{eq2}).

Hence, ${\bf x}_1$ and ${\bf x}_2$ are linearly dependent, that is, ${\bf x}_2=c{\bf x}_1$ for some $c\neq 0$.
Choose any invertible $2\times 2$ matrix $R$ with $\dot{\bf x}_1$ as the first column and write
$$R^{-1}(\dot{M}_1-\dot{M}_2)(R^{-1})^{\ast}=\begin{bmatrix}
\varepsilon&b\\
\inv{b} & \eta
\end{bmatrix}.$$ Then, $b\inv{b}-\varepsilon\eta\neq 0$ and~(\ref{eq8}) implies that
\begin{equation*}
\det\begin{bmatrix}
\varepsilon+\mu_1-\mu_2 c\inv{c}&b\\
\inv{b} &\eta
\end{bmatrix}\neq 0
\end{equation*}
for all nonzero $\mu_1$ and $\mu_2$.
Consequently,
\begin{equation}\label{eq3}
(\mu_1-\mu_2c\inv{c})\eta\neq b\inv{b}-\varepsilon\eta.
\end{equation}
If $\eta\neq 0$,  then we get in contradiction with~(\ref{eq3}) for $\mu_1:=\frac{b\inv{b}-\varepsilon\eta}{\eta(1-\zeta)}$ and $\mu_2:=\frac{\zeta\mu_1}{c\inv{c}}$, where $\zeta\in\FF\backslash\{0,1\}$ is arbitrary. Hence $\eta=0$. Consequently,
$$(R^{-1}\oplus I_{n-2})M_i(R^{-1}\oplus I_{n-2})^{\ast}=\begin{bmatrix}
\nu_i&{\bf z}_i^{\ast}\\
{\bf z}_i & D
\end{bmatrix}\qquad (i=1,2)$$
for some $\nu_i$, ${\bf z}_i$, and $D\in\hminus1$ which is the same for $i=1, 2$.
Moreover,
$$(R^{-1}\oplus I_{n-2})(M_i+\lambda {\bf x}_1{\bf x}_1^{\ast})(R^{-1}\oplus I_{n-2})^{\ast}=\begin{bmatrix}
\nu_i+\lambda&{\bf z}_i^{\ast}\\
{\bf z}_i & D
\end{bmatrix}\qquad (i=1,2)$$
is invertible if and only if $\lambda\neq 0$. Hence, a Laplace decomposition of the determinant on the first row shows that $\det D\neq 0$, that is, $D$ is invertible. By Lemma~\ref{lemma6} (ii), $\nu_i={\bf z}_i^{\ast}D^{-1}{\bf z}_i$ and for $\lambda\neq 0$ we have
$$(R\oplus I_{n-2})^{\ast}(M_i+\lambda {\bf x}_1{\bf x}_1^{\ast})^{-1}(R\oplus I_{n-2})=
\begin{bmatrix}
0&0\\
0&D^{-1}
\end{bmatrix}+\lambda^{-1}
\begin{bmatrix}
-1\\
D^{-1}{\bf z}_i
\end{bmatrix}
\begin{bmatrix}
-1\\
D^{-1}{\bf z}_i
\end{bmatrix}^{\ast}.$$
Hence
$\Phi({\cal L}_i)^{-1}=\{(M_i+\lambda {\bf x}_i{\bf x}_i^{\ast})^{-1}\ :\ 0\neq \lambda\in \FF\}$ is a leaf of matrix
\begin{equation}\label{eq9}
(R^{-1}\oplus I_{n-2})^{\ast}\begin{bmatrix}
0&0\\
0&D^{-1}
\end{bmatrix}(R^{-1}\oplus I_{n-2}).
\end{equation}
This completes the proof since matrix~(\ref{eq9}) does not depend on $i=1,2$.
\end{proof}

In the final part of the proof of Theorem~\ref{thm1} we will try to extend adjacency preserving map $\Phi$ from the set $\hgl$ to $\h$. The next lemma will provide us some information on the matrix of a leaf $\Phi({\cal L})$.

\begin{lemma}\label{lemma10}
Assume $q\geq 3$, $M_1\neq M_2$ are hermitian matrices with $\rank M_1=r$, $\rank M_2\in\{r-1,r\}$, where $1\leq r\leq n-1$, and  ${\cal L}_i=\{M_i+\lambda {\bf x}_i{\bf x}_i^{\ast}\ :\ 0\neq \lambda\in \FF\}$ is a leaf of $M_i$.
Then for any $\lambda_1\in \FF\backslash\{0\}$ there is a unique  $\lambda_2\in \FF\backslash\{0\}$, and for any $\lambda_2\in \FF\backslash\{0\}$ there is a unique $\lambda_1\in \FF\backslash\{0\}$ such that
\begin{equation}\label{eq10}
\rank\big((M_1+\lambda_1 {\bf x}_1{\bf x}_1^{\ast})-(M_2+\lambda_2 {\bf x}_2{\bf x}_2^{\ast})\big)=1
\end{equation}
if and only if one of the following holds:
\begin{enumerate}
\item ${\bf x}_1, {\bf x}_2$ are linearly independent, $M_1-M_2=a{\bf x}_1{\bf x}_2^{\ast}+\inv{a}{\bf x}_2{\bf x}_1^{\ast}$ for some $a\neq 0$,
\item ${\bf x}_1, {\bf x}_2$ are linearly dependent and $\rank(M_1-M_2)=1$.
\end{enumerate}
\end{lemma}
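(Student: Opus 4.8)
The plan is to separate two cases according to whether ${\bf x}_1$ and ${\bf x}_2$ are linearly dependent or not, after passing to a convenient basis. Put $N:=M_1-M_2$, a nonzero hermitian matrix; then~(\ref{eq10}) says exactly that $N+\lambda_1{\bf x}_1{\bf x}_1^{\ast}-\lambda_2{\bf x}_2{\bf x}_2^{\ast}$ has rank one, hence equals ${\bf w}{\bf w}^{\ast}$ for some nonzero column vector ${\bf w}$ (by~(\ref{i8}), every rank-one hermitian matrix over $\FFq2$ is of this form).

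For the implication ``(i) or (ii) $\Rightarrow$ matching'' I would check the two possibilities directly. In case (i), with ${\bf x}_1,{\bf x}_2$ linearly independent, $N+\lambda_1{\bf x}_1{\bf x}_1^{\ast}-\lambda_2{\bf x}_2{\bf x}_2^{\ast}=[{\bf x}_1\ {\bf x}_2]\left[\begin{smallmatrix}\lambda_1 & a\\ \inv{a} & -\lambda_2\end{smallmatrix}\right][{\bf x}_1\ {\bf x}_2]^{\ast}$ has the same rank as the middle $2\times 2$ matrix, which equals one precisely when $\lambda_1\lambda_2=-a\inv{a}$; as $a\neq 0$, this is a bijection $\lambda_1\leftrightarrow\lambda_2$ of $\FF\setminus\{0\}$. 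In case (ii), writing ${\bf x}_2=c{\bf x}_1$ and $N={\bf z}{\bf z}^{\ast}$, the matrix in~(\ref{eq10}) equals ${\bf z}{\bf z}^{\ast}+(\lambda_1-\lambda_2 c\inv{c}){\bf x}_1{\bf x}_1^{\ast}$; the leaf hypotheses force ${\bf z}\notin\langle{\bf x}_1\rangle$, since otherwise $M_1=M_2+d\inv{d}\,{\bf x}_1{\bf x}_1^{\ast}$ with ${\bf x}_1\notin\mathrm{Col}(M_2)$, so Lemma~\ref{pomozna} yields $\mathrm{Col}(M_1)=\mathrm{Col}(M_2)+\langle{\bf x}_1\rangle\ni{\bf x}_1$, contradicting that ${\cal L}_1$ is a leaf of $M_1$. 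With ${\bf z}$ and ${\bf x}_1$ independent, the matrix has rank one exactly when $\lambda_1=\lambda_2 c\inv{c}$, again a bijection on $\FF\setminus\{0\}$.

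For the converse, assume the matching property. If ${\bf x}_2=c{\bf x}_1$, then~(\ref{eq10}) concerns $N+\mu{\bf x}_1{\bf x}_1^{\ast}$ with $\mu=\lambda_1-\lambda_2 c\inv{c}$, and Lemma~\ref{pomozna} with Corollary~\ref{nova} shows that if $\rank N\geq 2$ this matrix is never of rank one (when ${\bf x}_1\notin\mathrm{Col}(N)$) or is of rank one for at most one $\mu\neq 0$ (when ${\bf x}_1\in\mathrm{Col}(N)$), and in both situations matching fails at a suitable $\lambda_1$; hence $\rank N=1$, which is (ii). If ${\bf x}_1,{\bf x}_2$ are independent, choose invertible $P$ with ${\bf x}_1=P{\bf e}_1$, ${\bf x}_2=P{\bf e}_2$ and write $\tilde N:=P^{-1}N(P^{-1})^{\ast}=\left[\begin{smallmatrix}\dot N & B\\ B^{\ast} & C\end{smallmatrix}\right]$ in $2+(n-2)$ block form, so~(\ref{eq10}) becomes $\rank\big(\tilde N+\diag(\lambda_1,-\lambda_2)\oplus 0_{n-2}\big)=1$. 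The key point is $C=0$: otherwise, in any rank-one factorisation ${\bf w}{\bf w}^{\ast}$ of such a matrix the last $n-2$ entries of ${\bf w}$ form a fixed scalar multiple of a fixed vector (since $C$ is independent of $\lambda_1,\lambda_2$), and then the fixed off-diagonal block $B^{\ast}$ determines the first two entries of ${\bf w}$, hence determines $\lambda_1$ and $\lambda_2$ via the $(1,1)$- and $(2,2)$-entries; thus at most one pair $(\lambda_1,\lambda_2)$ is admissible, contradicting $|\FF\setminus\{0\}|\geq 2$. Once $C=0$, the same comparison gives $B=0$, so $\tilde N=\dot N\oplus 0$ and one is reduced to the $2\times 2$ question: for which nonzero hermitian $\dot N$ does $\dot N+\diag(\lambda_1,-\lambda_2)$ have rank one for exactly one $\lambda_2\in\FF\setminus\{0\}$ per $\lambda_1\in\FF\setminus\{0\}$ and conversely? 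A short analysis of its determinant — distinguishing rank $0$ from rank $1$ while keeping $\lambda_1,\lambda_2$ nonzero — shows this forces the diagonal entries of $\dot N$ to vanish, i.e. $\dot N=\left[\begin{smallmatrix}0 & a\\ \inv{a} & 0\end{smallmatrix}\right]$ with $a\neq 0$; conjugating back and using $PE_{12}P^{\ast}={\bf x}_1{\bf x}_2^{\ast}$, this reads $N=a{\bf x}_1{\bf x}_2^{\ast}+\inv{a}{\bf x}_2{\bf x}_1^{\ast}$, which is (i).

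The step I expect to be the main obstacle is the converse in the independent case: proving $C=0$ (and then $B=0$) cleanly — the mechanism being that the off-diagonal and tail blocks of the matrix in~(\ref{eq10}) do not move with $(\lambda_1,\lambda_2)$ while its two affected diagonal entries do — followed by the $2\times 2$ determinant bookkeeping, which is where the assumption on the size of $\FF$ is really used.
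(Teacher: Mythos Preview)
Your argument is correct and follows the same overall strategy as the paper: split on whether ${\bf x}_1,{\bf x}_2$ are dependent, pass to a basis adapted to them, reduce to the $2\times 2$ block, and finish with a determinant computation. The tactical details differ in two places, and your choices are arguably cleaner. In the independent case, the paper kills the entries $a_{ij}$ with $\max(i,j)\geq 3$ one at a time by examining $2\times 2$ minors of the rank-one matrix (the minor on rows/columns $\{1,j\}$ would otherwise be nonzero for all but one $\lambda_1$), whereas you read off the whole tail at once from the factorisation ${\bf w}{\bf w}^{\ast}$: the bottom-right block determines ${\bf w}_2$ up to a norm-one scalar, the off-diagonal block then fixes ${\bf w}_1$ modulo that scalar, and the diagonal entries---where the scalar cancels---pin down $(\lambda_1,\lambda_2)$. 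In the dependent case, the paper studies the bijection $g\colon\lambda_1\mapsto\lambda_2$ and shows $\lambda_1-g(\lambda_1)b\bar b$ must be identically zero (via a minor argument when it is non-constant, then ruling out a nonzero constant), while you shortcut by invoking Lemma~\ref{pomozna} and Corollary~\ref{nova} to exclude $\rank N\geq 2$ directly; this is quicker and uses the ambient lemmas more efficiently. Both routes rely on $q\geq 3$ only at the final $2\times 2$ determinant step, as you note.
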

\begin{proof}
We first prove the sufficient part. In case (i) we choose an invertible matrix~$P$ with ${\bf x}_i$ as the $i$-th column. The matrix in~(\ref{eq10}) can be rewritten as
$$P\left(\begin{bmatrix}\lambda_1& a\\
\inv{a}& -\lambda_2\end{bmatrix}\oplus 0_{n-2}\right)P^{\ast}.$$ Given $\lambda_1\neq 0$, only $\lambda_2:=-a\inv{a}\lambda_1^{-1}$ fits~(\ref{eq10}). Similarly, given $\lambda_2\neq 0$, only $\lambda_1:=-a\inv{a}\lambda_2^{-1}$ fits~(\ref{eq10}).
If (ii) holds then ${\bf x}_2=b{\bf x}_1$ for some $b\neq 0$. Given $\lambda_1\neq 0$, $\lambda_2:=(b\inv{b})^{-1}\lambda_1$ fits~(\ref{eq10}). If $\lambda_2\neq (b\inv{b})^{-1}\lambda_1$, then~(\ref{eq10}) would imply that $\rank(M_1-M_2+\mu {\bf x}_1{\bf x}_1^{\ast})=1$ for some $\mu\neq 0$. Since $\rank(M_1-M_2)=1$, we deduce that $M_1-M_2=\nu {\bf x}_1{\bf x}_1^{\ast}$ for some $\nu\neq 0$, that is,  $M_2=M_1-\nu {\bf x}_1{\bf x}_1^{\ast}\in {\cal L}_1$. Consequently $\rank M_2=r+1$, a contradiction. In the same way we see that given $\lambda_2\neq 0$, there is a unique $\lambda_1\neq 0$ that fits ~(\ref{eq10}).

We now prove the necessity of condition (i) or (ii). Assume first that ${\bf x}_1, {\bf x}_2$ are linearly independent. Pick an invertible matrix $P$ as above.  By assumption~(\ref{eq10}), for any $\lambda_1\in \FF\backslash\{0\}$ there is a unique $\lambda_2\in \FF\backslash\{0\}$ such that
$N:=P^{-1}(M_1-M_2)(P^{-1})^{\ast}+\lambda_1E_{11}-\lambda_2E_{22}$ is of rank one. Let $a_{ij}$ denote $(i,j)$-th entry of $P^{-1}(M_1-M_2)(P^{-1})^{\ast}$. If $a_{1j}\neq 0$ for some $j\geq 3$, then the $2\times 2$ minor of $N$ formed by first and $j$-th row/column vanishes for at most one $\lambda_1$, which contradicts the assumption. Hence, $a_{1j}=0$ for all $j\geq 3$. Similarly we see that $a_{2j}=0$ for all $j\geq 3$. It is now obvious that  $a_{ij}=0$ if $i\geq 3$ or $j\geq 3$. If $a_{11}\neq 0$ then we choose $\lambda_1:=-a_{11}$. The  upper-left $2\times 2$ minor of $N$  vanishes only if $a_{12}=0$, however in that case more than one $\lambda_2$ posses the required property. Hence, $a_{11}=0$. Similarly we see that $a_{22}=0$. Consequently, $M_1-M_2=a_{12}{\bf x}_1{\bf x}_2^{\ast}+\inv{a}_{12}{\bf x}_2{\bf x}_1^{\ast}$ and $a_{12}\neq 0$, since $M_1\neq M_2$.

Assume now that ${\bf x}_1, {\bf x}_2$ are linearly dependent, that is, ${\bf x}_2=b{\bf x}_1$ for some $b\neq 0$.
Pick an invertible matrix $Q$ with ${\bf x}_1$ as its first column and let $g:\FF\backslash \{0\}\to \FF\backslash \{0\}$ be the bijection $\lambda_1\mapsto \lambda_2$ described in~(\ref{eq10}).
Then $N:=Q^{-1}(M_1-M_2)(Q^{-1})^{\ast}+(\lambda_1-g(\lambda_1)b\inv{b})E_{11}$ is of rank one for all $\lambda_1\in\FF\backslash\{0\}$. Let $a_{ij}$ denote $(i,j)$-th entry of $Q^{-1}(M_1-M_2)(Q^{-1})^{\ast}$.

If $\lambda_1\mapsto \lambda_1-g(\lambda_1)b\inv{b}$ is non-constant, then $a_{1j}=0$ for $j\geq 2$ since otherwise,  the $2\times 2$ minor of $N$ formed by first and $j$-th row/column would not vanish for all values $\lambda_1-g(\lambda_1)b\inv{b}$. Consequently, $a_{ij}=0$ if $i\geq 2$ or $j\geq 2$, that is, $M_1-M_2=a_{11}{\bf x}_1{\bf x}_1^{\ast}$. Hence, $M_2=M_1-a_{11}{\bf x}_1{\bf x}_1^{\ast}\in {\cal L}_1\cup\{M_1\}$, a contradiction.

We have proved that  $\lambda_1-g(\lambda_1)b\inv{b}\equiv \gamma$ for some constant $\gamma$. If $\gamma\neq 0$, then $g(\gamma)=0$, a contradiction. Hence, $\lambda_1-g(\lambda_1)b\inv{b}\equiv 0$ and $\rank (M_1-M_2)=1$.
\end{proof}

The next two lemmas describe connected components of the subgraph in $\h$, which is induced by matrices of rank $n-1$.

\begin{lemma}\label{pomozna2}
Let $N_1,N_2\in \h$ be adjacent and of rank $n-1$. If $\{N_1+\lambda {\bf y}{\bf y}^{\ast}\ : \ 0\neq\lambda\in\FF\}$ is a leaf of $N_1$, then $\{N_2+\lambda {\bf y}{\bf y}^{\ast} : 0\neq\lambda\in\FF\}$ is a leaf of~$N_2$.
\end{lemma}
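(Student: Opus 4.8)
The plan is to reduce the statement to the single claim that $N_1$ and $N_2$ have the same column space; write $\mathrm{col}(A)$ for the column space of a matrix $A$. First recall that if a hermitian matrix $A$ of rank $r$ is written as $A=P(\dot A\oplus 0)P^{\ast}$ with $P$ invertible and $\dot A\in\mathcal{HGL}_{r}(\FFq2)$, and if $\mathbf{x}=P(y_1,\dots,y_n)^{\tr}$, then the condition ``$y_i\neq 0$ for some $i>r$'' is precisely the statement $\mathbf{x}\notin\mathrm{col}(A)$. Thus Lemma~\ref{pomozna} says that $\mathbf{x}\notin\mathrm{col}(A)$ forces $\rank(A+\lambda\mathbf{x}\mathbf{x}^{\ast})=r+1$ for all $\lambda\neq 0$, whereas Corollary~\ref{nova} says that $\mathbf{x}\in\mathrm{col}(A)$ forces $\rank(A+\lambda\mathbf{x}\mathbf{x}^{\ast})\leq r$ for all $\lambda$; in particular the set $\{A+\lambda\mathbf{x}\mathbf{x}^{\ast}:0\neq\lambda\in\FF\}$ is a leaf of $A$ exactly when $\mathbf{x}\notin\mathrm{col}(A)$.

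Granting this, the reduction is immediate. Since $\{N_1+\lambda\mathbf{y}\mathbf{y}^{\ast}:0\neq\lambda\in\FF\}$ is a leaf of $N_1$, its members have rank $n$, which by Corollary~\ref{nova} (applied with $r=n-1$) rules out $\mathbf{y}\in\mathrm{col}(N_1)$; hence $\mathbf{y}\notin\mathrm{col}(N_1)$, and in particular $\mathbf{y}\neq 0$. Conversely, once we know $\mathbf{y}\notin\mathrm{col}(N_2)$, the pair $N_2,\mathbf{y}$ satisfies the hypotheses of Lemma~\ref{pomozna}, so $\{N_2+\lambda\mathbf{y}\mathbf{y}^{\ast}:0\neq\lambda\in\FF\}$ is by definition a leaf of $N_2$, which is the desired conclusion. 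So it remains only to prove $\mathrm{col}(N_1)=\mathrm{col}(N_2)$.

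For this, note that $N_1-N_2$ is hermitian of rank one, so by~(\ref{i8}) we may write $N_1-N_2=\mathbf{z}\mathbf{z}^{\ast}$ with $\mathbf{z}\neq 0$, that is, $N_1=N_2+\mathbf{z}\mathbf{z}^{\ast}$. Applying Lemma~\ref{pomozna} to $N_2$ with $\lambda=1$: were $\mathbf{z}\notin\mathrm{col}(N_2)$, we would obtain $\rank N_1=\rank(N_2+\mathbf{z}\mathbf{z}^{\ast})=(n-1)+1=n$, contradicting $\rank N_1=n-1$; hence $\mathbf{z}\in\mathrm{col}(N_2)$. Then for every $\mathbf{v}\in\FFq2^n$ we have $N_1\mathbf{v}=N_2\mathbf{v}+(\mathbf{z}^{\ast}\mathbf{v})\mathbf{z}\in\mathrm{col}(N_2)$, so $\mathrm{col}(N_1)\subseteq\mathrm{col}(N_2)$; since both subspaces have dimension $n-1$ they coincide. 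The argument is short and I do not expect a genuine obstacle; the only step with real content is this last one, namely using $\rank N_1=\rank N_2$ (via Lemma~\ref{pomozna} in contrapositive form) to place the ``direction'' $\mathbf{z}$ of the rank-one difference $N_1-N_2$ inside $\mathrm{col}(N_2)$, which is exactly what forces the two column spaces to agree.
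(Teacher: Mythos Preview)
Your proof is correct and is essentially the same argument as the paper's, recast in coordinate-free language: where the paper fixes $P$ with $N_1=P(I_{n-1}\oplus 0)P^{\ast}$ and tracks the last coordinate of $P^{-1}\mathbf{y}$ and $P^{-1}\mathbf{x}$, you phrase the identical reasoning as ``$\mathbf{y}\notin\mathrm{col}(N_1)$'' and ``$\mathbf{z}\in\mathrm{col}(N_2)$,'' arriving at $\mathrm{col}(N_1)=\mathrm{col}(N_2)$. Both proofs use Lemma~\ref{pomozna} in contrapositive form to force the rank-one difference vector into the column space, and then conclude via Lemma~\ref{pomozna} again that $\mathbf{y}$ generates a leaf of $N_2$.
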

\begin{proof}
By (\ref{i8}), $N_1=P(I_{n-1}\oplus 0)P^{\ast}$ for some invertible  matrix $P$. Let $P^{-1}{\bf y}=:\dot{\bf y}=(\dot{y}_1,\ldots,\dot{y}_{n})^{\tr}$. Since $N_1+\lambda {\bf y}{\bf y}^{\ast}$ is invertible for $\lambda\neq 0$, it follows that $\dot{y}_{n}\neq 0$. Since $N_1$ and $N_2$ are adjacent, $N_2=N_1+{\bf x}{\bf x}^{\ast}$ for some ${\bf x}$. Let  $P^{-1}{\bf x}=:\dot{\bf x}=(\dot{x}_1,\ldots,\dot{x}_{n})^{\tr}$. Since $\rank N_2=n-1$, we deduce that $\dot{x}_{n}=0$ by Lemma~\ref{pomozna}. Moreover, $N_2+\lambda {\bf y}{\bf y}^{\ast}$ is invertible for $\lambda\neq 0$.
\end{proof}

\begin{lemma}\label{lemma11}
Let $q\geq 3$. The subgraph in $\her$, induced by matrices of rank $n-1$, has $\frac{q^{2n}-1}{q^2-1}$ connected components, which are precisely the sets
\begin{equation}\label{ii9}
\{P(\dot{X}\oplus 0)P^{\ast} :  \dot{X}\in\hglnminus1 \},
\end{equation}
where $P$ is invertible, so any component is isomorphic to graph $\hglnminus1$.
\end{lemma}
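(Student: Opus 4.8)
The plan is to first identify a natural candidate for each connected component and then show these candidates are indeed the components. For a fixed invertible $P$, the set~(\ref{ii9}) consists of all hermitian matrices of rank $n-1$ whose kernel (as a map $\FFq2^n\to\FFq2^n$) is the line $\FFq2\cdot(P^{-1})^{\ast}{\bf e}_n$; equivalently, these are exactly the rank-$(n-1)$ hermitian matrices $A$ with a common fixed one-dimensional kernel. Since a rank-$(n-1)$ hermitian matrix is determined up to this data by an element of $\hglnminus1$ (via $\dot X\mapsto P(\dot X\oplus 0)P^\ast$, and the map $\dot X\mapsto P(\dot X\oplus 0)P^\ast$ is a graph isomorphism onto~(\ref{ii9}) because $\rank\big((P(\dot X_1\oplus 0)P^\ast)-(P(\dot X_2\oplus 0)P^\ast)\big)=\rank(\dot X_1-\dot X_2)$), each set~(\ref{ii9}) induces a subgraph isomorphic to $\hglnminus1$, which is connected by Lemma~\ref{lemma_povezan}. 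The number of one-dimensional subspaces of $\FFq2^n$ is $\frac{q^{2n}-1}{q^2-1}$, and distinct kernels give disjoint sets~(\ref{ii9}); moreover, every rank-$(n-1)$ hermitian matrix has a one-dimensional kernel and hence lies in exactly one such set, so the sets~(\ref{ii9}) partition the vertex set of the subgraph into $\frac{q^{2n}-1}{q^2-1}$ pieces, each of which is connected.

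It remains to check that there are no edges of the subgraph between two different sets~(\ref{ii9}); this is the heart of the argument. Suppose $N_1$ and $N_2$ are adjacent rank-$(n-1)$ hermitian matrices. I want to show they have the same kernel. Write $N_1=P(I_{n-1}\oplus 0)P^\ast$ by~(\ref{i8}), so $\ker N_1=\FFq2\cdot(P^{-1})^\ast{\bf e}_n$, and $N_2=N_1+{\bf x}{\bf x}^\ast$ for some ${\bf x}$. Setting $P^{-1}{\bf x}=\dot{\bf x}=(\dot x_1,\ldots,\dot x_n)^{\tr}$, Lemma~\ref{pomozna} forces $\dot x_n=0$ (otherwise $\rank N_2$ would be $n$, not $n-1$); this is exactly the computation already carried out in the proof of Lemma~\ref{pomozna2}. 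But $\dot x_n=0$ says that ${\bf x}\in P\langle{\bf e}_1,\ldots,{\bf e}_{n-1}\rangle$, so the column space of ${\bf x}{\bf x}^\ast$, and hence that of $N_2=N_1+{\bf x}{\bf x}^\ast$, is contained in $P\langle{\bf e}_1,\ldots,{\bf e}_{n-1}\rangle$, whence $(P^{-1})^\ast{\bf e}_n$ annihilates $N_2$ on the left as well; since $N_2$ is hermitian and of rank $n-1$, its kernel is therefore $\FFq2\cdot(P^{-1})^\ast{\bf e}_n=\ker N_1$. (Alternatively one invokes Lemma~\ref{pomozna2} directly: it shows any leaf of $N_1$ is carried to a leaf of $N_2$, and the leaves of a rank-$(n-1)$ hermitian matrix are generated precisely by the vectors outside its kernel, which pins down the kernel.) Thus adjacent rank-$(n-1)$ matrices lie in the same set~(\ref{ii9}).

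Putting the pieces together: the sets~(\ref{ii9}) are pairwise disjoint, they cover all rank-$(n-1)$ hermitian matrices, each is connected (being isomorphic to the connected graph $\hglnminus1$), and no edge joins two of them; hence they are exactly the connected components, there are $\frac{q^{2n}-1}{q^2-1}$ of them, and each is isomorphic to $\hglnminus1$. The only step requiring real care is the no-edge-between-components claim, and for that the work is already done in Lemmas~\ref{pomozna} and~\ref{pomozna2}; the remaining ingredients are the standard count of projective points and the connectivity supplied by Lemma~\ref{lemma_povezan}.
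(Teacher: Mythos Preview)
Your proof is correct and follows essentially the same approach as the paper's: both use Lemma~\ref{pomozna} to show that an adjacency between rank-$(n-1)$ matrices forces the perturbing vector to have $\dot{x}_n=0$ (so the two matrices lie in the same set~(\ref{ii9})), invoke Lemma~\ref{lemma_povezan} for connectedness of each set, and count via the Gaussian binomial coefficient. The only cosmetic difference is that you parameterize the components by their common one-dimensional kernel, whereas the paper parameterizes them by the common $(n-1)$-dimensional column span; these are dual descriptions and the counts $\left[\begin{smallmatrix}n\\1\end{smallmatrix}\right]_{q^2}=\left[\begin{smallmatrix}n\\n-1\end{smallmatrix}\right]_{q^2}$ agree.
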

\begin{proof}
Let $A\in\h$ be of rank $n-1$. Then there exist an invertible matrix~$P$ and a matrix $\dot{A}\in \hglnminus1$ such that $A=P(\dot{A}\oplus 0)P^{\ast}$. Assume that $B$ is of rank $n-1$ and it is adjacent to $A$, that is, there exists a vector ${\bf x}$ such that
\begin{equation}\label{ii7}
B=A+{\bf x}{\bf x}^{\ast}=P(\dot{A}\oplus 0+{\bf w}{\bf w}^{\ast})P^{\ast},
\end{equation}
where $P^{-1}{\bf x}=:{\bf w}=(w_1,\ldots,w_n)^{\tr}$.
Since $\rank B=n-1$, Lemma~\ref{pomozna} shows that $w_n=0$, so (\ref{ii7}) implies that
$B=P(\dot{B}\oplus 0)P^{\ast}$ for some $\dot{B}\in\hglnminus1$. Hence, the component which contains $A$ contains at most those matrices from the set~(\ref{ii9}). However, $\hglnminus1$ is connected  by Lemma~\ref{lemma_povezan}, so the component of $A$ is truly the whole set~(\ref{ii9}), which equals
$$\left\{\sum_{i=1}^{n-1} {\bf  x}_i{\bf x}_i^{\ast}\ :\ \{{\bf  x}_1,\ldots,{\bf  x}_{n-1}\}\
\textrm{is a basis of}\ V\right\},$$
where $V$ is the linear span of vectors $P{\bf  e}_1,\ldots, P{\bf  e}_{n-1}$.
Consequently, the map
$$V\mapsto \left\{\sum_{i=1}^{n-1} {\bf  x}_i{\bf x}_i^{\ast}\ :\ \{{\bf  x}_1,\ldots,{\bf  x}_{n-1}\}\
\textrm{is a basis of}\ V\right\}$$
is a bijection between the set of all $n-1$ dimensional subspaces in $\FF_{q^{2}}^n$ and the set of all components.
Hence, the number of all components equals the number of all $n-1$ dimensional subspaces, which is given by Gaussian binomial coefficient
$$\begin{bmatrix}n\\
n-1\end{bmatrix}_{q^{2}}=\frac{(q^2)^n-1}{q^2-1}.$$
An isomorphism from $\hglnminus1$ to component~(\ref{ii9}) is  $\dot{X}\mapsto P(\dot{X}\oplus 0)P^{\ast}$.
\end{proof}
\begin{remark}
Similar proof as above shows that any connected component of the subgraph in $\h$, which is induced by matrices of rank $k$ $(1\leq k\leq n-2)$, is of the form
$\{P(\dot{X}\oplus 0_{n-k})P^{\ast}\ :\  \dot{X}\in\mathcal{HGL}_{k}(\FFq2)\}$. There are  $\left[\begin{smallmatrix}n\\
k\end{smallmatrix}\right]_{q^{2}}$ such components.
\end{remark}
\begin{remark}
The assumption $q\geq 3$ in Lemma~\ref{lemma11} is needed just because Lemma~\ref{lemma_povezan} is used in the proof. In \cite{petersen_coxeter} the analogous result of Lemma~\ref{lemma_povezan} is proved for $q=2$, so the conclusion of Lemma~\ref{lemma11} is valid for any $q$.
\end{remark}

Let $q\geq 3$, $1\leq r\leq n$, and assume $U\subseteq \h$ is a subset of all matrices of rank~$r$. Let
$f$ be a flower of some matrix of rank $r-1$ and let $\Phi: U \to U$ be a bijection that preserves adjacency in both directions. We say that $\Phi$ \emph{preserves flower $f$} if $\Phi(f)$ is a flower of some matrix of rank $r-1$. We say that $\Phi$ \emph{disintegrates flower $f$} if, for any two  leaves ${\cal L}_i\neq {\cal L}_j$ of $f$, $\Phi({\cal L}_i)$ and $\Phi({\cal L}_j)$ are leaves in two distinct flowers of matrices of rank $r-1$. Given a connected component ${\cal C}$ of the subgraph of $\h$, which is induced by matrices of rank $n-1$, we say that $\Phi$ \emph{preserves ${\cal C}$}, if it preserves all flowers of matrices in ${\cal C}$.
We say that $\Phi$ \emph{disintegrates~${\cal C}$}, if it disintegrates all flowers  of matrices in ${\cal C}$.

\begin{lemma}\label{lemma44}
Let $q\geq 4$. A bijection $\Phi: \hgl \to\hgl$ that preserve adjacency in both directions either preserves all flowers of matrices of rank $n-1$ or disintegrates all flowers of matrices of rank $n-1$.
\end{lemma}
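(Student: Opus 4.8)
The plan is to prove the statement one flower at a time, and then to propagate the resulting local dichotomy through the whole of $\hgl$.

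\textbf{One flower.} Fix a flower $f$ of a rank-$(n-1)$ matrix $N$, with its $q^{2(n-1)}$ leaves ${\cal L}$. By Lemma~\ref{list_v_list} applied with $r=n$, each $\Phi({\cal L})$ is a leaf of a unique rank-$(n-1)$ matrix $c_{\cal L}$. Since distinct leaves are distinct subsets of $\hgl$ and $\Phi$ is injective, the leaves $\Phi({\cal L})$ are again distinct, so if all the $c_{\cal L}$ equal one matrix $M$ then the $q^{2(n-1)}$ leaves $\Phi({\cal L})$ are exactly the $q^{2(n-1)}$ leaves of $M$ and $\Phi$ preserves $f$; and $\Phi$ disintegrates $f$ exactly when the $c_{\cal L}$ are pairwise distinct. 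So it suffices to exclude the mixed case $c_{{\cal L}_1}=c_{{\cal L}_2}\neq c_{{\cal L}_3}$. Applying Lemma~\ref{lemma7} to the pairs $\{{\cal L}_1,{\cal L}_3\}$ and $\{{\cal L}_2,{\cal L}_3\}$ — both are pairs of leaves of the common matrix $N$ whose images are leaves of no common matrix — the generator of $\Phi({\cal L}_3)$ is linearly dependent on the generator of $\Phi({\cal L}_1)$ and also on that of $\Phi({\cal L}_2)$, so the generators of $\Phi({\cal L}_1)$ and $\Phi({\cal L}_2)$ are linearly dependent. But two leaves of the same matrix $c_{{\cal L}_1}$ with linearly dependent generators coincide, contradicting injectivity of $\Phi$. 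Thus every individual flower of rank $n-1$ is either preserved or disintegrated.

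\textbf{Flowers over one component.} Next I would show that flowers over adjacent centres have the same type. Suppose not; after swapping if necessary, $\Phi$ preserves the flower $f$ of $N$ but disintegrates the flower $f'$ of $N'=N+{\bf z}{\bf z}^{\ast}$ with $\rank(N-N')=1$. By Lemma~\ref{pomozna2} every admissible direction ${\bf y}$ yields leaves ${\cal L}({\bf y})=\{N+\lambda{\bf y}{\bf y}^{\ast}\}$ of $N$ and ${\cal L}'({\bf y})=\{N'+\lambda{\bf y}{\bf y}^{\ast}\}$ of $N'$, with entries of equal $\lambda$ adjacent and entries of unequal $\lambda$ at rank $2$; since $\Phi$ preserves adjacency in both directions, the rank-one relation between $\Phi({\cal L}({\bf y}))$ and $\Phi({\cal L}'({\bf y}))$ is a bijective matching, so Lemma~\ref{lemma10} applies. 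Preservation of $f$ makes all $\Phi({\cal L}({\bf y}))$ leaves of one matrix $M$, with pairwise independent generators $g({\bf y})$; disintegration of $f'$ together with Lemma~\ref{lemma7} forces all $\Phi({\cal L}'({\bf y}))$ to share one generator direction ${\bf w}$ and to have pairwise rank-$2$ centres. Hence case (ii) of Lemma~\ref{lemma10} occurs for at most one ${\bf y}$, and for every other ${\bf y}$ case (i) gives $c(\Phi({\cal L}'({\bf y})))=M-a({\bf y})g({\bf y}){\bf w}^{\ast}-\inv{a({\bf y})}{\bf w}g({\bf y})^{\ast}$ with $a({\bf y})\neq0$. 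As ${\bf y}$ varies, $g({\bf y})$ runs through all $q^{2(n-1)}$ admissible leaf-directions of $M$; requiring that each of these centres has rank $n-1$, that $\{c(\Phi({\cal L}'({\bf y})))+\beta{\bf w}{\bf w}^{\ast}:\beta\in\FF\}$ be a leaf lying inside $\hgl$, and that the centres be pairwise at rank $2$ is impossible for $q\geq4$ — the column spaces forced by these formulas cannot absorb so many distinct directions $g({\bf y})$. This contradiction shows that adjacent centres give flowers of the same type, and since each connected component of the subgraph of $\h$ induced by rank-$(n-1)$ matrices is connected (Lemma~\ref{lemma11}), $\Phi$ has one uniform type on all flowers over a fixed component.

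\textbf{Across components, and the obstacle.} Finally I would compare two components, using the inverse map $\iota\colon A\mapsto A^{-1}$: it is a bijection of $\hgl$ preserving adjacency in both directions, it disintegrates every flower (Corollary~\ref{lemma22}), and by Lemma~\ref{lemma6}(i) together with Corollary~\ref{cor2} it carries the flower of a rank-$(n-1)$ matrix to a family of leaves meeting every component except those whose $(n-1)$-dimensional subspace contains one fixed direction. Combining this with Lemma~\ref{lemma7} gives the toggle: for every flower $f$, $\Phi$ preserves $f$ iff $\iota\circ\Phi$ disintegrates $f$, and $\Phi$ disintegrates $f$ iff $\iota\circ\Phi$ preserves $f$. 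If $\Phi$ preserved all flowers over one component and disintegrated all flowers over another, then by the previous step and injectivity $\Phi$ would carry the leaves over the first component \emph{onto} the leaves over a single component, while the description of $\iota\circ\Phi$ on a suitably chosen flower over the second component would place one of its leaves inside that image — a contradiction, provided a short count shows such a flower exists (the forbidden direction supplied by Corollary~\ref{cor2} cannot lie in the fixed subspace for all flowers over one component). I expect the rank-and-dimension bookkeeping of the second step to be the real difficulty: forcing a genuine contradiction out of the explicit formula for $c(\Phi({\cal L}'({\bf y})))$ must simultaneously exploit $\rank M=n-1$, the invertibility of every matrix in the leaves $\Phi({\cal L}'({\bf y}))$, and the pairwise rank-$2$ condition; and the case $n=2$, where a plane already has room for all the directions $g({\bf y})$, must be settled by using invertibility directly rather than by counting.
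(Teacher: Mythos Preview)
Your local dichotomy for a single flower is correct and is in fact a pleasant variant of the paper's Step~1: the paper applies Lemma~\ref{lemma7} to $\{{\cal L}_1,{\cal L}_3\}$ and $\{{\cal L}_2,{\cal L}_3\}$ to force $\Phi({\cal L}_1)^{-1},\Phi({\cal L}_2)^{-1}$ to lie in a common flower, and then contradicts Corollary~\ref{lemma22}; you instead use the ``linearly dependent generators'' clause of Lemma~\ref{lemma7} directly, which is shorter.

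The genuine gap is in your second step. You correctly arrive at the formula
\[
c\bigl(\Phi({\cal L}'({\bf y}))\bigr)=M-a({\bf y})\,g({\bf y}){\bf w}^{\ast}-\inv{a({\bf y})}\,{\bf w}\,g({\bf y})^{\ast},
\]
but the sentence ``the column spaces forced by these formulas cannot absorb so many distinct directions $g({\bf y})$'' is not a proof, and you yourself flag this as ``the real difficulty'' and leave $n=2$ open. There is no evident dimension obstruction here: each of these matrices differs from $M$ by a rank-two piece living in the plane $\langle g({\bf y}),{\bf w}\rangle$, and as ${\bf y}$ varies these planes vary freely, so nothing like a fixed column space is being overloaded. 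The paper's Step~2 does not attempt a direct contradiction from this formula. Instead it first uses a counting argument (there are $q^{2(n-1)}$ centres $M_j$ but only $\frac{q^{2(n-1)}-1}{q^2-1}$ available rank-one directions off $M$) to locate some $M_j$ with $\rank(M-M_j)\neq 1$; then Lemma~\ref{lemma10} forces $M_j-M=a{\bf x}{\bf z}^{\ast}+\inv{a}{\bf z}{\bf x}^{\ast}$, which after a change of basis puts $M$ and $M_j$ in the form~(\ref{ii3}). Corollary~\ref{cor2} then manufactures a \emph{third} flower $f_3$, necessarily disintegrated, whose image under $A\mapsto A^{-1}\circ\Phi$ already fills a complete flower that would have to contain one extra leaf coming from $f_1$ --- the contradiction.

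Your third step is also only a sketch. The paper's Step~3 is a clean majority argument you have not found: if $\Phi$ disintegrates one flower, then Corollary~\ref{cor2} and Lemma~\ref{lemma11} place the $q^{2(n-1)}$ image-centres in $q^{2(n-1)}$ \emph{distinct} connected components, so $\Phi$ disintegrates more than half of the $\frac{q^{2n}-1}{q^2-1}<2q^{2(n-1)}$ components; the toggle $\Phi\leftrightarrow\Upsilon\circ\Phi$ then rules out any preserved component.
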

\begin{proof} We split the proof in three steps.
\begin{myenumerate}{Step}
\item $\Phi$ either preserves or disintegrates any flower of a matrix of rank $n-1$.

Suppose that the claim  is false for some flower $f$. Then there exist distinct leaves ${\cal L}_1$, ${\cal L}_2$, ${\cal L}_3$ of $f$ and distinct  matrices $M$, $N$ of rank $n-1$ such that
$\Phi({\cal L}_1)$ and $\Phi({\cal L}_2)$ are both leaves of  $M$, while $\Phi({\cal L}_3)$ is a leaf of  $N$.
By Lemma~\ref{lemma7}, $\Phi({\cal L}_1)^{-1}$ and $\Phi({\cal L}_3)^{-1}$ are leaves of a common matrix of rank $n-1$. The same is true for leaves $\Phi({\cal L}_2)^{-1}$ and $\Phi({\cal L}_3)^{-1}$. This is a contradiction since, by Corollary~\ref{lemma22},  $\Phi({\cal L}_1)^{-1}$ and $\Phi({\cal L}_2)^{-1}$ are leaves of two distinct matrices of rank $n-1$.

\item  Let ${\cal C}$ be a connected component of the subgraph in $\h$, which is induced by matrices of rank $n-1$. Then $\Phi$  either preserves or disintegrates~${\cal C}$.

Suppose the claim is false. Then there are two flowers $f_1$ and $f_2$ of matrices $N_1$ and $N_2$ respectively such that $ N_i\in {\cal C}$, $\rank(N_1-N_2)=1$, and $\Phi$ preserves $f_1$ while it disintegrates $f_2$. From Section~\ref{2} we know that flower $f_i$ consists of $q^{2(n-1)}$ leaves ${\cal L}_1^{(i)},\ldots,{\cal L}_{q^{2(n-1)}}^{(i)}$. By Lemma~\ref{lemma7} there are matrices $M_1,\ldots,M_{q^{2(n-1)}}$ of rank~$n-1$ and a column vector ${\bf x}\neq 0$ such that $\Phi({\cal L}_j^{(2)})=\{M_j+\lambda {\bf x}{\bf x}^{\ast} : 0\neq \lambda\in \FF\}$ and $\rank(M_j-M_k)=2$ for $j\neq k$.
Let $M$, with $\rank M=n-1$, be the matrix of flower~$\Phi(f_1)$. Since $\Phi$ is bijective, $\Phi({\cal L}_j^{(2)})$ is not a leaf in $\Phi(f_1)$, so $M\neq M_j$ for all $j$.

We claim that $\rank(M-M_j)\neq 1$ for at least some $j$. The opposite would imply that $M_j=M+{\bf x}_j{\bf x}_j^{\ast}$ for some ${\bf x}_j\in \FFq2^n$ for all $j$. Since $\rank(M_j-M_k)=2$ for $j\neq k$, column vectors ${\bf x}_1,\ldots,{\bf x}_{q^{2(n-1)}}$ must be pairwise linearly independent. Moreover, none of the sets $\{M+\lambda {\bf x}_j{\bf x}_j^{\ast}\ :\ 0\neq\lambda\in\FF\}$ is a leaf of $M$, since $\rank M_j<n$. On the contrary, $\FFq2^n$ contains $\frac{q^{2n}-1}{q^2-1}$ pairwise linearly independent column vectors. By excluding those $q^{2(n-1)}$ column vectors that generate leaves of $M$, we obtain a number $\frac{q^{2n}-1}{q^{2}-1}-q^{2(n-1)}=\frac{q^{2(n-1)}-1}{q^{2}-1}$, which is smaller than $q^{2(n-1)}=|\{{\bf x}_1,\ldots,{\bf x}_{q^{2(n-1)}}\}|$,
a contradiction.

Now pick $M_j$ such that $\rank (M-M_j)\neq 1$ and choose ${\bf y}$ such that ${\cal L}_j^{(2)}=\{N_2+\lambda {\bf y}{\bf y}^{\ast}\ :\ 0\neq \lambda\in \FF\}$. By Lemma~\ref{pomozna2}, ${\cal L}:=\{N_1+\lambda {\bf y}{\bf y}^{\ast}\ :\ 0\neq \lambda\in \FF\}$ is a leaf in~$f_1$. Pick ${\bf z}$ such that $\Phi({\cal L})=\{M+\lambda {\bf z}{\bf z}^{\ast}\ :\ 0\neq \lambda\in \FF\}$. Since ${\cal L}$ and ${\cal L}_j^{(2)}$ satisfies condition~(\ref{eq10}), the same holds for $\Phi({\cal L})$ and $\Phi({\cal L}_j^{(2)})$. Since $\rank (M-M_j)\neq 1$, Lemma~\ref{lemma10} implies that $M_j-M=a{\bf x}{\bf z}^{\ast}+\inv{a}{\bf z}{\bf x}^{\ast}$ for some nonzero $a$. To continue write $M=\sum_{i=1}^{n-1} {\bf z}_i{\bf z_i}^{\ast}$ and choose the invertible matrix $P$ with ${\bf z}_i$ as the $i$-th column and ${\bf z}$ as the last column. If ${\bf w}:=P^{-1}{\bf x}$ then
$$M_j=M+a{\bf x}{\bf z}^{\ast}+\inv{a}{\bf z}{\bf x}^{\ast}=P
\begin{bmatrix}
1& & &aw_1\\
&\ddots& &\vdots\\
& & 1& aw_{n-1}\\
\inv{aw_1}&\cdots& \inv{aw_{n-1}}&aw_n+\inv{aw_n}
\end{bmatrix}P^{\ast},$$
where ${\bf w}=:(w_1\ldots,w_{n})^{\tr}$. Since $P^{-1}M_j (P^{-1})^{\ast}$ is of rank $n-1$, with first $n-1$ columns linearly independent, it follows that the last column of this matrix is a linear combination of others, that is, $aw_n+\inv{aw_n}=\sum_{i=1}^{n-1}aw_i\inv{aw_i}$. Hence,
$$M_j=P\begin{bmatrix}
I_{n-1}& {\bf u}\\
{\bf u}^{\ast}& {\bf u}^{\ast}{\bf u}
\end{bmatrix}P^{\ast}$$
for ${\bf u}:=(aw_1,\ldots,aw_{n-1})^{\tr}$, while
$$M=P\begin{bmatrix}
I_{n-1}& 0\\
0& 0
\end{bmatrix}P^{\ast}.$$
By Corollary~\ref{cor2} there are two leaves, one of $M_j$ and one of $M$, with inverses that share a common matrix of rank $n-1$. More precisely, there exist a leaf ${\cal L}_i^{(1)}$ in $f_1$, a flower $f_3$ of some matrix $N_3$ of rank $n-1$ with leaves ${\cal L}_1^{(3)},\ldots,{\cal L}_{q^{2(n-1)}}^{(3)}$, and index $k$ such that $\Phi({\cal L}_k^{(3)})$ is a leaf of $M_j$, while $\Phi\big({\cal L}_k^{(3)}\big)^{-1}$ and $\Phi\big({\cal L}_i^{(1)}\big)^{-1}$ are leaves in the same flower. Since $\Phi({\cal L}_k^{(3)})$ and $\Phi({\cal L}_j^{(2)})$ are leaves in the same flower and $\Phi$ is bijective, $f_3$ is as $f_2$ disintegrated by $\Phi$. Hence, $N_3\neq N_1$ and
$\{\Phi\big({\cal L}_1^{(3)}\big)^{-1},\ldots,\Phi\big({\cal L}_{q^{2(n-1)}}^{(3)}\big)^{-1}\}$ is a flower by Lemma~\ref{lemma7}. Since $\Phi\big({\cal L}_i^{(1)}\big)^{-1}$ is an additional leaf of this flower, we get a contradiction.

\item We are now able to end the proof.

Assume that $\Phi$ disintegrates some flower of a matrix of rank $n-1$. Then, by Step~1, the same holds for inverse $\Phi^{-1}$. In fact, if ${\cal L}$ is a leaf of $f$, which is disintegrated by $\Phi$, then $\Phi({\cal L})$ is a leaf of a flower, denoted by $g$, which is disintegrated by $\Phi^{-1}$. Let  ${\cal L}_1,\ldots,{\cal L}_{q^{2(n-1)}}$ be the leaves of $g$ and let $M_i$  be the matrix of rank~$n-1$ of leaf $\Phi^{-1}({\cal L}_i)$.

We claim that matrices $M_1,\ldots, M_{q^{2(n-1)}}$ are in different connected component of the subgraph in $\h$, which is induced by matrices of rank $n-1$. Let $i\neq j$. By Lemma~\ref{lemma7}, $(\Phi^{-1}({\cal L}_i))^{-1}$ and $(\Phi^{-1}({\cal L}_j))^{-1}$ are leaves of a common matrix of rank $n-1$. By Corollary~\ref{cor2},
$$M_i=P\begin{bmatrix}
I_{n-1}& {\bf y}_i\\
{\bf y}_i^{\ast}& {\bf y}_i^{\ast}{\bf y}_i
\end{bmatrix}P^{\ast}\quad \textrm{and} \quad M_j=P\begin{bmatrix}
I_{n-1}& {\bf y}_j\\
{\bf y}_j^{\ast}& {\bf y}_j^{\ast}{\bf y}_j
\end{bmatrix}P^{\ast}.$$
for some invertible $P$ and column vectors ${\bf y}_i\neq {\bf y}_j$.
 Consequently, $$M_i=Q\begin{bmatrix}
I_{n-1}& 0\\
0& 0
\end{bmatrix}Q^{\ast}\quad \textrm{and} \quad M_j=Q\begin{bmatrix}
I_{n-1}& {\bf y}_j-{\bf y}_i\\
{\bf y}_j^{\ast}-{\bf y}_i^{\ast}& ({\bf y}_j-{\bf y}_i)^{\ast}({\bf y}_j-{\bf y}_i)
\end{bmatrix}Q^{\ast},$$
where
$$Q:=P\begin{bmatrix}
I_{n-1}& 0\\
{\bf y}_i^{\ast}& 1
\end{bmatrix}.$$
Since ${\bf y}_j-{\bf y}_i\neq 0$, $M_i$ and $M_j$ are in distinct components by Lemma~\ref{lemma11}.

Consequently, since $\Phi$ disintegrates flowers of all $M_1,\ldots,M_{q^{2(n-1)}}$, Step~2 implies that $\Phi$ disintegrates at least $q^{2(n-1)}$ components. By Lemma~\ref{lemma11} there are
$\frac{q^{2n}-1}{q^2-1}< 2\cdot q^{2(n-1)}$  components in total. So if ${\mathfrak d}_\Phi$ denotes the number of components that are disintegrated by~$\Phi$ and ${\mathfrak p}_\Phi$ denotes the number of components that are preserved by~$\Phi$, then ${\mathfrak d}_\Phi > {\mathfrak p}_\Phi$. Let $\Upsilon(A):=A^{-1}$. Since $\Upsilon$ is bijective and preserves adjacency in both directions, Corollary~\ref{lemma22} and Lemma~\ref{lemma7} imply that ${\mathfrak p}_\Phi={\mathfrak d}_{\Upsilon\circ \Phi}$ and ${\mathfrak d}_\Phi={\mathfrak p}_{\Upsilon\circ \Phi}$. If $\Phi$ preserves some flower, that is, if ${\mathfrak p}_\Phi={\mathfrak d}_{\Upsilon\circ \Phi}>0$, then a symmetrical argument used for the map $\Upsilon\circ \Phi$ shows that ${\mathfrak d}_{\Upsilon\circ \Phi} > {\mathfrak p}_{\Upsilon\circ \Phi}$. Hence, ${\mathfrak p}_\Phi > {\mathfrak d}_\Phi$, a contradiction. Therefore $\Phi$ disintegrates all flowers.\qedhere
\end{myenumerate}
\end{proof}

We are now able to prove Theorem~\ref{thm1}.

\begin{proof}[Proof of Theorem~\ref{thm1}]
It is obvious that  maps in~(\ref{ii10}) are bijective and preserve adjacency in both directions.
We now prove that no other exists. Assume that $\Phi: \hgl\to\hgl$ is bijective and preserves adjacency in both directions. Denote the map $A\mapsto A^{-1}$ by $\Upsilon$.
By Lemma~\ref{lemma44}, $\Phi$ either preserves all flowers or disintegrates all flowers of matrices of rank $n-1$. In the first case we define $\Psi:=\Phi$, while in the second case we set $\Psi:=\Upsilon\circ \Phi$. An application of Lemma~\ref{lemma7} shows that $\Psi$ preserves all flowers of matrices of rank $n-1$. In the next three steps we show that  $\Psi$ can be bijectively extended to a map ${\cal H}_n(\mathbb{F}_{q^2})\to {\cal H}_n(\mathbb{F}_{q^2})$ that preserves adjacency (in both directions).
\begin{myenumerate}{Step}
\item We first extend $\Psi$ bijectively on matrices of rank $n-1$ in such way that $\Psi$ obeys the rule
\begin{equation}\label{eq12}
\rank(M_1-M_2)=1, \rank M_1 =n, \rank M_2=n-1\Longrightarrow \rank\big(\Psi(M_1)-\Psi(M_2)\big)=1.
\end{equation}

If $M\in {\cal H}_n(\mathbb{F}_{q^2})$ is of rank $n-1$, $f$ is its flower, and $N$ is the matrix of rank~$n-1$ of the flower $\Psi(f)$, then we set $\Psi(M):=N$. By the construction, $\Psi$ satisfies~(\ref{eq12}) and is bijective on matrices of rank $\geq n-1$.

\item If $1\leq r\leq n-1$, $\Psi$ is defined on matrices of rank $\geq r$, it preserves adjacency, and for all $r \leq k\leq n-1$ is bijective on the subset of matrices of rank~$k$, then we extend $\Psi$ bijectively on matrices of rank $r-1$ such that $\Psi$ satisfies
\begin{equation*}\label{eq13}
\rank(M_1-M_2)=1, \rank M_1 =r, \rank M_2=r-1\Longrightarrow \rank\big(\Psi(M_1)-\Psi(M_2)\big)=1.
\end{equation*}

It suffices to show that $\Psi$ preserves flowers of matrices of rank $r-1$. Then we can define $\Psi$ on matrices of rank $r-1$ in analogous way as in Step~1.

Let $A$ be of rank $r-1$ and let $\{A+\lambda {\bf x}{\bf x}^{\ast}\, :\, 0\neq \lambda\in\FF\}$, $\{A+\lambda {\bf y}{\bf y}^{\ast}\, :\, 0\neq \lambda\in\FF\}$ be two leaves of $A$. By Lemma~\ref{list_v_list}, their $\Psi$-images are leaves of some matrices of rank $r-1$, denoted by $M_{\bf x}$ and $M_{\bf y}$ respectively. We need to show that $M_{\bf x}=M_{\bf y}$. Write $A=\sum_{i=1}^{r-1} {\bf u}_i{\bf u}_i^{\ast}$, where ${\bf u}_1\ldots,{\bf u}_{r-1}$ are linearly independent.

We first assume that ${\bf u}_1\ldots,{\bf u}_{r-1}, {\bf x}, {\bf y}$ are linearly independent.
Then $\rank(A+\lambda {\bf x}{\bf x}^{\ast}+\mu {\bf y}{\bf y}^{\ast})=r+1$ for all nonzero $\lambda$ and $\mu$.   For $\lambda\neq 0$ let $M_{\lambda}:=\Psi(A+\lambda {\bf x}{\bf x}^{\ast})$. Leaf $\{A+\nu {\bf y}{\bf y}^{\ast}\, :\, 0\neq \nu\in\FF\}$ of $A$ and leaf $\{A+\lambda {\bf x}{\bf x}^{\ast}+\nu {\bf y}{\bf y}^{\ast}\, :\, 0\neq \nu\in\FF\}$ of $A+\lambda {\bf x}{\bf x}^{\ast}$ satisfy condition (\ref{eq10}) in Lemma~\ref{lemma10}. The same holds for their $\Psi$-images, which are of the form $\{M_{\bf y}+\nu {\bf z}{\bf z}^{\ast}\, :\, 0\neq \nu\in\FF\}$ and $\{M_{\lambda}+\nu {\bf x}_{\lambda}{\bf x}_{\lambda}^{\ast}\, :\, 0\neq \nu\in\FF\}$ respectively. Hence, Lemma~\ref{lemma10} implies that  $\rank(M_{\bf y}-M_{\lambda})\in\{1,2\}$. Since $q\geq 4$, it suffices to consider the next two cases.
\begin{myenumerate2}{Case}
\item There exist $\lambda\neq \mu$ such that $\rank(M_{\bf y}-M_{\lambda})=2=\rank(M_{\bf y}-M_{\mu})$.

Then, by Lemma~\ref{lemma10},
$M_{\bf y}-M_{\lambda}=a{\bf z}{\bf x}_{\lambda}^{\ast}+\inv{a}{\bf x}_{\lambda}{\bf z}^{\ast}$ and $M_{\bf y}-M_{\mu}=b{\bf z}{\bf x}_{\mu}^{\ast}+\inv{b}{\bf x}_{\mu}{\bf z}^{\ast}$ for some nonzero $a$ and $b$. Since leaves $\{A+\lambda {\bf x}{\bf x}^{\ast}+\nu {\bf y}{\bf y}^{\ast}\, :\, 0\neq \nu\in\FF\}$ and $\{A+\mu {\bf x}{\bf x}^{\ast}+\nu {\bf y}{\bf y}^{\ast}\, :\, 0\neq \nu\in\FF\}$ satisfy condition (\ref{eq10}), the same holds for their $\Psi$-images. Since $M_{\lambda}$ and $M_{\mu}$ are adjacent, it follows from Lemma~\ref{lemma10} that ${\bf x}_{\lambda}$ and ${\bf x}_{\mu}$ are linearly dependent, that is, ${\bf x}_{\mu}=c{\bf x}_{\lambda}$ for some $c$. Consequently, $M_{\mu}-M_{\lambda}=(a-b\inv{c}){\bf z}{\bf x}_{\lambda}^{\ast}+\inv{(a-b\inv{c})}{\bf x}_{\lambda}{\bf z}^{\ast}$ is of rank two or zero, a contradiction.

\item There exist $\lambda\neq \mu$ such that $\rank(M_{\bf y}-M_{\lambda})=1=\rank(M_{\bf y}-M_{\mu})$.

 Then (\ref{ii1}) implies that $M_{\bf y}$ is adjacent to all matrices in $\Psi(\{A+\nu {\bf x}{\bf x}^{\ast} : 0\neq \nu\in\FF\})$, which, by Lemma~\ref{list_v_list}, is a leaf of unique matrix of rank $r-1$, so
       $M_{\bf y}=M_{\bf x}$.
\end{myenumerate2}

Assume now that ${\bf u}_1\ldots,{\bf u}_{r-1}, {\bf x}, {\bf y}$ are linearly dependent. It is obvious that
${\bf u}_1, \ldots, {\bf u}_{r-1}, {\bf x}$  as well as ${\bf u}_1, \ldots, {\bf u}_{r-1}, {\bf y}$ are linearly independent. So if ${\bf v}$ is such that ${\bf u}_1\ldots,{\bf u}_{r-1}, {\bf x}, {\bf v}$ are linearly independent, then ${\bf u}_1\ldots,{\bf u}_{r-1}, {\bf y}, {\bf v}$ are also linearly independent. Let $M_{\bf v}$ be the matrix of rank $r-1$ of leaf $\Psi(\{A+\lambda {\bf v}{\bf v}^{\ast} : 0\neq \lambda\in\FF\})$. The proof above shows that $M_{\bf x}=M_{\bf v}$ and $M_{\bf y}=M_{\bf v}$, i.e., $M_{\bf x}=M_{\bf y}$.

\item If $2\leq r\leq n$, then $\Psi$ from the conclusion of Step~2/Step~1 satisfies
\begin{equation*}\label{eq14}
\rank(M_1-M_2)=1, \rank M_1 =r-1, \rank M_2=r-1\Longrightarrow \rank\big(\Psi(M_1)-\Psi(M_2)\big)=1.
\end{equation*}

Assume that $\rank(M_1-M_2)=1$, $\rank M_1 =r-1$, $\rank M_2=r-1$. From the construction of $\Psi$ in Step~2/Step~1 we deduce that the $\Psi$-image of any leaf $\{M_i+\lambda {\bf w}{\bf w}^{\ast}\, :\, 0\neq \lambda\in\FF\}$ of $M_i$ is of the form $\{\Psi(M_i)+\lambda h_i({\bf w})h_i({\bf w})^{\ast}\, :\, 0\neq \lambda\in\FF\}$ for some function $h_i$. Moreover, the set of leaves of $M_i$ is bijectively mapped onto the set of leaves of $\Psi(M_i)$. If $\{M_1+\lambda {\bf w}{\bf w}^{\ast}\, :\, 0\neq \lambda\in\FF\}$ is any leaf of $M_1$, then $\{M_2+\lambda {\bf w}{\bf w}^{\ast}\, :\, 0\neq \lambda\in\FF\}$ is a leaf of $M_2$ by Lemma~\ref{pomozna2}. These two leaves satisfy~(\ref{eq10}), so the same hold for their $\Psi$-images. Assume erroneously that  $\rank\big(\Psi(M_1)-\Psi(M_2)\big)\neq 1$. Then  Lemma~\ref{lemma10} implies that
\begin{equation}\label{ii16}
\Psi(M_1)-\Psi(M_2)=a_{{\bf w}} h_1({\bf w})h_2({\bf w})^{\ast}+\inv{a}_{{\bf w}}  h_2({\bf w})h_1({\bf w})^{\ast}
\end{equation}
for some nonzero scalar $a_{{\bf w}}$.

If $n\geq 3$, then it is easy to see that there are three leaves of $\Psi(M_1)$ that are generated by three linearly independent vectors $h_1(\dot{{\bf w}}), h_1(\ddot{{\bf w}}), h_1(\dddot{{\bf w}})$. However, in that case the three matrices (\ref{ii16}), obtained by choosing ${\bf w}\in \{\dot{{\bf w}},\ddot{{\bf w}},\dddot{{\bf w}}\}$, cannot be equal, a contradiction.

If $n=2$, then $r=2$ and $\rank M_i=1=\rank \Psi(M_i)$. In particular, $\Psi(M_i)={\bf v}_i{\bf v}_i^{\ast}$, where  ${\bf v}_1$ and ${\bf v}_2$ are linearly independent. By (\ref{i8}) there is an invertible $2\times 2$ matrix~$Q$ such that $\Psi(M_1)-\Psi(M_2)=QQ^{\ast}$. Let ${\bf e}_1:=(1,0,\ldots,0)^{\tr}$. At least one of the pairs $\{Q{\bf e}_1, {\bf v}_1\}$ and $\{Q{\bf e}_1, {\bf v}_2\}$ consists of two linearly independent vectors. We may assume the former pair is such. Then $\{\Psi(M_1)+\lambda(Q{\bf e}_1)(Q{\bf e}_1)^{\ast}\, :\, 0\neq \lambda\in\FF\}$ is a leaf of $\Psi(M_1)$ and hence a $\Psi$-image of some leaf $\{M_1+\lambda {\bf w}_1{\bf w}_1^{\ast}\, :\, 0\neq \lambda\in\FF\}$ of~$M_1$. Consequently, (\ref{ii16}) implies that $I_2=a {\bf e}_1 h_2({\bf w}_1)^{\ast}+\inv{a}  h_2({\bf w}_1){\bf e}_1^{\ast}$ holds for some $a\neq 0$, which is a contradiction, since the matrix on the right side of the equality has zero second diagonal entry.

Hence, $\rank\big(\Psi(M_1)-\Psi(M_2)\big)=1$, which concludes the proof of Step~3.
\end{myenumerate}

We are now able to end the proof. First, we apply Step~1 and Step~3 with $r=n$. We continue with a series of consecutively applications of Step~2 and Step~3. We start with $r=n-1$, proceed with $r=n-2$, etc. We end this procedure at $r=2$.  Finally we use Step~2 for $r=1$. In this way we obtain a (bijective)
map $\Psi: {\cal H}_n(\mathbb{F}_{q^2})\to {\cal H}_n(\mathbb{F}_{q^2})$ that preserves adjacency. By Lemma~\ref{FFA} (or by the fundamental theorem of geometry of hermitian matrices~\cite[Theorem~6.4]{wan}), $\Psi$ is of the form $\Psi(A)=PA^{\sigma}P^{\ast}+B$, where $B$ is some fixed hermitian matrix. Since $\Psi(0)=0$ by the construction, it follows that $B=0$. Consequently,
either $\Phi(A)=PA^{\sigma}P^{\ast}$ or $\Phi(A)=(P^{\ast})^{-1}(A^{\sigma})^{-1}P^{-1}$.
\end{proof}

\section{Minkowski space-time}

In this section we apply Main Theorem and authors previous result~\cite{FFA} for $2\times 2$ matrices to obtain  characterizations of maps that preserve the `speed of light' on (a) finite Minkowski space,  (b) the complement of the light cone in it. To better understand these results we firstly survey few related theorems and describe the connection with special theory of relativity.

A 4-dimensional Minkowski space-time $M_4$ is a vector space~$\mathbb{R}^4$ equipped with an indefinite inner product $$({\bf r}_1,{\bf r}_2):=-x_1x_2-y_1y_2-z_1z_2+c^2t_1t_2$$
between \emph{events} ${\bf r}_1:=(x_1,y_1,z_1,ct_1)^{\tr}$ and  ${\bf r}_2:=(x_2,y_2,z_2,ct_2)^{\tr}$. Here $c$ denotes the speed of light, and we choose units of measurement such that $c=1$. A $4\times 4$ matrix  $L$ is a \emph{Lorentz matrix} if $(L{\bf r}_1,L{\bf r}_2)=({\bf r}_1,{\bf r}_2)$ for all events ${\bf r}_1$ and ${\bf r}_2$. If
$$M=\begin{bmatrix}
-1& 0& 0& 0\\
0&-1 & 0& 0\\
0&0 &-1 &0 \\
0&0 &0 &1
\end{bmatrix},$$
then $({\bf r}_1,{\bf r}_2)={\bf r}_2^{\tr}M{\bf r}_1$ and we deduce that $L$ is a Lorentz matrix if and only if
\begin{equation}\label{eqq3}
L^{\tr} M L=M.
\end{equation}

Assume that to each point $(x,y,z)^{\tr}$ in $\RR^3$ a clock is assigned (to obtain $\RR^4$), which is synchronized with the clock at the origin $(0,0,0)^{\tr}$, that is, the event $(0,0,0,t)^{\tr}$ is `observed' from the point $(x,y,z)^{\tr}$ at time $t+\sqrt{x^2+y^2+z^2}/c$. In 1905 Einstein introduced special relativity~\cite{Einstein}, where he derived a Lorentz transformation, which transforms the coordinates of an event ${\bf r}=(x,y,z,t)^{\tr}$ from one synchronized system to coordinates $\psi({\bf r})=(\hat{x},\hat{y},\hat{z},\hat{t})^{\tr}$ of another synchronized system. In the derivation he assumed in particular that the motion between the two systems is uniform and linear, map $\psi$ is affine, and the speed of light is constant and equal in both systems. In 1950 Aleksandrov showed that the last assumption is sufficient. He proved the following theorem.
\begin{thm}\label{aleks1}
A bijective map $\psi: \RR^4\to \RR^4$ satisfies the rule
\begin{equation}\label{ee1}
({\bf r}_1-{\bf r}_2,{\bf r}_1-{\bf r}_2)=0 \Longleftrightarrow  \big(\psi({\bf r}_1)-\psi({\bf r}_2),\psi({\bf r}_1)-\psi({\bf r}_2)\big)=0
\end{equation}
if and only if it is of the form
$\psi({\bf r})=\alpha L {\bf r}+{\bf r}_0$
where $0\neq \alpha\in\RR$, ${\bf r}_0\in\RR^4$, and $L$ is a Lorentz matrix.
\end{thm}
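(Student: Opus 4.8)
The plan is to transfer the statement to the geometry of $2\times 2$ hermitian matrices over $\CC$ and then invoke Hua's fundamental theorem. The sufficiency is a direct computation: if $\psi({\bf r})=\alpha L{\bf r}+{\bf r}_0$ with $L$ a Lorentz matrix and $\alpha\neq 0$, then $\psi({\bf r}_1)-\psi({\bf r}_2)=\alpha L({\bf r}_1-{\bf r}_2)$, so by~(\ref{eqq3}) we get $\big(\psi({\bf r}_1)-\psi({\bf r}_2),\psi({\bf r}_1)-\psi({\bf r}_2)\big)=\alpha^2\,({\bf r}_1-{\bf r}_2,{\bf r}_1-{\bf r}_2)$, and~(\ref{ee1}) follows since $\alpha^2\neq 0$. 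So the whole content lies in the necessity.

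For the necessity I would first fix the $\RR$-linear bijection $h\colon\RR^4\to{\cal H}_2(\CC)$ given by
\begin{equation*}
h\big((x,y,z,t)^{\tr}\big)=\begin{bmatrix} t+z & x-iy\\ x+iy & t-z\end{bmatrix},
\end{equation*}
which satisfies $\det h({\bf r})=({\bf r},{\bf r})$ for every event ${\bf r}$. Since a nonzero $2\times 2$ matrix has rank one exactly when its determinant vanishes, and since $h({\bf r}_1)-h({\bf r}_2)=h({\bf r}_1-{\bf r}_2)$, for distinct events one has $({\bf r}_1-{\bf r}_2,{\bf r}_1-{\bf r}_2)=0$ if and only if $h({\bf r}_1)$ and $h({\bf r}_2)$ are adjacent. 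As $\psi$ is bijective, the conjugated map $\Phi:=h\circ\psi\circ h^{-1}$ on ${\cal H}_2(\CC)$ is then a bijection that preserves adjacency in both directions, i.e.\ it satisfies the analogue over $\CC$ of condition~(\ref{ee7}).

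Next I would apply Hua's fundamental theorem of the geometry of hermitian matrices~\cite[Theorem~6.4]{wan} to $\Phi$, obtaining $\Phi(A)=\lambda PA^{\sigma}P^{\ast}+A_0$ for some $\lambda\in\RR\setminus\{0\}$, an invertible $P$ over $\CC$, some $A_0\in{\cal H}_2(\CC)$, and a field automorphism $\sigma$ of $\CC$; here $\sigma$ necessarily commutes with complex conjugation, since otherwise $A^{\sigma}$ would fail to be hermitian. The crucial observation is that such $\sigma$ carries the fixed field $\RR$ onto itself, and $\RR$ admits no field automorphism besides the identity (its ordering is definable through squares), so $\sigma$ fixes $\RR$ pointwise; as $[\CC:\RR]=2$, $\sigma$ is therefore the identity or complex conjugation. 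In either case $A\mapsto A^{\sigma}$ is $\RR$-linear on ${\cal H}_2(\CC)$, so $\Phi$ is $\RR$-affine, and hence $\psi=h^{-1}\circ\Phi\circ h$ is $\RR$-affine; write $\psi({\bf r})=T{\bf r}+{\bf r}_0$ with $T$ an invertible real $4\times 4$ matrix.

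Finally I would recover the Lorentz structure. From $\psi({\bf r}_1)-\psi({\bf r}_2)=T({\bf r}_1-{\bf r}_2)$ and~(\ref{ee1}), the quadratic form ${\bf v}\mapsto(T{\bf v},T{\bf v})$ has exactly the same zero set as ${\bf v}\mapsto({\bf v},{\bf v})$, namely the null cone of a nondegenerate form of signature $(1,3)$. Two real quadratic forms in four variables sharing such a zero set are proportional, so $(T{\bf v},T{\bf v})=c\,({\bf v},{\bf v})$ for some $c\neq 0$; since $T^{\tr}MT$ is congruent to $M$ it has the same inertia, which forces $c>0$. Setting $\alpha:=\sqrt{c}$ and $L:=\alpha^{-1}T$ gives $(L{\bf v},L{\bf v})=({\bf v},{\bf v})$ for all ${\bf v}$, i.e.\ $L^{\tr}ML=M$, so $L$ is a Lorentz matrix by~(\ref{eqq3}) and $\psi({\bf r})=\alpha L{\bf r}+{\bf r}_0$. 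The main obstacle is Hua's fundamental theorem itself, which is used as a black box; granting it, the only delicate points are the rigidity argument that pins $\sigma$ down to the identity or conjugation (this is precisely what yields affineness of $\psi$ with no topological hypothesis) and the elementary fact that two quadratic forms with a common light cone of signature $(1,3)$ differ by a positive scalar.
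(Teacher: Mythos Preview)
The paper does not give its own proof of this statement: Theorem~\ref{aleks1} is quoted as Aleksandrov's 1950 result, and the paper only remarks that Hua reproved it via the fundamental theorem of geometry of $2\times 2$ complex hermitian matrices using the bijection~(\ref{ee2}) and the equivalence~(\ref{ee5}). Your proposal is precisely a fleshed-out version of that Hua approach, and it is correct; the argument pinning down $\sigma$ (commutation with conjugation forces $\sigma|_{\RR}=\mathrm{id}$, hence $\sigma\in\{\mathrm{id},\overline{\,\cdot\,}\}$) and the Sylvester-inertia step forcing $c>0$ are both sound. One minor shortcut you could take: once you have $\Phi(A)=\lambda P A^{\sigma}P^{\ast}+A_0$ with $\sigma|_{\RR}=\mathrm{id}$, the $2\times 2$ determinant gives directly $\det\big(\Phi(A)-\Phi(B)\big)=\lambda^{2}\,|\det P|^{2}\det(A-B)$, so $(T{\bf v},T{\bf v})=\lambda^{2}|\det P|^{2}\,({\bf v},{\bf v})$ with an explicitly positive constant, bypassing the ``same null cone $\Rightarrow$ proportional'' lemma.
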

Observe that $({\bf r}_1-{\bf r}_2,{\bf r}_1-{\bf r}_2)=0$ if and only if a light signal can pass between events ${\bf r}_1$ and ${\bf r}_2$, so maps that satisfy~(\ref{ee1}) are sometimes called maps that \emph{preserve the speed of light} (in both directions)~\cite{lester}.

In~\cite{zeeman}, Zeeman obtained a result similar to Theorem~\ref{aleks1}. He characterized bijective maps, which satisfy $({\bf r}_1-{\bf r}_2,{\bf r}_1-{\bf r}_2)=0$, $t_1< t_2$ if and only if $\big(\psi({\bf r}_1)-\psi({\bf r}_2),\psi({\bf r}_1)-\psi({\bf r}_2)\big)=0$, $\hat{t}_1< \hat{t}_2$. Moreover, in Lemma~1 he proved that these are the same bijective maps that satisfy $({\bf r}_1-{\bf r}_2,{\bf r}_1-{\bf r}_2)>0$, $t_1< t_2$ if and only if $\big(\psi({\bf r}_1)-\psi({\bf r}_2),\psi({\bf r}_1)-\psi({\bf r}_2)\big)>0$, $\hat{t}_1< \hat{t}_2$. Two events that satisfy $({\bf r}_1-{\bf r}_2,{\bf r}_1-{\bf r}_2)>0$ are \emph{time-like}, and for such events it is known that, in appropriate system, they appear at the same place at different time (a cause and its effect for example). So maps characterized by Zeeman are those maps from~(\ref{ee1}) that obey `causality', an assumption in special relativity, which says that an effect cannot occur before its cause in a different system.
It turns out that such maps are precisely those maps from Theorem~\ref{aleks1} for which $\alpha>0$ and the $(4,4)$-th entry of $L$ is $\geq 1$, or equivalently, $\alpha<0$ and the $(4,4)$-th entry of $L$ is $\leq - 1$~(cf.~\cite[p.~5]{naber}).

Hua reproved Theorem~\ref{aleks1} as an application of the fundamental theorem of geometry of $2\times 2$ complex hermitian matrices~\cite{hua_knjiga}. In fact, he considered the map
$\Omega: \RR^4\to\mathcal{H}_2(\CC)$, defined by
\begin{equation}\label{ee2}
\Omega((x,y,z,t)^{\tr})=\begin{bmatrix}
t+x& y+\imath z\\
y-\imath z& t-x
\end{bmatrix},
\end{equation}
and observed that
it is bijective with
\begin{equation}\label{ee5}
({\bf r}_1-{\bf r}_2,{\bf r}_1-{\bf r}_2)=0,\ {\bf r}_1\neq {\bf r}_2 \Longleftrightarrow \rank\big(\Omega({\bf r}_1)-\Omega({\bf r}_2)\big)=1.
\end{equation}
\v{S}emrl and Huang recently generalized the fundamental theorem for complex hermitian matrices~\cite{huang_semrl}. If the correspondence~(\ref{ee2}) and the techniques from~\cite{hua_knjiga} are applied to their result, the following is deduced.
\begin{thm}\label{semrl}
Let $\psi: \RR^4\to \RR^4$ be a map such that $\big(\psi({\bf r})-\psi({\bf r}^{\prime}),\psi({\bf r})-\psi({\bf r}^{\prime})\big)\neq 0$ for some ${\bf r}, {\bf r}^{\prime}\in\RR^4$. Then $\psi$
satisfies the rule
$$({\bf r}_1-{\bf r}_2,{\bf r}_1-{\bf r}_2)=0, {\bf r}_1\neq {\bf r}_2 \Longrightarrow  \big(\psi({\bf r}_1)-\psi({\bf r}_2),\psi({\bf r}_1)-\psi({\bf r}_2)\big)=0, \psi({\bf r}_1)\neq \psi({\bf r}_2)$$
if and only if it is of the form
$\psi({\bf r})=\alpha L {\bf r}+{\bf r}_0$,
where $0\neq \alpha\in\RR$, ${\bf r}_0\in\RR^4$, and $L$ is a Lorentz matrix.
\end{thm}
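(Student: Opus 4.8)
The plan is to mimic Hua's derivation of Theorem~\ref{aleks1} from the fundamental theorem of geometry of $2\times2$ hermitian matrices~\cite{hua_knjiga}, but to feed in the stronger one-directional Huang--\v{S}emrl characterization~\cite{huang_semrl} in place of the classical (bijective, both-directional) one. The only facts about $\Omega$ from~(\ref{ee2}) that I would use are that it is an $\RR$-linear bijection $\RR^4\to\mathcal{H}_2(\CC)$ with $\det\Omega({\bf r})=({\bf r},{\bf r})$, together with the equivalence~(\ref{ee5}). First I would transport $\psi$ by setting $\Phi:=\Omega\circ\psi\circ\Omega^{-1}$. If $\rank(A-B)=1$, then $\Omega^{-1}(A)\neq\Omega^{-1}(B)$ are light-like separated, so by hypothesis their $\psi$-images are distinct and light-like separated, and~(\ref{ee5}) gives $\rank(\Phi(A)-\Phi(B))=1$; thus $\Phi$ preserves adjacency in one direction (bijectivity of $\Phi$, hence of $\psi$, will be a conclusion, not a hypothesis).

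Next I would translate the non-degeneracy assumption. Since $\det\Omega=(\cdot,\cdot)$, the hypothesis $(\psi({\bf r})-\psi({\bf r}'),\psi({\bf r})-\psi({\bf r}'))\neq0$ for some ${\bf r},{\bf r}'$ says precisely that $\Phi(\Omega({\bf r}))-\Phi(\Omega({\bf r}'))$ is invertible, i.e.\ the image of $\Phi$ contains two matrices at distance $2$; equivalently, this image is not contained in a single maximal set of pairwise adjacent matrices. This is exactly the non-degeneracy hypothesis of~\cite{huang_semrl}, so that theorem applies and yields $\Phi(A)=\lambda\,PA^{\sigma}P^{\ast}+Q$ for some $\lambda\in\RR\setminus\{0\}$, some invertible $P$, some $Q\in\mathcal{H}_2(\CC)$, and some automorphism $\sigma$ of $\CC$ with $\overline{\sigma(z)}=\sigma(\bar z)$. (Unlike in Theorem~\ref{thm1}, no ``inverse'' branch occurs here, since $A\mapsto A^{-1}$ is not defined on all of $\mathcal{H}_2(\CC)$.) A convenient simplification is that such a $\sigma$ preserves the fixed field $\RR$ and hence restricts to an automorphism of it; as $\RR$ is rigid and $[\CC:\RR]=2$, necessarily $\sigma=\mathrm{id}$ or $\sigma$ is complex conjugation, so no wild (non-measurable) maps can intervene.

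It then remains to transport back: $\psi=\Omega^{-1}\circ\Phi\circ\Omega$, and by $\RR$-linearity of $\Omega$ each factor of $\Phi$ becomes a concrete map of $\RR^4$. The term $Q$ becomes the translation ${\bf r}_0:=\Omega^{-1}(Q)$. The congruence $A\mapsto PAP^{\ast}$ scales $\det$ by $\det P\,\overline{\det P}=|\det P|^2>0$, so the corresponding $\RR$-linear map $T_P:=\Omega^{-1}\circ(P(\cdot)P^{\ast})\circ\Omega$ obeys $(T_P{\bf r},T_P{\bf r})=|\det P|^2({\bf r},{\bf r})$, whence $|\det P|^{-1}T_P$ satisfies~(\ref{eqq3}) and is a Lorentz matrix. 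The real scalar $\lambda$ contributes $|\lambda|$ together with $\mathrm{sgn}(\lambda)I_4\in O(1,3)$, and $\sigma$ contributes either the identity or the Lorentz reflection $(x,y,z,t)^{\tr}\mapsto(x,y,-z,t)^{\tr}$ induced by $A\mapsto\bar A$. Collecting these gives $\psi({\bf r})=\alpha L{\bf r}+{\bf r}_0$ with $\alpha:=|\lambda||\det P|\neq0$ and $L$ a product of Lorentz matrices, hence a Lorentz matrix. Conversely, any $\psi$ of this form is injective and satisfies $(\psi({\bf r}_1)-\psi({\bf r}_2),\psi({\bf r}_1)-\psi({\bf r}_2))=\alpha^2({\bf r}_1-{\bf r}_2,{\bf r}_1-{\bf r}_2)$ by~(\ref{eqq3}), which makes the stated rule (and its time-like witness) immediate.

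I expect the main obstacle to be bookkeeping rather than depth: essentially all the content sits inside the Huang--\v{S}emrl theorem, and the work here is (i) to verify that ``the image of $\Phi$ meets two matrices at distance $2$'' is precisely their non-degeneracy hypothesis --- this is where the assumption $(\psi({\bf r})-\psi({\bf r}'),\psi({\bf r})-\psi({\bf r}'))\neq0$ is spent --- and (ii) to run the dictionary between congruence transformations, real scalars, and $\sigma$ on $\mathcal{H}_2(\CC)$ on the one side and similitudes of the Minkowski form on $\RR^4$ on the other. The one point needing a little care is to confirm that the similitude factor $|\det P|^2$ is genuinely positive, so that a true Lorentz matrix (not merely a conformal one) is extracted, and that the sign of $\lambda$ and the choice of $\sigma$ are absorbed into $L$ rather than into $\alpha$.
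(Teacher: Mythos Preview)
Your proposal is correct and is exactly the approach the paper indicates: the paper does not give a detailed proof of Theorem~\ref{semrl} but merely remarks that it follows by applying the correspondence~(\ref{ee2}) and Hua's techniques from~\cite{hua_knjiga} to the Huang--\v{S}emrl theorem~\cite{huang_semrl}, and your write-up fills in precisely those details. The dictionary you describe (congruences $\leftrightarrow$ Lorentz similitudes, $\sigma\in\{\mathrm{id},\bar{\phantom{z}}\}$ via rigidity of $\RR$, translation term, sign of $\lambda$) is the real-field analogue of the computations the paper carries out in Lemma~\ref{minkowski1} for the finite-field case.
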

A different kind of generalization of Theorem~\ref{aleks1} was obtained by Aleksandrov~\cite{aleksandrov} and Lester~\cite{lester} (see also~\cite{lester_hand} and~\cite{popovici}). They proved the next result.
\begin{thm}\label{aleks-lester}
Let $\mathcal{D}\subseteq \RR^4$ be an open connected subset. A map $\varphi: \mathcal{D}\to \RR^4$ satisfies the rule
\begin{equation}\label{ee4}
({\bf r}_1-{\bf r}_2,{\bf r}_1-{\bf r}_2)=0 \Longleftrightarrow  \big(\varphi({\bf r}_1)-\varphi({\bf r}_2),\varphi({\bf r}_1)-\varphi({\bf r}_2)\big)=0
\end{equation}
if and only if it is a (well defined) composition of maps of the forms
\begin{equation}\label{ee3}
\varphi_1({\bf r})={\bf r}+{\bf r}_0,\ \varphi_2({\bf r})=\lambda {\bf r},\ \varphi_3({\bf r})=L {\bf r},\ \varphi_4({\bf r})=\frac{{\bf r}}{({\bf r},{\bf r})},\ \varphi_5({\bf r})=\frac{{\bf r}+({\bf r},{\bf r}){\bf n}}{1+2({\bf r},{\bf n})}.
\end{equation}
Here, ${\bf r}_0\in\RR^4$, $0\neq \lambda \in\RR$, $L$ is a Lorentz matrix, and ${\bf n}\in\RR^4$ satisfies $({\bf n},{\bf n})=0$.
\end{thm}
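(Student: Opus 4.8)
The plan is to separate the (routine) sufficiency from the (hard) necessity. For sufficiency one checks that each $\varphi_i$ preserves the null relation in both directions and is invertible with inverse again of the listed form, so every well-defined composition does the same; only $\varphi_4$ and $\varphi_5$ require a computation. For the inversion, writing $\varphi_4({\bf r}_1)-\varphi_4({\bf r}_2)=\frac{({\bf r}_2,{\bf r}_2){\bf r}_1-({\bf r}_1,{\bf r}_1){\bf r}_2}{({\bf r}_1,{\bf r}_1)({\bf r}_2,{\bf r}_2)}$ and expanding gives the M\"obius-type identity
\[
\big(\varphi_4({\bf r}_1)-\varphi_4({\bf r}_2),\varphi_4({\bf r}_1)-\varphi_4({\bf r}_2)\big)=\frac{({\bf r}_1-{\bf r}_2,{\bf r}_1-{\bf r}_2)}{({\bf r}_1,{\bf r}_1)({\bf r}_2,{\bf r}_2)};
\]
for $\varphi_5$ one observes, using $({\bf n},{\bf n})=0$, that $\varphi_5=\varphi_4\circ(\textrm{translation by }{\bf n})\circ\varphi_4$, so it preserves the null relation as a composition.

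For necessity, I would first prove the Minkowski analogue of~(\ref{ii1}): a maximal set of pairwise null-separated events is a whole null line. Indeed, for pairwise null-separated ${\bf r}_1,{\bf r}_2,{\bf r}_3$, putting ${\bf a}={\bf r}_1-{\bf r}_2$, ${\bf b}={\bf r}_1-{\bf r}_3$ yields $({\bf a},{\bf a})=({\bf b},{\bf b})=({\bf a},{\bf b})=0$, and since the spatial part of $(\cdot,\cdot)$ is definite, two orthogonal null vectors in $M_4$ are proportional; hence ${\bf r}_1,{\bf r}_2,{\bf r}_3$ are collinear. Combining this with~(\ref{ee4}) I would then show that $\varphi$ is injective (if $\varphi({\bf r})=\varphi({\bf r}')$ with ${\bf r}\neq{\bf r}'$, pick---possible because $\mathcal{D}$ is open and two light cones with distinct vertices meet in lower dimension---an event ${\bf r}''\in\mathcal{D}$ null-separated from ${\bf r}$ but not from ${\bf r}'$, and reach a contradiction from the two-directional rule) and that $\varphi$ carries the part in $\mathcal{D}$ of every null line into a null line, the same being true for $\varphi^{-1}$ on its image. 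Thus $\varphi$ is locally an isomorphism of the incidence structure whose points are the events and whose lines are the null lines.

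The core of the argument is a Liouville-type rigidity: an injective map between connected open subsets of (compactified) Minkowski space that preserves null-line incidence in both directions extends uniquely to an element $g$ of the conformal group $\mathrm{O}(2,4)$; transported through the correspondence $\Omega$ of~(\ref{ee2})--(\ref{ee5}) this is the ``geometry of hermitian matrices over a domain'' version of Hua's fundamental theorem (compare Theorem~\ref{semrl} for the global, bijective case, and Theorem~\ref{aleks1} for the affine part). To conclude, one checks that every such $g$, restricted to any connected open set on which it is single-valued, is a well-defined composition of $\varphi_1,\ldots,\varphi_5$: in a parabolic cell decomposition of $\mathrm{O}(2,4)$ the stabilizer $P$ of a null line consists precisely of the affine maps ${\bf r}\mapsto\alpha L{\bf r}+{\bf r}_0$ (i.e.\ compositions of $\varphi_1,\varphi_2,\varphi_3$, as in Theorem~\ref{aleks1}), while the remaining cell is generated by $P$ together with the single inversion $\varphi_4$. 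I expect the two genuinely hard points to be (i) this local-to-global extension, and (ii) the domain bookkeeping needed in the last step: the special conformal transformations $\varphi_5$ must be listed as separate generators precisely because $\varphi_4\circ(\textrm{translation})\circ\varphi_4$, read literally as a composition of partially defined maps, can have a strictly smaller domain than the genuine map $\varphi_5$. Everything else---sufficiency, the null-line lemma, and injectivity---is routine.
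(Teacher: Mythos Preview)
The paper does not prove this theorem at all: it is quoted as a known result of Aleksandrov~\cite{aleksandrov} and Lester~\cite{lester} (with further references to~\cite{lester_hand,popovici}), and is used only in Remark~\ref{remark} to derive the description of adjacency preservers on $\mathcal{HGL}_2(\CC)$. So there is no proof in the paper to compare your proposal against.

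As to your sketch itself: the sufficiency part and the null-line lemma are correct and standard, and your overall architecture for necessity---reduce to an incidence-preserving map of null lines, then invoke a Liouville-type rigidity for the conformal group of Minkowski space---is indeed the route taken in the cited literature. But what you have written is a plan, not a proof: the step you label ``local-to-global extension'' (your point~(i)) is precisely the substance of the Aleksandrov and Lester papers, and you have not supplied it. Saying that it ``is the geometry of hermitian matrices over a domain version of Hua's fundamental theorem'' is a reformulation, not an argument; Theorems~\ref{aleks1} and~\ref{semrl}, which you cite for comparison, are both global results on all of $\RR^4$ and do not obviously localize to an arbitrary open connected $\mathcal{D}$. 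If you intend to give a self-contained proof you will need to actually carry out that extension step, which is where the real work lies.
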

\begin{remark}
Theorems~\ref{aleks1} and~\ref{aleks-lester} have been proved for more general Minkowski spaces, however our interest in this paper is restricted to 4 dimensions.
\end{remark}
\begin{remark}\label{remark}
Maps $\varphi_4$ and $\varphi_5$ in~(\ref{ee3}) are not defined on the sets $C_0:=\{{\bf r}\in\RR^4 : ({\bf r},{\bf r})=0\}$ and $\{{\bf r}\in\RR^4 : ({\bf r},{\bf n})=-\frac{1}{2}\}$ respectively. Moreover, if $C_{{\bf r}^{\prime}}:=\{{\bf r}\in\RR^4 : ({\bf r}-{\bf r}^{\prime},{\bf r}-{\bf r}^{\prime})=0\}$, then routine calculations show that: (a) $\varphi_i(\RR^4\backslash C_{{\bf r}^{\prime}})=\varphi_i(\RR^4)\backslash \varphi_i(C_{{\bf r}^{\prime}})=\RR^4\backslash \varphi_i(C_{{\bf r}^{\prime}})$ for $i=1,2,3$; (b) $\varphi_1(C_{{\bf r}^{\prime}})=C_{{\bf r}^{\prime}+{\bf r}_0}$, $\varphi_2(C_{{\bf r}^{\prime}})=C_{\lambda{\bf r}^{\prime}}$, $\varphi_3(C_{{\bf r}^{\prime}})=C_{L{\bf r}^{\prime}}$; (c) $\varphi_4$ is not defined on whole $\RR^4\backslash C_{{\bf r}^{\prime}}$ unless ${\bf r}^{\prime}=0$; (d) $\varphi_4(\RR^4\backslash C_{0})=\RR^4\backslash C_{0}$; (e) if ${\bf r}^{\prime}$ is arbitrary and ${\bf n}\neq 0$, then $\varphi_5$ is not defined on whole $\RR^4\backslash C_{{\bf r}^{\prime}}$. It follows from these observations and Theorem~\ref{aleks-lester}, applied at $\mathcal{D}:=\RR^4\backslash C_0$, that a map $\varphi: \RR^4\backslash C_0\to \RR^4\backslash C_0$  satisfies~(\ref{ee4}) if and only if it is a composition of maps of the forms $\varphi_2, \varphi_3, \varphi_4$. If the
correspondence~(\ref{ee2}) and techniques from~\cite{hua_knjiga} are used in reversed way as above, we deduce that a map $\Phi: \mathcal{HGL}_2(\CC)\to \mathcal{HGL}_2(\CC)$ preserves adjacency in both directions if and only if it is of the form
$\Phi(A)=\lambda PA^{\sigma}P^{\ast}$ or $\Phi(A)=\lambda P(A^{\sigma})^{-1}P^{\ast}$, where $0\neq \lambda\in\RR$, $\sigma$ is either the identity map or complex conjugation, and complex matrix $P$ is invertible.
\end{remark}

We now switch from real Minkowski space-time to its finite analog. From now on $\FF_q$ is a finite field with $q$ elements such that $-1$ is not a square in $\FF_q$, that is, $q\equiv 3\ (\textrm{mod}\ 4)$ (cf.~\cite[p.~135]{wan2}). In particular,  $q$ is odd. Let $c=1\in\FF_q$ and assume that the product $({\bf r}_1, {\bf r}_2)$ between
${\bf r}_1, {\bf r}_2\in \FF_q^4$, a Lorentz matrix $L$, and $M$ are defined analogously as in the real space-time.
We say that a $4\times 4$ matrix $K$ over~$\FF_q$ is an \emph{anti-Lorentz matrix} if $(K{\bf r}_1,K{\bf r}_2)=-({\bf r}_1,{\bf r}_2)$ for all ${\bf r}_1,{\bf r}_2\in\FF_q^4$.

\begin{remark}
Anti-Lorentz matrices do not exist in the case of real Minkowski space-time. In fact, any such $K$ would satisfy $K^{\tr} M K=-M$, so $M$ and $-M$ would be congruent. This contradicts the Sylvester's law of inertia, since $M$ has 1 positive and 3 negative eigenvalues, while the opposite holds for $-M$.
\end{remark}

In 1972 Blasi et al.~\cite{blasi} obtained an analog of Theorem~\ref{aleks1} for finite Minkowski space-time over a prime field.
\begin{thm}\label{blasi}
Let $p>3$ be a prime with $p\equiv 3\ (\textrm{mod}\ 4)$. Then a bijective map $\psi: \FF_{p}^4\to \FF_p^4$ satisfies the rule
$$({\bf r}_1-{\bf r}_2,{\bf r}_1-{\bf r}_2)=0 \Longrightarrow  \big(\psi({\bf r}_1)-\psi({\bf r}_2),\psi({\bf r}_1)-\psi({\bf r}_2)\big)=0$$
if and only if it is of the form
\begin{equation*}
\psi({\bf r})=\alpha L {\bf r}+{\bf r}_0\qquad \textrm{or}\qquad \psi({\bf r})=\alpha K {\bf r}+{\bf r}_0,
\end{equation*}
where $0\neq \alpha\in\FF_{q}$ is a square, ${\bf r}_0\in\FF_q^4$, while $L$ and $K$ are Lorentz and anti-Lorentz matrices respectively.
\end{thm}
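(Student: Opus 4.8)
The plan: the forward direction is a one-line computation, while for the converse I would transfer everything to $\hdva$ through the Hua-type substitution~(\ref{ee2}) and then invoke Lemma~\ref{FFA}. Concretely, a map $\psi({\bf r})=\alpha L{\bf r}+{\bf r}_0$ (resp.\ $\psi({\bf r})=\alpha K{\bf r}+{\bf r}_0$) is bijective, and whenever $({\bf r}_1-{\bf r}_2,{\bf r}_1-{\bf r}_2)=0$ one gets $(\psi({\bf r}_1)-\psi({\bf r}_2),\psi({\bf r}_1)-\psi({\bf r}_2))=\alpha^2({\bf r}_1-{\bf r}_2,{\bf r}_1-{\bf r}_2)=0$ (resp.\ $=-\alpha^2({\bf r}_1-{\bf r}_2,{\bf r}_1-{\bf r}_2)=0$), so such maps obey the rule. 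For the converse, since $q\equiv 3\ (\textrm{mod}\ 4)$ the element $-1$ is a nonsquare in $\FF_q$; fix $\imath\in\FFq2$ with $\imath^2=-1$, so $\FFq2=\FF_q(\imath)$ and the involution $\inv{x}=x^q$ is $a+b\imath\mapsto a-b\imath$. Define $\Omega:\FF_q^4\to\hdva$ by formula~(\ref{ee2}) with this $\imath$; it is a linear bijection with $\det\Omega({\bf r})=({\bf r},{\bf r})$, hence $\det(\Omega({\bf r}_1)-\Omega({\bf r}_2))=({\bf r}_1-{\bf r}_2,{\bf r}_1-{\bf r}_2)$, and since a nonzero $2\times2$ hermitian matrix is singular exactly when it has rank $1$, the matrices $\Omega({\bf r}_1),\Omega({\bf r}_2)$ are adjacent iff ${\bf r}_1\neq{\bf r}_2$ and $({\bf r}_1-{\bf r}_2,{\bf r}_1-{\bf r}_2)=0$.

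Given a bijection $\psi$ obeying the stated rule, put $\Phi:=\Omega\circ\psi\circ\Omega^{-1}:\hdva\to\hdva$. If $A\neq A'$ are adjacent, then $\det(A-A')=0$, so ${\bf r}:=\Omega^{-1}(A)$ and ${\bf r}':=\Omega^{-1}(A')$ satisfy $({\bf r}-{\bf r}',{\bf r}-{\bf r}')=0$; by hypothesis $(\psi({\bf r})-\psi({\bf r}'),\psi({\bf r})-\psi({\bf r}'))=0$, and $\psi({\bf r})\neq\psi({\bf r}')$ since $\psi$ is injective and ${\bf r}\neq{\bf r}'$. Hence $\rank(\Phi(A)-\Phi(A'))=1$, so $\Phi$ preserves adjacency and Lemma~\ref{FFA} gives an invertible $P$, a hermitian $B$, and a field automorphism $\sigma$ of $\FFq2$ with $\Phi(A)=PA^{\sigma}P^{\ast}+B$. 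Because $p$ is prime, $\mathrm{Aut}(\FFq2)=\{\mathrm{id},\,x\mapsto x^q\}$, so $\sigma$ is the identity or the involution.

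It remains to pull $\Phi$ back through the linear bijection $\Omega$. Writing $B=\Omega({\bf r}_0)$, this summand becomes the translation ${\bf r}\mapsto{\bf r}+{\bf r}_0$. If $\sigma$ is the involution, then $\Omega({\bf r})^{\sigma}=\inv{\Omega({\bf r})}=\Omega(R{\bf r})$, where $R:=\diag(1,1,-1,1)$ is easily checked to be a Lorentz matrix. For the congruence, let $T$ be the $4\times4$ matrix over $\FF_q$ defined by $\Omega(T{\bf r})=P\,\Omega({\bf r})\,P^{\ast}$; then
\begin{equation*}
(T{\bf r},T{\bf r})=\det\big(P\,\Omega({\bf r})\,P^{\ast}\big)=\N(\det P)\,\det\Omega({\bf r})=\alpha\,({\bf r},{\bf r}),\qquad \alpha:=\N(\det P)=\det P\cdot\inv{\det P}\in\FF_q^{\ast},
\end{equation*}
and by polarization (char $\neq 2$) the analogous identity holds for the bilinear form. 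If $\alpha$ is a square, pick $\beta$ with $\beta^2=\alpha$; then $\beta^{-1}T$ preserves $(\cdot,\cdot)$, i.e.\ is a Lorentz matrix. If $\alpha$ is a nonsquare, then $-\alpha$ is a square (as $-1$ is a nonsquare), pick $\beta$ with $\beta^2=-\alpha$; then $\beta^{-1}T$ sends $(\cdot,\cdot)$ to $-(\cdot,\cdot)$, i.e.\ is an anti-Lorentz matrix. Exactly one of $\beta,-\beta$ is a square (again because $-1$ is a nonsquare), and the negative of a Lorentz (resp.\ anti-Lorentz) matrix is Lorentz (resp.\ anti-Lorentz), so I may take $\beta$ to be a square. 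Finally, a product of Lorentz matrices is Lorentz and a Lorentz times an anti-Lorentz is anti-Lorentz, so absorbing $R$ (when $\sigma$ is the involution) does not change the classification. Collecting the three factors yields $\psi({\bf r})=\alpha L{\bf r}+{\bf r}_0$ or $\psi({\bf r})=\alpha K{\bf r}+{\bf r}_0$ with $\alpha$ a nonzero square, as required.

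The conceptual content is carried entirely by Lemma~\ref{FFA}; the one genuinely delicate point is the normalization of the scalar, where the hypothesis $p\equiv 3\ (\textrm{mod}\ 4)$ enters twice — to make $-\alpha$ (hence an anti-Lorentz factor) available when $\alpha$ is a nonsquare, and to force the residual scalar to be a square. I also note that the anti-Lorentz alternative genuinely occurs, because the field norm $\N:\FFq2^{\ast}\to\FF_q^{\ast}$ is surjective and so $\alpha=\N(\det P)$ ranges over all nonzero values. The remaining verifications — $\det\Omega({\bf r})=({\bf r},{\bf r})$, that $R=\diag(1,1,-1,1)$ is Lorentz, the polarization step, and closure of Lorentz and anti-Lorentz matrices under products and negation — are routine.
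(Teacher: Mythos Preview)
Your proof is correct and mirrors the approach the paper takes for its generalization, Theorem~\ref{minkowski2}: transfer to $\hdva$ via $\Omega$, invoke Lemma~\ref{FFA}, and pull back through the computations that the paper packages as Lemma~\ref{minkowski1}. Note that the paper does not itself prove Theorem~\ref{blasi} (it is cited from~\cite{blasi}); your argument is essentially the proof of Theorem~\ref{minkowski2} specialized to prime $q=p$, where $\mathrm{Aut}(\FFq2)=\{\mathrm{id},\inv{\phantom{x}}\}$ and the residual automorphism~$\tau$ of $\FF_p$ is forced to be the identity.
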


There exists an extensive literature of theoretical alternatives, where particle physics is not based over complex numbers, but over some other field instead. Some of these alternatives use finite fields (cf.~\cite{lev,blasi,foldes} and references therein). If $-1$ is not a square in $\FF_q$, then the field is of the form $\{0\}\cup\{\lambda_1^2,\ldots,\lambda_{(q-1)/2}^2\}\cup \{-\lambda_1^2,\ldots,-\lambda_{(q-1)/2}^2\}$. This motivated Blasi et al.~\cite{blasi} to interpret nonzero squares as `positive numbers' and non-squares  as `negative numbers' (note that the sum of two squares is not necessarily a square). With this interpretation anti-Lorentz matrices interchange the inner with the outer part of light-cones, that is, time-like events related to subluminal velocity are transformed to space-like events related to superluminal velocity and vice versa. Because of this the authors of reference~\cite{blasi} felt that transformations described in Theorem~\ref{blasi}
set up a favourable framework for the introduction of \emph{tachyons}, i.e., hypothetical particles that move faster than light, which were never discovered. It is not the purpose of this paper to either approve or criticize this physical interpretation. Below we generalize Theorem~\ref{blasi}. As mentioned by Hua for Einstein's special relativity~\cite[p.~92]{hua_knjiga}, a reduction of axioms/assumptions may help to either verify or overthrow a theory.

We first need few more technicalities. The splitting field of the polynomial $p(x)=x^2+1$, $p\in\FF_{q}[x]$, has $q^2$ elements (cf.~\cite[Corollary~2.15]{finite-fields-LN}). We denote it by~$\FFq2$. Since $x^q=x$ for all $x\in\FF_q$, $\FF_q=\FF$ is the fixed field of the involution $\inv{x}=x^q$ on $\FFq2$. Let $\imath\in\FFq2$ be such that $\imath^2=-1$. Since $q=4k+3$ for some integer $k$, it follows that $\inv{\imath}=(\imath^2)^{2k+1}\imath=-\imath$ and $\imath\inv{\imath}=1$. In particular, $x=\frac{x+\inv{x}}{2}+\imath\frac{x-\inv{x}}{2\imath}\in \FF+\imath \FF$ for all $x\in\FFq2$ and $\{1,\imath\}$ is a  basis of the vector space $\FFq2$ over the field $\FF_q$.
Let the map $\Omega: \FF_q^4\to\hdva$ be defined as in~(\ref{ee2}). Hence it is bijective and satisfies~(\ref{ee5}).
If $C_0:=\{{\bf r}\in\FF_q^4\ :\ ({\bf r},{\bf r})=0\}$, then $\Omega$ maps the set $\FF_q^4\backslash C_0$ bijectively onto $\hgldva$. Let $\omega$ be the restriction $\Omega|_{\FF_q^4\backslash C_0}$.

\begin{lemma}\label{minkowski1}
Assume that  $q\equiv 3\ (\textrm{mod}\ 4)$. Let $\psi=\Omega^{-1}\circ\Psi\circ\Omega$ and $\varphi=\omega^{-1}\circ\Phi\circ\omega$, where $\Psi: \hdva\to\hdva$, $\Phi: \hgldva\to\hgldva$ are defined as follows:
\begin{enumerate}
\item If $\Psi(A)=A+B$ for some $B\in \hdva$, then $\psi({\bf r})={\bf r}+{\bf r}_0$ for some ${\bf r}_0\in\FF_q^4$.
\item If $\Phi(A)=A^{-1}$, then $\varphi({\bf r})=\frac{L{\bf r}}{({\bf r},{\bf r})}$ for some Lorentz matrix~$L$.
\item If $\Psi(A)=A^{\sigma}$ for some field automorphism $\sigma: \FFq2\to\FFq2$, then $\psi({\bf r})=L{\bf r}^{\tau}$ for some Lorentz matrix $L$ and automorphism $\tau: \FF_q\to\FF_q$.
\item If $\Psi(A)=PAP^{\ast}$ for some invertible $P$ such that $\det P\inv{\det P}$ is a square in $\FF_q$, then $\psi({\bf r})=\alpha L{\bf r}$ for some Lorentz matrix $L$ and nonzero $\alpha\in\FF_q$.
\item If $\Psi(A)=PAP^{\ast}$ for some invertible $P$ such that $\det P\inv{\det P}$ is not a square in $\FF_q$, then $\psi({\bf r})=\alpha K{\bf r}$ for some anti-Lorentz matrix $K$ and  nonzero $\alpha\in\FF_q$.
\end{enumerate}
\end{lemma}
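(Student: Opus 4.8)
The plan is to transport each of the five operations through the bijection $\Omega$. First I would record the elementary features of $\Omega$ that make this possible: it is additive, it is $\FF_q$-homogeneous in the sense that $\Omega(\lambda{\bf r})=\lambda\Omega({\bf r})$ for $\lambda\in\FF_q$ (because $\imath$ commutes with $\FF_q$), it is bijective, and a one-line expansion gives $\det\Omega({\bf r})=({\bf r},{\bf r})$. Consequently $\Omega({\bf r}_1)-\Omega({\bf r}_2)=\Omega({\bf r}_1-{\bf r}_2)$ and $\Omega$ intertwines the two quadratic forms; since $q$ is odd, polarization then shows it intertwines the two bilinear forms as well. Part~(i) is immediate: $\psi({\bf r})=\Omega^{-1}\big(\Omega({\bf r})+B\big)=\Omega^{-1}(\Omega({\bf r}))+\Omega^{-1}(B)={\bf r}+{\bf r}_0$ with ${\bf r}_0:=\Omega^{-1}(B)$.

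For~(ii) I would use the cofactor formula $\Omega({\bf r})^{-1}=({\bf r},{\bf r})^{-1}\left[\begin{smallmatrix}t-x&-(y+\imath z)\\-(y-\imath z)&t+x\end{smallmatrix}\right]$ and observe that the bracketed matrix equals $\Omega(M{\bf r})$ for $M=\diag(-1,-1,-1,1)$, which is a Lorentz matrix since $M^{2}=I$ forces $M^{\top}MM=M$. By $\FF_q$-homogeneity this gives $\varphi({\bf r})=\Omega^{-1}\big(\Omega({\bf r})^{-1}\big)=\frac{M{\bf r}}{({\bf r},{\bf r})}$, so $L:=M$ works. For~(iii), note that any automorphism $\sigma$ of $\FFq2$ commutes with the involution $x\mapsto\inv{x}=x^{q}$ (both lie in the cyclic Galois group over the prime field), hence $\sigma$ preserves the fixed field and $\tau:=\sigma|_{\FF_q}$ is an automorphism of $\FF_q$. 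Applying $\sigma$ entrywise to $\Omega({\bf r})$ replaces $x,y,z,t$ by their $\tau$-images and $\imath$ by $\imath^{\sigma}$, where $(\imath^{\sigma})^{2}=-1$ forces $\imath^{\sigma}=\pm\imath$. If $\imath^{\sigma}=\imath$ then $\Omega({\bf r})^{\sigma}=\Omega({\bf r}^{\tau})$, so $\psi({\bf r})={\bf r}^{\tau}$; if $\imath^{\sigma}=-\imath$ then $\Omega({\bf r})^{\sigma}=\Omega(L{\bf r}^{\tau})$ with the Lorentz matrix $L=\diag(1,1,-1,1)$. In both cases $\psi({\bf r})=L{\bf r}^{\tau}$.

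For~(iv) and~(v), the map $A\mapsto PAP^{\ast}$ maps $\hdva$ to itself, is additive, and commutes with $\FF_q$-scalars, so $\psi$ is an $\FF_q$-linear bijection of $\FF_q^{4}$; write $\psi({\bf r})=T{\bf r}$ with $T$ a $4\times 4$ matrix over $\FF_q$. From $\det\big(P\Omega({\bf r})P^{\ast}\big)=\det P\inv{\det P}\cdot({\bf r},{\bf r})$ together with $\det\Omega(\,\cdot\,)=(\,\cdot\,,\,\cdot\,)$ we obtain $(T{\bf r},T{\bf r})=N\,({\bf r},{\bf r})$ for all ${\bf r}$, where $N:=\det P\inv{\det P}\in\FF_q^{\ast}$; polarization upgrades this to $T^{\top}MT=NM$. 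If $N$ is a square, $N=\alpha^{2}$, then $L:=\alpha^{-1}T$ satisfies $L^{\top}ML=M$, so $\psi({\bf r})=\alpha L{\bf r}$, which is~(iv). If $N$ is a non-square, then, since $q\equiv 3\pmod 4$ makes $-1$ a non-square, the product $-N$ is a square, say $-N=\alpha^{2}$; then $K:=\alpha^{-1}T$ satisfies $K^{\top}MK=-M$, i.e.\ $K$ is anti-Lorentz, and $\psi({\bf r})=\alpha K{\bf r}$, which is~(v). In both cases $L$ and $K$ are automatically invertible, since $L^{\top}ML=\pm M$ forces $(\det L)^{2}=1$.

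All of this is routine bookkeeping; the only point requiring care is the dichotomy in~(iv)/(v), where one must peel off exactly the right scalar from $T$ to recover a genuine Lorentz, respectively anti-Lorentz, matrix, and where the very existence of the anti-Lorentz case rests on the elementary fact that $-1$ is a non-square in $\FF_q$ precisely when $q\equiv 3\pmod 4$, so that a non-square norm $N$ becomes a square after multiplication by $-1$.
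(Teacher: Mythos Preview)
Your argument is correct. Parts (i)--(iii) coincide with the paper's proof essentially verbatim: you use additivity of $\Omega$ for (i), the cofactor formula giving $L=M$ for (ii), and the dichotomy $\imath^{\sigma}=\pm\imath$ with $L=I$ or $L=\diag(1,1,-1,1)$ for (iii).

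For (iv) and (v) your route is genuinely different and more economical. The paper first normalizes $P$ via the surjectivity of the norm map $\N:\FFq2\to\FF_q$ so that $\det Q\,\inv{\det Q}=1$, then builds $L$ column by column from the images of the four ``Pauli'' matrices and verifies $L^{\top}ML=M$ through an explicit trace identity ${\bf r}_B^{\top}M{\bf r}_C=\tfrac{1}{2}\mathrm{Trace}(B^{-1}C)\det B$; part (v) is then handled by factoring off a concrete matrix $P_1$ with $\N(\det P_1)=-1$ and exhibiting an explicit anti-Lorentz matrix $K_2$. You bypass all of this by observing once that $\det\Omega({\bf r})=({\bf r},{\bf r})$, which together with $\FF_q$-linearity of $\psi$ gives $(T{\bf r},T{\bf r})=N({\bf r},{\bf r})$ and hence, after polarization (valid since $q$ is odd), $T^{\top}MT=NM$; the square/non-square split on $N=\det P\,\inv{\det P}$ then immediately produces the Lorentz or anti-Lorentz factor. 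Your approach is shorter and conceptually cleaner; the paper's has the minor advantage of producing the matrices $L$ and $K_2$ explicitly, which is not needed for the downstream applications.
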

\begin{proof}
(i) If $B=[\begin{smallmatrix}\beta_1& b\\
\inv{b}& \beta_2\end{smallmatrix}]$, then ${\bf r}_0=\left(\frac{\beta_1-\beta_2}{2}, \frac{b+\inv{b}}{2},  \frac{b-\inv{b}}{2\imath}, \frac{\beta_1+\beta_2}{2}\right)^{\tr}$ fits the claim.

(ii) We can take $L=M$.

(iii) Recall that the automorphisms of a finite field with $p^m$ elements, where $p$ is a prime, are exactly the maps $x\mapsto x^{p^j}$, $0\leq j\leq m-1$ (cf.~\cite[Theorem~2.21]{finite-fields-LN}). Hence, $\sigma(\inv{x})=\inv{\sigma(x)}$ for all $x\in \FFq2$. Let $\sigma(\imath)=\alpha+\imath \beta$ for some $\alpha,\beta\in\FF_q$. Then equations $\imath+\inv{\imath}=0$ and $\imath\inv{\imath}=1$ imply that $\alpha=0$ and $\beta=\pm 1$, so $\sigma(\imath)\in\{\imath,-\imath\}$. If $\sigma(\imath)=\imath$, we choose the identity matrix as $L$, else if $\sigma(\imath)=-\imath$, we choose the diagonal matrix $\diag(1,1,-1,1)$ as $L$. In both cases $(\Omega^{-1}\circ\Psi\circ\Omega)({\bf r})=L{\bf r}^{\sigma}$. If $q=p^m$, then $\sigma(x)=x^{p^j}$ for some $0\leq j\leq 2m-1$. If $j\leq m-1$, then $\sigma$ restricted to $\FF_q$ is an automorphism $\tau$ of $\FF_q$. If $j=m+k$ for some $0\leq k\leq m-1$, then, for any $\lambda\in\FF_q$, $\sigma(\lambda)=\lambda^{p^j}=(\lambda^{q})^{p^k}=\lambda^{p^k}=:\tau(\lambda)$. Hence, in both cases $(\Omega^{-1}\circ\Psi\circ\Omega)({\bf r})=L{\bf r}^{\tau}$ for all ${\bf r}\in\FF_q^4$, where $\tau$ is an automorphism of $\FF_q$.

(iv) Let $\det P\inv{\det P}=\alpha^2$ for some $\alpha\in\FF_q$. Since the map $\N: \FFq2\to\FF_q$, given by $\N(x)=x\inv{x}$, is surjective (see e.g.~\cite{bose} or~\cite{finite-fields-LN}), there is $a\in\FFq2$ such that $\N(a)=\alpha^{-1}$. Then $\Psi(A)=\alpha Q A Q^{\ast}$ for $Q:=a P$ and $\det Q \inv{\det Q}=1$. Let ${\bf e}_1:=(1,0)^{\tr}$ and
${\bf e}_2:=(0,1)^{\tr}$. For any $B\in \hgldva$ define a column vector in $\FF_q^4$ by
$${\bf r}_B:=\left(\frac{{\bf e}_1^{\ast}B{\bf e}_1-{\bf e}_2^{\ast}B{\bf e}_2}{2},\frac{{\bf e}_1^{\ast}B{\bf e}_2+{\bf e}_2^{\ast}B{\bf e}_1}{2},\frac{{\bf e}_1^{\ast}B{\bf e}_2-{\bf e}_2^{\ast}B{\bf e}_1}{2\imath},\frac{{\bf e}_1^{\ast}B{\bf e}_1+{\bf e}_2^{\ast}B{\bf e}_2}{2}\right)^{\tr}.$$
For invertible $B=[\begin{smallmatrix}\beta_1&b\\
\inv{b}&\beta_2\end{smallmatrix}]$ and $C=[\begin{smallmatrix}\gamma_1&c\\
\inv{c}&\gamma_2\end{smallmatrix}]$ let $[\begin{smallmatrix}x_{11}&x_{12}\\
x_{21}&x_{22}\end{smallmatrix}]:=B^{-1}C$, i.e.,
$B[\begin{smallmatrix}x_{11}\\
x_{21}\end{smallmatrix}]=[\begin{smallmatrix}\gamma_1\\
\inv{c}\end{smallmatrix}]$ and $B[\begin{smallmatrix}x_{12}\\
x_{22}\end{smallmatrix}]=[\begin{smallmatrix}c\\
\gamma_2\end{smallmatrix}]$.
A short calculation and the Cramer's rule imply that
\begin{align}
\nonumber {\bf r}_B^{\tr}M{\bf r}_C&=\frac{1}{2}\left({\bf e}_1^{\ast}B{\bf e}_1{\bf e}_2^{\ast}C{\bf e}_2-{\bf e}_2^{\ast}B{\bf e}_1{\bf e}_1^{\ast}C{\bf e}_2+{\bf e}_2^{\ast}B{\bf e}_2{\bf e}_1^{\ast}C{\bf e}_1-{\bf e}_1^{\ast}B{\bf e}_2{\bf e}_2^{\ast}C{\bf e}_1\right)\\
\label{eqq1}&=\frac{1}{2}\left(\det[\begin{smallmatrix}\beta_1& c\\
\inv{b}& \gamma_2\end{smallmatrix}]+\det[\begin{smallmatrix}\gamma_1& b\\
\inv{c}& \beta_2\end{smallmatrix}]\right)\\
\nonumber &=\frac{1}{2}(x_{22}\det B+x_{11}\det B)=\frac{1}{2}\textrm{Trace}(B^{-1}C)\det{B},
\end{align}
where $\textrm{Trace}(X)$ denotes the sum of all diagonal entries of $X$. It is well known that map $\textrm{Trace}$ is similarity-invariant. Consequently,
\begin{align}
\nonumber {\bf r}_{QBQ^{\ast}}^{\tr}M{\bf r}_{QCQ^{\ast}}&=\frac{1}{2}\textrm{Trace}\big((Q^{\ast})^{-1}B^{-1}CQ^{\ast}\big)\det(QBQ^{\ast})\\
\label{eqq2}&=\frac{1}{2}\textrm{Trace}(B^{-1}C)\det{B}\det Q \inv{\det Q}={\bf r}_B^{\tr}M{\bf r}_C.
\end{align}
Let $L$ be the $4\times 4$ matrix with ${\bf r}_{B_j}$ as the $j$-th column, where
$$B_1:=Q[\begin{smallmatrix}1& 0\\
0& -1\end{smallmatrix}]Q^{\ast},\ B_2:=Q[\begin{smallmatrix}0& 1\\
1& 0\end{smallmatrix}]Q^{\ast},\ B_3:=Q[\begin{smallmatrix}0& \imath\\
\inv{\imath}& 0\end{smallmatrix}]Q^{\ast},\ B_4:=Q[\begin{smallmatrix}1& 0\\
0& 1\end{smallmatrix}]Q^{\ast}.$$
From (\ref{eqq2}) and (\ref{eqq1}) we deduce that
\begin{align*}
{\bf r}_{B_j}^{\tr}M{\bf r}_{B_k}&=0\quad (j\neq k),\\
{\bf r}_{B_j}^{\tr}M{\bf r}_{B_j}&=-1\quad (j=1,2,3),\\
{\bf r}_{B_4}^{\tr}M{\bf r}_{B_4}&=1,
\end{align*}
which is equivalent to (\ref{eqq3}), so $L$ is a Lorentz matrix. Let $\{{\bf f}_1,{\bf f}_2,{\bf f}_3,{\bf f}_4\}$ be the standard basis in $\FF_q^4$. Since $\Omega({\bf r}_B)=B$ for all $B$, we deduce that
\begin{equation}\label{eqq4}
(\Psi\circ\Omega)({\bf f}_j)=\Psi\big(Q^{-1}B_j(Q^{\ast})^{-1}\big)=\alpha B_j=\alpha \Omega({\bf r}_{B_j})=\alpha \Omega(L{\bf f}_j)=\Omega(\alpha L{\bf f}_j).
\end{equation}
The proof of (iv) ends by linearity and by composing equation (\ref{eqq4}) with $\Omega^{-1}$.

(v) Let $a\in\FFq2$ be such that $N(a)=-1$, $P_1:=[\begin{smallmatrix}1&0\\
0&a\end{smallmatrix}]$, and
$P_2:=PP_1^{-1}$. Then $\Psi=\Psi_2\circ\Psi_1$, where $\Psi_j(A)=P_j A P_j^{\ast}$. Since $-1$ is not a square in $\FF_q$, any non-square is a product of $-1$ and a square (cf.~\cite[Theorem~6.18]{wan2}). Hence, $\det P_2\inv{\det P_2}=-\det P\inv{\det P}$ is a square, and by (iv),
\begin{equation}\label{eqq5}
(\Omega^{-1}\circ\Psi_2\circ\Omega)({\bf r})=\alpha L{\bf r}
\end{equation}
for some Lorentz matrix $L$ and $\alpha\in\FF_q$. Write $a=\beta + \imath\gamma$ for some $\beta,\gamma\in\FF_q$. Then $N(a)=-1$ implies that $\beta^2+\gamma^2=-1$. Consequently, the matrix
$$K_2:=\begin{bmatrix}
0& 0& 0& 1\\
0&\beta & \gamma& 0\\
0&-\gamma &\beta &0 \\
1&0 &0 &0
\end{bmatrix}$$
is anti-Lorentz. Moreover, $(\Omega^{-1}\circ\Psi_1\circ\Omega)({\bf r})=K_2{\bf r}$ for all ${\bf r}$, so (\ref{eqq5}) shows that $(\Omega^{-1}\circ\Psi\circ\Omega)({\bf r})=\alpha LK_2{\bf r}$. Since $LK_2$ is an anti-Lorentz matrix, the proof ends.
\end{proof}

We are now able to apply author's previous result~\cite{FFA} to generalize Theorem~\ref{blasi} in two ways to obtain an analog of Theorem~\ref{semrl} for finite fields. That is, bijectivity of the map $\psi$ in Theorem~\ref{blasi} is reduced to a much weaker assumption, and the prime $p$ is replaced by a power of a prime.

\begin{thm}\label{minkowski2}
Let $q\equiv 3\ (\textrm{mod}\ 4)$. Then a map $\psi: \FF_{q}^4\to \FF_q^4$ satisfies the rule
$$({\bf r}_1-{\bf r}_2,{\bf r}_1-{\bf r}_2)=0, {\bf r}_1\neq {\bf r}_2 \Longrightarrow  \big(\psi({\bf r}_1)-\psi({\bf r}_2),\psi({\bf r}_1)-\psi({\bf r}_2)\big)=0, \psi({\bf r}_1)\neq \psi({\bf r}_2)$$
if and only if it is of the form
\begin{equation}\label{eqq6}
\psi({\bf r})=\alpha L {\bf r}^{\tau}+{\bf r}_0\qquad \textrm{or}\qquad \psi({\bf r})=\alpha K {\bf r}^{\tau}+{\bf r}_0,
\end{equation}
where $0\neq \alpha\in\FF_{q}$ and ${\bf r}_0\in\FF_q^4$ are fixed, $\tau$ is an automorphism of $\FF_q$, while $L$ and~$K$ are Lorentz and anti-Lorentz matrices respectively.
\end{thm}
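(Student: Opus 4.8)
The plan is to transfer the statement from the world of invertible $2\times 2$ hermitian matrices to the world of Minkowski space-time via the bijection $\omega=\Omega|_{\FF_q^4\backslash C_0}$, exactly as in the proof of Lemma~\ref{minkowski1}. First I would observe that, by~(\ref{ee5}), the bijection $\Omega$ sends the pair $({\bf r}_1,{\bf r}_2)$ with $({\bf r}_1-{\bf r}_2,{\bf r}_1-{\bf r}_2)=0$ and ${\bf r}_1\neq{\bf r}_2$ precisely to a pair of adjacent hermitian matrices, and that $\Omega$ restricts to the bijection $\omega:\FF_q^4\backslash C_0\to\hgldva$. However, the hypothesis on $\psi$ is not quite enough to define an induced map on $\hgldva$, because nothing a priori prevents $\psi$ from mapping a point outside $C_0$ into $C_0$; so the first real task is to show that this cannot happen, i.e. that $\psi(\FF_q^4\backslash C_0)\subseteq\FF_q^4\backslash C_0$, after possibly composing $\psi$ with a translation. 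For this I would argue that the light-cone structure forces any "good" map to send complements of light cones to complements of light cones: if ${\bf r}$ lies outside $C_0=C_{\bf 0}$, then (by a counting/geometry argument on the finite light cone, or by noting that ${\bf r}$ lies on many lines each of which meets $C_0$ in the expected way) one shows $\psi({\bf r})\notin C_{\psi({\bf 0})}$; replacing $\psi$ by ${\bf r}\mapsto\psi({\bf r})-\psi({\bf 0})$ we may assume $\psi({\bf 0})={\bf 0}$ and then $\psi(\FF_q^4\backslash C_0)\subseteq\FF_q^4\backslash C_0$.

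Once that is in place, set $\Phi:=\omega\circ\psi\circ\omega^{-1}:\hgldva\to\hgldva$. Because of~(\ref{ee5}), the implication hypothesis on $\psi$ translates verbatim into: $\rank(A-B)=1\Rightarrow\rank(\Phi(A)-\Phi(B))=1$ with $\Phi(A)\neq\Phi(B)$, i.e. $\Phi$ preserves adjacency on $\hgldva$ in the sense of~(\ref{ee6}). Now I would invoke the Main Theorem (the $n=2$, $q\geq 7$ case is covered since $q\equiv 3\ (\mathrm{mod}\ 4)$ forces $q\in\{3,7,11,\dots\}$, and we need $q\geq 4$, so only $q=3$ must be handled separately — see below) to conclude that $\Phi(A)=PA^{\sigma}P^{\ast}$ or $\Phi(A)=P(A^{\sigma})^{-1}P^{\ast}$ for some invertible $P$ over $\FFq2$ and some field automorphism $\sigma$ of $\FFq2$. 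At this point the structure theorem has done the heavy lifting, and what remains is bookkeeping.

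The final step is to decompose the two normal forms into the elementary building blocks treated in Lemma~\ref{minkowski1} and read off~(\ref{eqq6}). Writing $\Phi$ as a composition of maps of the shapes $A\mapsto A^{\sigma}$, $A\mapsto PAP^{\ast}$, and $A\mapsto A^{-1}$, and translating each factor through $\Omega$ using parts (iii), (iv)/(v), and (ii) of Lemma~\ref{minkowski1} respectively, one gets that $\omega^{-1}\circ\Phi\circ\omega$ — hence $\psi$ after undoing the initial translation — is of the form ${\bf r}\mapsto\alpha L{\bf r}^{\tau}$ or ${\bf r}\mapsto\alpha K{\bf r}^{\tau}$ up to a translation, with $L$ Lorentz, $K$ anti-Lorentz (the anti-Lorentz case arising precisely when $\det P\,\inv{\det P}$ is a non-square, cf. Lemma~\ref{minkowski1}(v)), $\alpha\in\FF_q\backslash\{0\}$, and $\tau$ an automorphism of $\FF_q$; note the $A\mapsto A^{-1}$ factor only contributes a Lorentz matrix and gets absorbed, since applying $\Upsilon$ twice is the identity and $\Upsilon$ composed with a congruence is again of the congruence-or-inverse shape. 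The converse direction is the routine check that each map in~(\ref{eqq6}) satisfies the required implication, which follows by reversing the above translation (or directly: Lorentz and anti-Lorentz matrices preserve, resp. negate, the quadratic form, a nonzero scalar $\alpha$ scales it, a field automorphism $\tau$ commutes with $=0$, and a translation is harmless). The main obstacle is the first paragraph — proving that $\psi$ cannot collapse the complement of a light cone into a light cone using only the one-directional, not-assumed-bijective hypothesis — together with the nuisance that $q=3$ is excluded from the Main Theorem; for $q=3$ one either checks the (small, finite) graph $\hgldva$ directly, or notes that the excerpt's hypothesis $q\equiv 3\ (\mathrm{mod}\ 4)$ with $q\geq 4$ already forces $q\geq 7$ so that the Main Theorem applies and no special argument is needed.
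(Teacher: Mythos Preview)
You have taken the wrong transfer map and, as a consequence, the wrong structural lemma. Since $\psi$ is defined on \emph{all} of $\FF_q^4$, the relevant bijection is $\Omega:\FF_q^4\to\hdva$, not its restriction $\omega$ to $\FF_q^4\backslash C_0$. Setting $\Psi:=\Omega\circ\psi\circ\Omega^{-1}$ one obtains an adjacency preserver on the whole space $\hdva$, and Lemma~\ref{FFA} (not the Main Theorem) yields directly $\Psi(A)=PA^{\sigma}P^{\ast}+B$. Lemma~\ref{minkowski1}\,(i),(iii),(iv),(v) then translates this into~(\ref{eqq6}) with no further work, no restriction $q\geq 4$, and---crucially---no inverse map ever appearing. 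This is the paper's two-line proof.

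Your detour through $\hgldva$ creates three genuine problems. First, the claim that after a translation $\psi$ maps $\FF_q^4\backslash C_0$ into itself is not justified: the hypothesis gives information only about pairs $({\bf r}_1,{\bf r}_2)$ with $({\bf r}_1-{\bf r}_2,{\bf r}_1-{\bf r}_2)=0$, so for ${\bf r}\notin C_0$ nothing is said directly about $\psi({\bf r})-\psi({\bf 0})$; your ``counting/geometry'' sketch is not an argument. Second, and more seriously, the sentence ``the $A\mapsto A^{-1}$ factor only contributes a Lorentz matrix and gets absorbed'' is wrong. By Lemma~\ref{minkowski1}\,(ii) the inverse corresponds to ${\bf r}\mapsto L{\bf r}/({\bf r},{\bf r})$, so the second alternative of the Main Theorem translates to one of the non-affine maps in~(\ref{eqq7}), \emph{not} to a map of the form~(\ref{eqq6}). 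You would still have to rule out this case by exploiting that $\psi$ extends to $C_0$---a step you neither state nor prove. Third, $q=3$ is excluded from the Main Theorem but is allowed in Theorem~\ref{minkowski2}; your suggestion that ``$q\equiv 3\ (\textrm{mod}\ 4)$ with $q\geq 4$'' disposes of it is mistaken, since no assumption $q\geq 4$ is present here. All three issues vanish once you use $\Omega$ and Lemma~\ref{FFA}; in effect you have reproduced the strategy for Theorem~\ref{minkowski3} where it is Theorem~\ref{minkowski2} that is at stake.
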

\begin{proof}
It is straightforward to check that maps~(\ref{eqq6}) obey the rule in the Theorem~\ref{minkowski2}. From Lemmas~\ref{FFA} and~\ref{minkowski1} we deduce that these are the only such maps.
\end{proof}

If we apply Main Theorem, we deduce a result that is similar to the one in Remark~\ref{remark} for real Minkowski space-time, but it is in a sense much stronger, since equivalence (\ref{ee4}) is essentially replaced by an implication.

\begin{thm}\label{minkowski3}
Let $q\equiv 3\ (\textrm{mod}\ 4)$ and $q\neq 3$.
Then a map $\varphi: \FF_{q}^4\backslash C_0\to \FF_q^4\backslash C_0$ satisfies the rule
$$({\bf r}_1-{\bf r}_2,{\bf r}_1-{\bf r}_2)=0, {\bf r}_1\neq {\bf r}_2 \Longrightarrow  \big(\varphi({\bf r}_1)-\varphi({\bf r}_2),\varphi({\bf r}_1)-\varphi({\bf r}_2)\big)=0, \varphi({\bf r}_1)\neq \varphi({\bf r}_2)$$
if and only if it fits one of the following four forms:
\begin{equation}\label{eqq7}
\varphi({\bf r})=\alpha L {\bf r}^{\tau},\quad
\varphi({\bf r})=\alpha K {\bf r}^{\tau},\quad
\varphi({\bf r})= \frac{\alpha L {\bf r}^{\tau}}{({\bf r}^{\tau},{\bf r}^{\tau})},\quad
\varphi({\bf r})=\frac{\alpha K {\bf r}^{\tau}}{({\bf r}^{\tau},{\bf r}^{\tau})}.
\end{equation}
where $0\neq \alpha\in\FF_{q}$ is fixed, $\tau$ is an automorphism of $\FF_q$, while $L$ and $K$ are Lorentz and anti-Lorentz matrices respectively.
\end{thm}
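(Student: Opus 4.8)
The plan is to transport the whole problem through the bijection $\omega=\Omega|_{\FF_q^4\setminus C_0}:\FF_q^4\setminus C_0\to\hgldva$, apply the Main Theorem, and then read off the conclusion from Lemma~\ref{minkowski1}. For the nontrivial implication I would start from a map $\varphi$ obeying the stated rule and put $\Phi:=\omega\circ\varphi\circ\omega^{-1}$, which is automatically a self-map of $\hgldva$; no bijectivity of $\varphi$ is assumed and none is needed. Writing $A=\omega({\bf r}_1)$ and $B=\omega({\bf r}_2)$ and invoking~(\ref{ee5}), the hypothesis ``$({\bf r}_1-{\bf r}_2,{\bf r}_1-{\bf r}_2)=0$, ${\bf r}_1\neq{\bf r}_2$'' is precisely ``$\rank(A-B)=1$'', while the conclusion ``$\big(\varphi({\bf r}_1)-\varphi({\bf r}_2),\varphi({\bf r}_1)-\varphi({\bf r}_2)\big)=0$, $\varphi({\bf r}_1)\neq\varphi({\bf r}_2)$'' is precisely ``$\rank(\Phi(A)-\Phi(B))=1$'' (the inequality being automatic once the rank equals $1$). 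Hence $\Phi$ preserves adjacency. Since $q\equiv 3\ (\textrm{mod}\ 4)$ together with $q\neq 3$ forces $q\geq 7\geq 4$, the Main Theorem applies with $n=2$ and yields $\Phi(A)=PA^{\sigma}P^{\ast}$ or $\Phi(A)=P(A^{\sigma})^{-1}P^{\ast}$ for some invertible $P$ over $\FFq2$ and some field automorphism $\sigma$.

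The next step is to factor each of these two forms into the elementary maps covered by Lemma~\ref{minkowski1} and compose the resulting transformations of $\FF_q^4\setminus C_0$. With $\Psi_1(A)=A^{\sigma}$, $\Psi_2(B)=PBP^{\ast}$ and $\Psi_3(C)=C^{-1}$ one has $\Phi=\Psi_2\circ\Psi_1$ in the first case and $\Phi=\Psi_2\circ\Psi_3\circ\Psi_1$ in the second, so $\varphi$ is the corresponding composition of the maps $\omega^{-1}\circ\Psi_i\circ\omega$. By Lemma~\ref{minkowski1}(iii), $\omega^{-1}\circ\Psi_1\circ\omega$ is ${\bf r}\mapsto L_1{\bf r}^{\tau}$ with $L_1$ Lorentz and $\tau$ an automorphism of $\FF_q$; by Lemma~\ref{minkowski1}(ii), $\omega^{-1}\circ\Psi_3\circ\omega$ is ${\bf r}\mapsto L_3{\bf r}/({\bf r},{\bf r})$ with $L_3$ Lorentz; and by Lemma~\ref{minkowski1}(iv)--(v), $\omega^{-1}\circ\Psi_2\circ\omega$ is ${\bf r}\mapsto\alpha L_2{\bf r}$ with $L_2$ Lorentz when $\det P\,\inv{\det P}$ is a square in $\FF_q$, and ${\bf r}\mapsto\alpha K_2{\bf r}$ with $K_2$ anti-Lorentz otherwise. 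Composing these --- and using that a product of Lorentz matrices is Lorentz, that a product of a Lorentz and an anti-Lorentz matrix (in either order) is anti-Lorentz, and that $\tau$ fixes $\FF_q$ pointwise, so that $N{\bf r}^{\tau}=(N{\bf r})^{\tau}$ for any $4\times 4$ matrix $N$ over $\FF_q$ and $({\bf r}^{\tau},{\bf r}^{\tau})=({\bf r},{\bf r})^{\tau}$ --- I expect to land in exactly one of the four shapes listed in~(\ref{eqq7}), the choice being governed by the two independent binary alternatives ``$\Psi_3$ present or not'' and ``$\det P\,\inv{\det P}$ a square or not''.

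For the converse I would run the dictionary backwards. Each of the four forms in~(\ref{eqq7}) is a composition of maps of the kinds appearing in Lemma~\ref{minkowski1} --- a scalar $\alpha\in\FF_q$ being absorbed via surjectivity of the norm $\N$ (pick $a\in\FFq2$ with $a\inv{a}=\alpha$ and use $\alpha A=(aI)A(aI)^{\ast}$) --- so $\omega\circ\varphi\circ\omega^{-1}$ becomes a map $B\mapsto QB^{\sigma}Q^{\ast}$ or $B\mapsto Q(B^{\sigma})^{-1}Q^{\ast}$ on $\hgldva$. Both kinds send rank-one differences to rank-one differences (for the inverse one uses $\rank\big((B^{\sigma})^{-1}-(C^{\sigma})^{-1}\big)=\rank\big((C^{\sigma})^{-1}(C^{\sigma}-B^{\sigma})(B^{\sigma})^{-1}\big)=\rank(B-C)$) and are genuine self-maps of $\hgldva$, so translating back through~(\ref{ee5}) gives the rule required of $\varphi$. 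Well-definedness of $\varphi$ as a map $\FF_q^4\setminus C_0\to\FF_q^4\setminus C_0$ is the observation that Lorentz and anti-Lorentz matrices, the automorphism $\tau$, and the inversion ${\bf r}\mapsto{\bf r}/({\bf r},{\bf r})$ each preserve the set $\{{\bf r}\in\FF_q^4:({\bf r},{\bf r})\neq 0\}$.

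The argument is conceptually short --- it is essentially the Main Theorem followed by the already-established correspondence of Lemma~\ref{minkowski1} --- so the only point that really needs care is the bookkeeping in the composition step: tracking how $\tau$ passes through the Lorentz and anti-Lorentz factors and through the denominator $({\bf r},{\bf r})$, and checking that the two parity choices reproduce exactly the four listed forms with nothing spurious or missing.
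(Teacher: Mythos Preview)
Your approach is exactly the paper's: conjugate by $\omega$, invoke the Main Theorem with $n=2$, and then read off the four forms via Lemma~\ref{minkowski1}. One slip worth fixing: $\tau$ is merely an automorphism of $\FF_q$ and does \emph{not} fix $\FF_q$ pointwise unless $q$ is prime, so the identity $N{\bf r}^{\tau}=(N{\bf r})^{\tau}$ you invoke is false in general; happily you never actually need it, since in your factorization $\Psi_1$ is applied first and the subsequent Lorentz/anti-Lorentz factors simply act linearly on ${\bf r}^{\tau}$, giving $\alpha L_2(L_1{\bf r}^{\tau})=\alpha(L_2L_1){\bf r}^{\tau}$ etc.\ without any commutation of $\tau$ past a matrix.
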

\begin{proof}
It is straightforward to check that maps~(\ref{eqq7}) obey the rule in Theorem~\ref{minkowski3}. Main Theorem and Lemma~\ref{minkowski1} show that these are the only such maps.
\end{proof}

\begin{remark}
For prime fields, the identity map is the only automorphism $\tau$ of $\FF_p$.
\end{remark}

\begin{remark}
Formulations of Theorem~\ref{minkowski2}, Theorem~\ref{minkowski3}, and Lemma~\ref{minkowski1} (iv), (v) are still valid if the expression `$0\neq\alpha\in\FF_q$' is replaced by `$\alpha\in\FF_q$ is a nonzero square' or by `$\alpha\in\FF_q$ is non-square'. This is true, since $-1$ is non-square and so precisely one of the scalars $\alpha,-\alpha$ is a square. Since $-I$ is a Lorentz matrix, we can replace the pair $(\alpha, L)$ by $(-\alpha, -L)$ and the pair $(\alpha, K)$ by $(-\alpha, -K)$.
\end{remark}

\begin{remark}
In contrast to Theorem~\ref{minkowski2}, there is an assumption in Theorem~\ref{semrl} on existence of ${\bf r}, {\bf r}^{\prime}$ such that $\psi({\bf r})-\psi({\bf r}^{\prime})\notin C_0$. In fact, for arbitrary infinite field~$\KK$ there always exist degenerate maps $\psi: \KK^4\to\KK^4$ and  $\psi: \KK^4\backslash C_0\to\KK^4\backslash C_0$ that obey the rule from Theorem~\ref{minkowski2}/Theorem~\ref{minkowski3} and which satisfy
$\psi({\bf r})-\psi({\bf r}^{\prime})\in C_0$
for all ${\bf r}, {\bf r}^{\prime}$. An example of a such map is $\psi({\bf r}):=\big(-g({\bf r}),1,0,g({\bf r})\big)^{\tr}$, where $g:\KK^4\to\KK$ is an injection. Recall that for Theorem~\ref{minkowski3},
the non-existence of such maps in finite field case was essentially proven in Lemma~3.8 of the first paper~\cite{prvi_del}, where spectral graph theory and chromatic number of a graph were used.
\end{remark}

\end{document}